\definecolor{qqqqff}{rgb}{0,0,1}
\definecolor{qqccqq}{rgb}{0,0.8,0}
\definecolor{ffqqqq}{rgb}{1,0,0}
\newtheorem{theorem}{Theorem}[section]
\newtheorem*{Theorem}{Theorem}
\newtheorem{corollary}[theorem]{Corollary}
\newtheorem{proposition}[theorem]{Proposition}
\theoremstyle{definition}
\newtheorem{definition}[theorem]{Definition}
\theoremstyle{remark}
\newtheorem{remark}[theorem]{Remark}
\newtheorem{lemma}[theorem]{Lemma}
\newtheorem{example}[theorem]{Example}
\newcommand{\id}{\mathrm{id}}
\newcommand{\cat}{\mathbf}
\newcommand{\VR}[1]{\textnormal{VR}(#1)}
\newcommand{\dVR}[1]{\overrightarrow{\textnormal{VR}}(#1)}
\newcommand{\limit}{\textnormal{lim}}
\newcommand{\colimit}{\textnormal{colim}}
\newcommand{\simp}[1]{|\Delta^{#1}|}
\newcommand{\topint}[1]{I^{#1}}
\definecolor{Red}{rgb}{1.0, 0.0, 0.0}
\definecolor{NavyBlue}{rgb}{0.0, 0.0, 0.5}
\title{The directed Vietoris-Rips complex and homotopy and singular homology groups of finite digraphs}
\author{Nikola Mili\'cevi\'c, Nicholas A. Scoville}
\tikzset{%
    symbol/.style={%
        draw=none,
        every to/.append style={%
            edge node={node [sloped, allow upside down, auto=false]{$#1$}}}
    }
}
\subjclass[2020]{Primary 55P10; Secondary 55N10, 54A05, 18F60}
\keywords{Homotopy, pseudotopological spaces, digraphs, directed Vietoris-Rips complex}
\begin{document}


\begin{abstract}
    We prove analogues of classical results for higher homotopy groups and singular homology groups of pseudotopological spaces. Pseudotopological spaces are a generalization of (\v{C}ech) closure spaces which are in turn a generalization of topological spaces.  Pseudotopological spaces also include graphs and directed graphs as full subcategories. Thus they are a bridge that connects classical algebraic topology with the more applied side of topology. More specifically, we show the existence of a long exact sequence for homotopy groups of pairs of pseudotopological spaces and that a weak homotopy equivalence induces isomorphisms for homology groups. Our main result is the construction of weak homotopy equivalences between the geometric realizations of directed Vietoris-Rips complexes and their underlying directed graphs. This implies that singular homology groups of finite directed graphs can be efficiently calculated from finite combinatorial structures, despite their associated chain groups being infinite dimensional. This work is similar to the work of McCord for finite topological spaces but in the context of pseudotopological spaces. Our results also give a novel approach for studying (higher) homotopy groups of discrete mathematical structures such as (directed) graphs or digital images. 
\end{abstract}

\maketitle
\tableofcontents

\section{Introduction}

Singular homology groups of topological spaces are difficult to compute due to the chain groups having infinite dimension, even for spaces of finite cardinality. Computing higher homotopy groups of topological spaces presents an even greater challenge. An elegant classical solution is to replace the space with a CW complex (or a simplicial complex) that is (weak) homotopy equivalent to the original space. However, for finite topological spaces, their non-Hausdorff nature generally disallows having continuous maps to a Hausdorff space like a CW complex that are not constant. Thus, finite spaces are generally not homotopy equivalent to a CW complex. Therefore, to effectively compute these algebraic invariants of finite spaces the best hope is to find a weak homotopy equivalence to a CW complex, as weak homotopy equivalences will also induce isomorphisms on singular homology. This is precisely what McCord did with the following celebrated result:

\begin{Theorem}{\cite[Theorem 1]{mccord1966singular}}
\begin{enumerate}
    \item For each finite topological space $X$ there exists a finite simplicial complex $K$ and a weak homotopy equivalence $f\colon|K|\to X$.
    \item For each finite simplicial complex $K$ there exists a finite topological space $X$ and a weak homotopy equivalence $f\colon|K|\to X$.
\end{enumerate}
\end{Theorem}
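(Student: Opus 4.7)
The plan is to build a pair of functorial constructions between finite topological spaces and finite simplicial complexes and verify that the natural comparison maps are weak homotopy equivalences by a local-to-global argument. After replacing a finite space by its Kolmogorov quotient (which is a homotopy equivalence), I may assume it is $T_0$, and then identify finite $T_0$ spaces with finite posets via the Alexandrov topology whose open sets are the up-sets. In this correspondence each point $x$ has a minimal open neighborhood $U_x = \{y : y \geq x\}$, the collection $\{U_x\}$ forms a basis closed under finite intersections, and $U_x$ is contractible via the straight-line type deformation to $x$ (continuous because $x$ is a minimum of $U_x$).

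For part (1), given a finite poset $X$, I would let $K = \mathcal{K}(X)$ be the order complex whose $n$-simplices are chains $x_0 < x_1 < \cdots < x_n$ in $X$, and define $f\colon |K| \to X$ by sending a point in the open interior of the simplex $[x_0,\dots,x_n]$ to its minimum $x_0$. For part (2), given a finite simplicial complex $K$, I would let $X = \mathcal{X}(K)$ be the face poset ordered by inclusion with the Alexandrov topology and define $f\colon |K| \to X$ by sending a point in the open interior of a simplex $\sigma$ to $\sigma$. In both constructions continuity reduces to checking the preimage of each basic open: for part (1), $f^{-1}(U_x)$ is the open star of the vertex $x$ in $|K|$, and for part (2), $f^{-1}(U_\sigma)$ is the open star of the barycenter of $\sigma$. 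In either case the preimage is a contractible open subset of a polyhedron.

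The conclusion that $f$ is a weak homotopy equivalence follows from McCord's local criterion: if $\mathcal{U}$ is an open cover of the target that is a basis closed under finite intersection (a \emph{basis-like} cover) and if $f^{-1}(U) \to U$ is a weak homotopy equivalence for every $U \in \mathcal{U}$, then so is $f$. Applied to the basis $\{U_x\}_{x \in X}$, both $U_x$ and $f^{-1}(U_x)$ are contractible, so the hypothesis holds and the result follows. The main obstacle, and the technical heart of the argument, is the basis-like cover theorem itself: one must show that any continuous map from a sphere into the codomain, and any null-homotopy of its image, can be lifted up to homotopy through the cover. This requires subdividing the sphere finely enough that each simplex maps into some $U \in \mathcal{U}$, using the contractibility of the preimages to piece together a lift, and exploiting closure under intersection to make the local choices compatible on overlaps; the finiteness of $X$ (equivalently, of the cover) and the combinatorial nature of the Alexandrov topology make this gluing tractable.
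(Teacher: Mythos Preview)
This theorem is not proved in the paper; it is cited from McCord's 1966 paper as a classical motivating result in the introduction, so there is no proof in the paper to compare your proposal against. Your outline is correct and is essentially McCord's original argument: reduce to $T_0$ via the Kolmogorov quotient, identify with a finite poset, take the order complex (respectively the face poset), define $f$ by sending an interior point to the minimum vertex (respectively to the carrying simplex), and verify the local criterion over the basis of minimal open sets $\{U_x\}$.

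It may still be worth noting how the paper's proof of its own analogue (\cref{theorem:vr_weak_homotopy_equivalence}) differs from your outline, since that is the closest comparison available. The paper does not use McCord's basis-like cover criterion directly; instead it proves a finite-cover version (\cref{corollary:weak_hom_eq_on_inter_cover}) in $\cat{PsTop}$ and then runs an induction on $|X|$: the interior cover $\{U_x\}$ consists of strictly smaller digraphs (unless some $U_x=X$, in which case both sides are contractible), the preimages $f_X^{-1}(U_x)$ deformation retract onto $|\dVR{U_x}|$, and the inductive hypothesis closes the loop. The map $f_X$ is also different from McCord's: rather than sending an interior point of $|\sigma|$ to the minimum vertex, it sends a point to its nearest vertex via a Voronoi partition of $|\sigma|$, with a tie-breaking rule on cell boundaries. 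This is forced by the digraph setting, where there need not be a global total order on vertices compatible with all simplices, so a ``send to the minimum'' map is not available. Your direct approach works cleanly for posets precisely because such a minimum always exists; the paper's detour through Voronoi cells and induction is the price paid for dropping transitivity.
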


An immediate consequence of this result was the surprising realization that finite topological spaces have more interesting singular homology and higher homotopy groups than one might imagine; homology and homotopy groups of any finite simplicial complex can be realized as singular homology groups and homotopy groups, respectively, of a finite topological space. 

Finite topological spaces are equivalent to a preorder on a set \cite{Alexandroff:1937}, i.e., a set with a transitive and reflexive relation $(X,E)$. However, in applied topology data typically does not come with the transitivity assumption; we only have a set with a reflexive relation $(X,E)$, i.e., a simple directed graph, or digraph for short, with a loop at each vertex \cite{LuptonScoville2022Digital,LuptonOpreaScoville2022Homotopy,ChihScull2021Graphs,HobbsMartinoScull2023Spider}.

The category of topological spaces, $\cat{Top}$, and the category of digraphs, $\cat{DiGph}$, are both full subcategories of \v{C}ech closure spaces, $\cat{Cl}$ \cite{dikranjan2013categorical}. Closure spaces are a generalization of topological spaces where one does not require the closure operator to be idempotent \cite{cech1966topologicalspaces}. This suggests we can develop a common homotopy and homology theory for seemingly very distinct objects: the classical topological spaces and the objects studied in applied topology \cite{bubenik2024homotopy,rieser2021cechclosurespaces,bubenik2021eilenberg}. However, just like in the case of finite topological spaces, the singular homology groups of finite digraphs are difficult to compute on their own due to the infinite dimensional chain groups. By leveraging analogues of classical long exact sequences like the Mayer-Vietoris sequence, some examples of singular homology groups of undirected graphs can be computed \cite{milicevic2023singular}. Using a covering space theory of closure spaces, Rieser computed examples of fundamental groups of finite closure spaces \cite{rieser2021cechclosurespaces}. However, so far there is no analogue of McCord's result for computing singular homology and homotopy groups of arbitrary finite digraphs more effectively.

In this paper, we show that McCord's result (part (1)) can be extended to finite digraphs, which are equivalent to finite closure spaces, i.e., we can drop the transitivity requirement for $(X,E)$. Part (2) of McCord's theorem is immediately true in $\cat{Cl}$, as every topological space is a closure space which is why we are only interested in part (1). In particular, in part (2), given an abstract simplicial complex, McCord showed that its geometric realization is weak homotopy equivalent to its underlying set of faces with the partial order given by inclusion. 

Given a finite digraph $(X,E)$, we consider its directed Vietoris-Rips complex $\dVR{X,E}$ (also known as the directed clique complex or directed flag complex)  which is the collection of all subsets $\sigma\subset X$ on which there is an ordering $\sigma=\{v_0,\dots v_n\}$ such that $v_iEv_j$ whenever $v_i<v_j$. Note that $E$ potentially having bidirected edges implies that there might be more than one such ordering of vertices in $\sigma$. If $E$ is symmetric on all of $X$, this definition recovers the usual Vietoris-Rips complex $\VR{X,E}$ (also called the clique complex or the flag complex) on $(X,E)$. McCord showed that a finite  $T_0$ topological space (a partial order $(X,E)$) is weakly homotopy equivalent to the geometric realization of its order complex, which consists of all chains in the preorder $(X,E)$ (part (1) of the theorem). As the directed Vietoris-Rips complex is an obvious generalization of the order complex (when transitivity of $E$ is not certain), the directed Vietoris-Rips complex is the natural candidate for us to consider for our main result. In other words, our main result is the following:

\begin{Theorem}[{\cref{theorem:vr_weak_homotopy_equivalence}}]
    For each finite digraph $(X,E)$ there exists a finite abstract simplicial complex $\dVR{X,E}$ and a weak homotopy equivalence $f_X\colon |\dVR{X,E}|\to (X,E)$ which is a morphism in $\cat{Cl}$. 
\end{Theorem}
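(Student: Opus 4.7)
The plan is to imitate McCord's classical argument for finite posets, with the directed Vietoris-Rips complex playing the role of the order complex and digraphs playing the role of preorders. The proof has three ingredients: (i) a concrete definition of $f_X$, (ii) verification that $f_X$ is a morphism of closure spaces, and (iii) a closure-space analogue of McCord's open-cover criterion for weak homotopy equivalence.

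For each simplex $\sigma\in\dVR{X,E}$, fix an ordering $v_0,\dots,v_n$ of its vertices with $v_iEv_j$ whenever $i<j$; such an ordering exists by definition of $\dVR{X,E}$, and one can make the choice canonical by first fixing an auxiliary linear order on $X$ and then selecting the ordering lexicographically, for instance. Define $f_X\colon|\dVR{X,E}|\to X$ by sending the open interior of $\sigma$ to the sink $v_n$. To check continuity in $\cat{Cl}$, I would verify $f_X(\overline A)\subseteq c_E(f_X(A))$ for all $A\subseteq|\dVR{X,E}|$: if $p\in\overline A$ lies in the open interior of a simplex $\tau$, then $p$ lies in the closure of some face $\tau'\prec\tau$ meeting $A$, and the chosen sink of $\tau'$ is a vertex of $\tau$ hence either equal to or $E$-related to the sink of $\tau$, which gives the desired containment.

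For the weak-equivalence step I would apply a closure-space version of McCord's covering theorem: if the minimal neighborhoods $V_x=\{x\}\cup\{y:yEx\}$ form a basis and each preimage $f_X^{-1}(V_x)$ is weakly contractible, then $f_X$ is a weak homotopy equivalence. The passage from topological spaces to $\cat{Cl}$ should rest on the long exact sequence for pairs of pseudotopological spaces and the homology-invariance-of-weak-equivalences theorem stated earlier in the paper, together with the standard skeleton-by-skeleton induction comparing $\pi_n$ of the source and target via relative Hurewicz-type arguments.

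The main obstacle is establishing contractibility of $f_X^{-1}(V_x)$. In McCord's poset setting, transitivity ensures that whenever the sink $v_n$ of $\sigma$ satisfies $v_nEx$, the set $\sigma\cup\{x\}$ is again a simplex with $x$ as its new sink, and $f_X^{-1}(V_x)$ is literally a cone with apex $x$. Without transitivity, one only knows $v_nEx$ and not $vEx$ for the other $v\in\sigma$, so $\sigma\cup\{x\}$ may fail to lie in $\dVR{X,E}$ and the cone structure can collapse. Overcoming this is the heart of the proof, and I would expect the resolution to take one of three forms: (a) refining $f_X$ so that it records more than the sink of each simplex, perhaps by a barycentric rule exploiting the pseudotopological structure available on $X$; (b) replacing the basis $\{V_x\}$ by a coarser cover on which the preimage argument becomes tractable; or (c) constructing an explicit deformation retraction of $f_X^{-1}(V_x)$ onto the vertex $x$ by paths that pass between simplices of $|\dVR{X,E}|$ rather than lying inside a single simplex. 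Executing the correct variant, and verifying it inductively, is where the essential work lies.
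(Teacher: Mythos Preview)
Your proposal correctly identifies the central obstacle---that without transitivity, $f_X^{-1}(V_x)$ need not be a cone---but does not resolve it, so there is a genuine gap. The paper's resolution is essentially your option (a), combined with an induction that avoids needing contractibility of the preimages directly.

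Concretely, the paper replaces the last-vertex map by a \emph{Voronoi-cell} map: on each $|\sigma|$, a point is sent to the vertex of $\sigma$ it is closest to (with ties broken by the chosen ordering on $\sigma$). The payoff is that for any $A\subset X$, the preimage $f_X^{-1}(A)$ deformation retracts onto the subcomplex $|\dVR{A,E}|$; this is a straight-line retraction along join coordinates, using that $\dVR{A,E}$ is a full subcomplex. Now the proof proceeds by induction on $|X|$ rather than by showing each $f_X^{-1}(U_x)$ is contractible: if some $U_x=X$ both sides are contractible; otherwise each $U_x$, and every finite intersection of the $U_x$'s, has strictly smaller cardinality, so by induction $f_A\colon|\dVR{A,E}|\to (A,E)$ is a weak equivalence for each such $A$. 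Composing with the deformation retraction gives that $f_X^{-1}(A)\to A$ is a weak equivalence, and then the pseudotopological cover criterion (Corollary on weak equivalences over a finite interior cover) applies.

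Two smaller points about your sketch. First, your continuity argument is garbled: if $p$ lies in the open interior of $\tau$ and is a limit of points of $A$, those nearby points lie in open interiors of \emph{cofaces} $\tau'\supseteq\tau$, not faces $\tau'\prec\tau$; and then one must check $\text{sink}(\tau')\,E\,\text{sink}(\tau)$, which can fail since the orderings on $\tau$ and $\tau'$ need not be compatible in a general digraph. The paper handles continuity for the Voronoi map by a careful case analysis at Voronoi boundaries and across skeleta. Second, the covering criterion the paper invokes is proved in $\cat{PsTop}$ via mapping-cylinder and excisive-triad arguments (not Hurewicz-type arguments), precisely because $I$ is not exponentiable in $\cat{Cl}$.
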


This result allows us to more effectively compute higher homotopy groups of finite digraphs (or equivalently finite closure spaces) by using combinatorial structures such as simplicial complexes. 

In order to prove our results, we extended classical results from algebraic topology from $\cat{Top}$ to the larger setting of $\cat{Cl}$. This was not entirely trivial, since $\cat{Top}$ is a reflective but not coreflective subcategory of $\cat{Cl}$ and thus colimits of diagrams of topological spaces are not ``well behaved" in $\cat{Cl}$. Furthermore, for the sake of generality and a very technical reason (\cref{remark:no_mapping_cylinder_in_cl}), we develop all this theory for an even larger category of spaces, the category of pseudotopological spaces, $\cat{PsTop}$ \cite{preuss2002foundationsoftopology}. However, finite pseudotopological spaces coincide with finite closure spaces, for which our main result is intended. As an example we prove the following important result in $\cat{PsTop}$.

\begin{Theorem}[\cref{theorem:n_equivalences}]
    Let $f\colon(X;X_1,X_2)\to (Y;Y_1,Y_2)$ be a map of excisive triads in $\cat{PsTop}$ such that $f\colon (X_i,X_1\cap X_2)\to (Y_i,Y_1\cap Y_2)$ is an $n$-equivalence for $i=1,2$. Then $f\colon (X,X_i)\to (Y,Y_i)$ is an $n$-equivalence for $i=1,2$.
\end{Theorem}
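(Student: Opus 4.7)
The plan is to adapt the classical argument (cf.\ May's \emph{A Concise Course in Algebraic Topology}, Ch.~10) to the pseudotopological setting, combining excision for excisive triads in $\cat{PsTop}$ (which I assume has been established earlier in the paper alongside the long exact sequence of a pair) with a five-lemma argument applied to the long exact sequences of pairs.

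First I would use excision for excisive triads to identify the relative homotopy groups
\begin{equation*}
\pi_k(X_2, X_1 \cap X_2) \xrightarrow{\cong} \pi_k(X, X_1), \qquad \pi_k(Y_2, Y_1 \cap Y_2) \xrightarrow{\cong} \pi_k(Y, Y_1),
\end{equation*}
together with the analogous isomorphisms obtained by exchanging the roles of $X_1$ and $X_2$. Since $f$ is a map of triads these isomorphisms assemble into a commutative square with $f_*$ on the relative homotopy groups, so the hypothesis that $f\colon (X_2, X_1\cap X_2) \to (Y_2, Y_1\cap Y_2)$ is an $n$-equivalence translates directly into $f_*\colon \pi_k(X, X_1) \to \pi_k(Y, Y_1)$ being bijective for $k < n$ and surjective for $k = n$; symmetrically for the pair $(X, X_2) \to (Y, Y_2)$.

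To upgrade this to the full $n$-equivalence of pairs, which also requires control on the absolute homotopy of $X_i$ and $X$, I would feed the above information into the commutative ladder of long exact sequences of the pair
\begin{equation*}
\begin{tikzcd}
\pi_k(X_1) \ar[r]\ar[d] & \pi_k(X) \ar[r]\ar[d] & \pi_k(X, X_1) \ar[r]\ar[d] & \pi_{k-1}(X_1) \ar[d] \\
\pi_k(Y_1) \ar[r] & \pi_k(Y) \ar[r] & \pi_k(Y, Y_1) \ar[r] & \pi_{k-1}(Y_1)
\end{tikzcd}
\end{equation*}
and invoke the $n$-equivalence version of the five-lemma, once $X_1 \to Y_1$ is known to be an $n$-equivalence on absolute homotopy. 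The latter is extracted by a preliminary five-lemma applied to $(X_1, X_1\cap X_2) \to (Y_1, Y_1 \cap Y_2)$, which in turn requires $X_1 \cap X_2 \to Y_1 \cap Y_2$ to be an $n$-equivalence; this final input is obtained by combining the two halves of the hypothesis (the restriction of $f$ to the intersection is common to both pairs, and a symmetric five-lemma across $i=1,2$ forces the required behavior on $\pi_k(X_1\cap X_2)\to\pi_k(Y_1\cap Y_2)$).

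The main obstacle I anticipate is the excision step. In $\cat{Top}$, excision for homotopy groups classically requires cofibration or CW hypotheses; its analogue in $\cat{PsTop}$ must be set up so that the inclusion $(X_2, X_1 \cap X_2) \hookrightarrow (X, X_1)$ induces isomorphisms on all relative homotopy groups under the paper's notion of excisive triad, likely via an interior-cover condition $X = \mathrm{int}(X_1) \cup \mathrm{int}(X_2)$ adapted to pseudotopological closures (this is presumably the very reason the paper develops the theory in $\cat{PsTop}$ rather than in $\cat{Cl}$, as hinted at by the earlier remark on the failure of mapping cylinders in $\cat{Cl}$). Once this excision result is in place, the rest of the argument is a careful but essentially formal diagram chase.
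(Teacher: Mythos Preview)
Your proposal rests on an excision isomorphism for relative homotopy groups,
\[
\pi_k(X_2,X_1\cap X_2)\xrightarrow{\cong}\pi_k(X,X_1),
\]
valid for all $k$. No such result is available: unlike homology, homotopy groups do not satisfy excision in this generality, even for open triads in $\cat{Top}$ (Blakers--Massey only yields isomorphisms in a connectivity range). The paper does not establish any such excision theorem prior to this result; in fact \cref{theorem:n_equivalences} \emph{is} the excision-type statement in this setting, so invoking excision here is circular. The downstream five-lemma argument is then moot, and the side step of extracting an absolute $n$-equivalence $X_1\cap X_2\to Y_1\cap Y_2$ from the hypothesis is likewise not available: the hypothesis controls only the \emph{relative} groups $\pi_k(X_i,X_1\cap X_2)$, and the five lemma does not let you read off the outer maps from the middle one.

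The paper's proof (following Hatcher, Proposition~4K.1) sidesteps excision entirely. It first replaces $Y$ by the mapping cylinder so that $f$ becomes an inclusion, then reformulates the $n$-equivalence condition as a concrete extension property for maps of cubes (\cref{lemma:hatcher_lemma}). Given a map $I^n\times\{0\}\cup\partial_+I^n\times I\to Y$ with the appropriate boundary behavior, one uses compactness and the interior cover $\{Y_1,Y_2\}$ (via \cref{lemma:finite_cover_whose_image_is_contained_in_covering_system}) to subdivide $I^n$ into subcubes each mapped into $Y_1$ or $Y_2$, and then extends over $K\times I$ subcube by subcube, inductively applying the assumed extension property for the smaller pairs $(X_i,X_1\cap X_2)\hookrightarrow(Y_i,Y_1\cap Y_2)$. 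This is a direct, hands-on argument that does not presuppose any global excision statement.
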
 
 
Furthermore, we prove the following theorem which allows us to compute singular (co)homology groups of finite digraphs more effectively.

\begin{Theorem}[\cref{theorem:weak_homotopy_iso_on_homology}]
    A weak homotopy equivalence $f\colon X\to Y$ of pseudotopological spaces induces isomorphisms
    $f_* \colon H_n(X;G)\to H_n(Y;G)$ and $f^*\colon H^n(Y;G)\to H^n(X;G)$ for all $n$ and all coefficient groups $G$.
\end{Theorem}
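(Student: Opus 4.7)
The strategy is to adapt the classical mapping-cylinder argument. First, I would replace $f\colon X\to Y$ by the inclusion $i\colon X\hookrightarrow M_f$ of $X$ into the mapping cylinder $M_f$, which exists in $\cat{PsTop}$ (this being exactly the reason the theory is developed in $\cat{PsTop}$ rather than $\cat{Cl}$, cf.\ \cref{remark:no_mapping_cylinder_in_cl}). Since $M_f$ deformation retracts onto $Y$, it suffices to prove that $i$ induces an isomorphism $H_n(X;G)\to H_n(M_f;G)$ for all $n$ and $G$, and dually for cohomology.

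Because $i$ is the composite of $f$ with a deformation retraction, $i$ is itself a weak homotopy equivalence. The long exact sequence of homotopy groups of a pair in $\cat{PsTop}$ (available earlier in the paper) then forces $\pi_n(M_f,X,x_0)=0$ for every $n\geq 1$ and every basepoint $x_0\in X$; that is, the pair $(M_f,X)$ is $\infty$-connected. The goal reduces to showing $H_n(M_f,X;G)=0$ for all $n$ and $G$, since the long exact sequence of the pair in singular homology (derived from the short exact sequence of singular chain complexes with $G$-coefficients exactly as in the classical case) then delivers the isomorphism $i_*\colon H_n(X;G)\to H_n(M_f;G)\cong H_n(Y;G)$.

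For the vanishing $H_n(M_f,X;G)=0$, I would adapt the classical compression argument: given a relative singular cycle represented by $\sigma\colon\Delta^n\to M_f$ with $\sigma|_{\partial\Delta^n}\subset X$, use $\infty$-connectedness of $(M_f,X)$ to homotope $\sigma$ rel $\partial\Delta^n$ into $X$; this homotopy is itself an $(n+1)$-chain bounding $\sigma$ modulo chains in $X$. The underlying extension/compression lemma for $n$-connected pairs should follow from \cref{theorem:n_equivalences} (applied to the trivial excisive triad $(M_f;M_f,X)\to(\mathrm{pt};\mathrm{pt},\mathrm{pt})$), or directly from the definitions of relative homotopy groups in $\cat{PsTop}$. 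For cohomology, the analogous short exact sequence of cochain complexes and the corresponding vanishing $H^n(M_f,X;G)=0$ yield the cohomology isomorphism; alternatively, once integral homology is handled, one observes that a chain map between free $\mathbb{Z}$-chain complexes inducing isomorphisms on homology is a chain homotopy equivalence, from which every $H_n(-;G)$ and $H^n(-;G)$ follows.

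The principal obstacle is the compression step. In $\cat{Top}$ it is routine via CW/simplicial approximation, but in $\cat{PsTop}$ such tools are scarcer, so one must reason directly at the level of singular simplices. The key technical input is that $n$-connectedness of a pseudotopological pair permits rel-boundary homotopies of maps $\Delta^n\to M_f$ into the subspace, together with the fact that barycentric subdivision of singular chains behaves well enough in $\cat{PsTop}$ to chop an arbitrary relative cycle into small pieces to which compression applies. Both of these rest on good categorical properties of the cylinder $-\times I$ in $\cat{PsTop}$, which is precisely where working in the pseudotopological category (rather than $\cat{Cl}$) earns its keep.
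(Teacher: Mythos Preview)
Your overall architecture matches the paper's: pass to the mapping cylinder, use the long exact homotopy sequence to see that $(M_f,X)$ is $n$-connected for all $n$, and reduce via the long exact homology sequence to showing $H_k(M_f,X;G)=0$. The cohomology conclusion via the universal coefficient theorem (your ``chain homotopy equivalence of free complexes'' remark) is also what the paper does.

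The gap is in your compression step. A relative cycle in $C_k(M_f,X;G)$ is \emph{not} a single singular simplex $\sigma\colon\Delta^k\to M_f$ with $\sigma|_{\partial\Delta^k}\subset X$; it is a chain $\alpha=\sum_j n_j\sigma_j$ whose boundary, \emph{after cancellations}, lies in $C_{k-1}(X)$. Individual faces of the $\sigma_j$ need not land in $X$ at all---they may pair off with faces of other $\sigma_{j'}$ that also miss $X$. Barycentric subdivision does not cure this: subdividing preserves the same pattern of internal cancellations and produces no simplex whose entire boundary lies in $X$. So the ``homotope each $\sigma$ rel $\partial\Delta^k$ into $X$'' argument never gets off the ground for a general relative cycle.

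The paper's fix (following Hatcher's Proposition 4.21) is to assemble the $\sigma_j$ into a finite $\Delta$-complex $K$ by identifying faces on which the $\sigma_j$ agree, so that the $\sigma_j$ glue to a single map $\sigma\colon K\to M_f$. The faces surviving in $\partial\alpha$ form a subcomplex $L\subset K$ with $\sigma(L)\subset X$, and $\alpha=\sigma_\#(\tilde\alpha)$ for a relative cycle $\tilde\alpha$ in $(K,L)$. Now one compresses the \emph{map of pairs} $\sigma\colon(K,L)\to(M_f,X)$ rel $L$ into $X$, using that $(M_f,X)$ is $k$-connected together with the HEP for the finite CW pair $(K,L)$ in $\cat{PsTop}$ (which in turn relies on \cref{prop:finite_cw_complexes_are_topological} and \cref{corollary:CW_pairs_HEP}). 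Finally, your appeal to \cref{theorem:n_equivalences} via the triad $(M_f;M_f,X)\to(\mathrm{pt};\mathrm{pt},\mathrm{pt})$ is circular: the hypothesis there is exactly the vanishing of $\pi_*(M_f,X)$ you already have, and the conclusion gives nothing new.
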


In summary, this manuscript further develops singular homology and higher homotopy groups in $\cat{PsTop}$. Our results apply to the important full subcategory digraphs, $\cat{DiGph}$, which is useful for applications. These homology and homotopy groups are a natural extension of homology and homotopy groups of topological spaces. Our main result allows us to compute them by replacing the digraph with its directed Vietoris-Rips complex. 

This manuscript is structured as follows. In \cref{section:background} we recall the necessary background material on pseudotopological and closure spaces. In \cref{section:homotopy_and_homology} we define higher homotopy groups and singular homology groups for pseudotopological spaces. We show these groups satisfy analogues of classical results from algebraic topology. For the sake of readability, some of the proofs in \cref{section:homotopy_and_homology} are relegated to the Appendix. In \cref{section:main_results} we prove our main result. In \cref{section:discussion} we discuss the meta implications of this work. 

\subsection*{Related works}
It should be noted that some of the same results on the weak homotopy type of pseudotopological spaces have been arrived at independently, using very different methods than those employed in this paper, by Jonathan Trevi\~{n}o-Marroqu\'{\i}n \cite{treviñomarroquín2024graphs}. To our knowledge Demaria and Bogin \cite{demaria1984homotopy} were the first to extend classical algebraic topology to closure spaces. Many different singular homology theories and homotopy theories of closure spaces were developed in \cite{bubenik2021eilenberg,bubenik2024homotopy,milicevic2023singular}. Model structures on pseudotopological spaces were developed by Rieser in \cite{rieser2022cofibration} and Ebel and Kapulkin in \cite{ebel2023synthetic}. The study of CW complexes and their properties in $\cat{Cl}$ was done by Bubenik \cite{bubenik2023cw}. Riser studied a fundamental group and covering spaces of closure spaces in \cite{rieser2021cechclosurespaces}. Babson, Barcelo, de Longueville, Kramer, Laubenbacher, and Weaver developed a discrete homotopy theory of simple graphs, which they called $A$-theory \cite{barcelo2001foundations,barcelo2005perspectives,babson2006homotopy}. Barcelo, Capraro and White \cite{barcelo2014discrete} defined a cubical singular homology theory for simple graphs. 
Dochtermann \cite{dochtermann2009hom} developed a homotopy theory for simple graphs, called $\times$-homotopy theory. A special case of graphs, in which the underlying set is a finite subset of the lattice $\mathbb{Z}^n$, is studied in digital topology and digital homotopy theories \cite{Boxer:1999,Lupton:2022}. Grigor'yan, Lin, Muranov, and Yau \cite{grigor2014homotopy} developed a homotopy theory for directed graphs. Almost all of these different homotopy and homology theories for graphs were shown to fall under a common uniform framework of varying products and interval objects in the category of closure spaces \cite{bubenik2024homotopy}. Vietoris introduced what is now known as the Vietoris-Rips complex (also known as clique or flag complex) of an undirected graph \cite{vietoris1927hoheren}. To our knowledge the first definition of a directed clique complex for a directed graph was given in \cite{masulli2016topology}. However, this definition assumes the digraph does not have any bidirected edges. Our definition in this work is for digraphs potentially having some bidirected edges and is thus the most general.

\section{Background}
\label{section:background}
Here we give some of the necessary definitions and background results for topological, closure, and pseudotopological spaces that will be used throughout this paper. The proofs of some lemmas are relegated to the Appendix. We refer to \cite[Chapter 1]{beattie2002convergencestructures} and \cite{preuss2002foundationsoftopology} for background on pseudotopological spaces. We refer to \cite{cech1966topologicalspaces} for background on closure spaces. 

\subsection{Pseudotopological spaces}

\begin{definition}\cite[Chapter 11]{hrbacekjech1999settheory} Let $X$ be a non-empty set.  A \textbf{filter} on $X$ is a collection $F$ of non-empty subsets of $X$ satisfying 
\begin{enumerate}[left=0pt]
    \item $X\in F$ and $\varnothing\not \in F$.
    \item If $A,B\in F$, then $A\cap B\in F$.
    \item If $A\in F$ with $A\subset B \subset X$, then $B\in F$. 
\end{enumerate}

For any set $X$, let $\mathbb{F}(X)$ denote the set of filters on $X$.

\end{definition}

\begin{definition}
 A \textbf{convergence space} is a pair $(X,\Lambda)$ where $\Lambda \subset \mathbb{F}(X)\times X$ is a relation between $X$ and the set of filters on $X$ satisfying
 \begin{enumerate}[left=0pt]
     \item If $(\lambda, x)\in \Lambda$ and $\lambda\subset \lambda'$, then $(\lambda',x)\in \Lambda$ (upward closure).
     \item $\dot{x}\in \Lambda$ where $\dot{x}$ denotes the filter generated by $x$.
 \end{enumerate}
 Any such relation $\Lambda$ is called a \textbf{convergence structure} or \textbf{convergence} on $X$. If $(\lambda,x)\in \Lambda$, we write $\lambda \to x$ and  say that $\lambda$ \textbf{converges} to $x$ or that $x$ is a \textbf{limit point} of $\lambda$. Note that there is no requirement that when $\lambda\to x$,  every element of $\lambda$ must contains $x$. If in addition we have that 
 \begin{enumerate}[left=0pt]
     \item[3)] Whenever $(\lambda,x),(\lambda',x)\in \Lambda$, we have $(\lambda\cap \lambda',x)\in \Lambda$,
 \end{enumerate}
 we say that $(X,\Lambda)$ is a \textbf{limit space}. 
\end{definition}

\begin{remark}
    Some authors (for example in \cite{beattie2002convergencestructures}) mean limit space when they write convergence space.
\end{remark}

\begin{definition}
Let $(X,\Lambda_X)$ and $(Y,\Lambda_Y)$ be convergence spaces, and let $f\colon X\to Y$ be a map.  Then $f$ is \textbf{continuous} if whenever $\lambda \to x\in \Lambda_X$, we have $f(\lambda)\to f(x)\in \Lambda_Y$.  Here $f(\lambda)$ is the filter in $Y$ generated by the collection $\{f(U)\subset Y : U\in \lambda\}$ of subsets of $Y$.  
\end{definition}

Given two convergence structures $\Lambda$ and $\Lambda'$ on $X$, we say that $\Lambda$ is \textbf{finer} than $\Lambda'$ and $\Lambda'$ is \textbf{coarser} than $\Lambda$ if for all $x\in X$, $\{\lambda\,|\, (\lambda,x)\in \Lambda\}\subset \{\lambda'\,|\, (\lambda',x)\in \Lambda'\}$. In other words for all $x$, $\Lambda$ has fewer convergent filters to $x$ than $\Lambda'$. Clearly $\Lambda$ is finer than $\Lambda'$ if and only if the identity function $\id_X\colon(X,\Lambda)\to (X,\Lambda')$ is continuous. 

\begin{definition}
    \label{def:subspace_convergence}
    Let $(X,\Lambda)$ be a convergence space and let $A\subset X$. The \textbf{subspace convergence} on $A$ is the convergence structure $\Lambda_A$ on $A$ defined by $\lambda\to x$ in $\Lambda_A$ if and only if the filter generated by $\lambda$ on $X$ converges to $x$ in $\Lambda$. We say that $(A,\Lambda_A)$ is a \textbf{subspace} of $(X,\Lambda)$. The inclusion map $(A,\Lambda_A)\hookrightarrow (X,\Lambda)$ is clearly continuous.
\end{definition}

\begin{example}
    The finest convergence structure on $X$ is called the \textbf{discrete convergence} where for every $x\in X$ we have $\dot{x}\to x$ as the only filter that converges to $x$. If $(X,\Lambda)$ is such that $\Lambda$ is the discrete convergence, then any function $f\colon(X,\Lambda)\to (Y,\Lambda_Y)$ is continuous for any convergence space $(Y,\Lambda_Y)$. That is because for all $x\in X$ we have that $\dot{x}$ is the only filter converging to $x$ and thus $f(\dot{x})=\dot{f(x)}\to x$ in $\Lambda_Y$.
\end{example}

\begin{example}
    The coarsest convergence structure on $X$ is called the \textbf{indiscrete convergence} where for every $x\in X$, we declare that every $\lambda\in 2^{\mathbb{F}(X)}$ converges to $x$, i.e., all the filters on $X$ converge to $x$. If $(X,\Lambda)$ is such that $\Lambda$ is the indiscrete convergence, then any function $f\colon (Y,\Lambda_Y)\to (X,\Lambda)$ is continuous for any convergence space $(Y,\Lambda_Y)$. That is because if $\lambda\to y$ in $\Lambda_Y$ we have $f(\lambda)\to f(y)$ by definition as $\Lambda$ is indiscrete.
\end{example}

\begin{definition}
    \label{def:neighborhood_filter}
    Let $(X,\Lambda)$ be a convergence space. For all $x\in X$, the filter 
    \[\mathcal{U}(x)=\bigcap\{\lambda\,|\, \lambda\to x\}\]
    is called the \textbf{neighborhood filter} of $x$ and its elements are called the \textbf{neighborhoods} of $x$. A set $A\subset X$ is called \textbf{open} if it is a neighborhood of each of its points. For each $A\subset X$ the set 
    \[a(A)=\{x\in X\,|\, \exists \lambda\to x, A\in \lambda\}\]
    is called the \textbf{adherence} of $A$. Call $A\subset X$ \textbf{closed} if $a(A)=A$.
\end{definition}

\begin{definition}
An \textbf{ultrafilter} $\gamma$ on a set $X$ is a filter such that for all $A\subset X$, either $A\in \gamma$ or $X\setminus A\in \gamma$.  A convergence space $(X,\Lambda)$ is called a \textbf{pseudotopological space}, (also called Choquet spaces \cite{beattie2002convergencestructures}), if $\lambda\to x$ whenever every ultrafilter $\gamma$ finer than $\lambda$ also converges to $x$. We denote the category of pseudotopological spaces and continuous maps between pseudotopological spaces by $\cat{PsTop}$.
\end{definition}

The following is straightforward.

\begin{lemma}
\label{lemma:cont_maps_in_pstop_are_cont_in_cl}
    Let $f\colon (X,\Lambda_X)\to (Y,\Lambda_Y)$ be a continuous map of pseudotopological spaces. Then $f(a_X(A))\subset a_Y(f(A))$ for all $A\subset X$, where $a_X$ and $a_Y$ are the adherences in $(X,\Lambda_X)$ and $(Y,\Lambda_Y)$, respectively.
\end{lemma}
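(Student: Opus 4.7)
The plan is to chase definitions: unpacking what membership in the adherence means and then pushing everything forward along $f$ using continuity. Concretely, I would start by picking an element $y \in f(a_X(A))$, write it as $y = f(x)$ for some $x \in a_X(A)$, and aim to exhibit a filter on $Y$ that witnesses $f(x) \in a_Y(f(A))$ according to \cref{def:neighborhood_filter}.

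Next, by the definition of the adherence, $x \in a_X(A)$ yields a filter $\lambda$ on $X$ with $\lambda \to x$ in $\Lambda_X$ and $A \in \lambda$. The natural candidate for a witnessing filter on $Y$ is $f(\lambda)$, the filter on $Y$ generated by $\{f(U) : U \in \lambda\}$ (as in the definition of continuous map of convergence spaces). Continuity of $f$ gives us immediately that $f(\lambda) \to f(x)$ in $\Lambda_Y$, so only the membership $f(A) \in f(\lambda)$ remains to be verified.

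For this last step I would note that $f(A)$ is in the generating collection of $f(\lambda)$ by construction (since $A \in \lambda$), hence it lies in the filter $f(\lambda)$. Combining this with $f(\lambda) \to f(x)$ satisfies the defining condition for $f(x) \in a_Y(f(A))$, which completes the argument and yields $f(a_X(A)) \subset a_Y(f(A))$.

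I do not anticipate any real obstacles here; the statement is essentially a formal consequence of the definitions of continuity for convergence spaces and of adherence as given in \cref{def:neighborhood_filter}, and the pseudotopological hypothesis is not even needed. The only small point to be careful about is the bookkeeping that $f(A)$ belongs to the filter $f(\lambda)$ and not merely to a generating set, but this is immediate since $f(A)$ itself is one of the generators and filters are closed under supersets.
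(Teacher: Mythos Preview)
Your proposal is correct and is exactly the straightforward definition chase the paper has in mind; in fact the paper does not spell out a proof at all, merely remarking ``The following is straightforward'' before the lemma. Your observation that the pseudotopological hypothesis is unnecessary is also correct.
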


\begin{example}
\label{example:discrete_pstop}
    Let $(X,\Lambda)$ be such that $\Lambda$ is the discrete convergence. Then $(X,\Lambda)$ is a pseudotopological space as $\dot{x}$ is an ultrafilter for all $x\in X$ and $\dot{x}$ is the only filter converging to $x$ by definition. Thus, we say $(X,\Lambda)$ is a \textbf{discrete pseudotopological space}. Furthermore, for any pseudotopological space $(X',\Lambda')$ any function $f\colon(X,\Lambda)\to (X',\Lambda')$ is continuous in $\cat{PsTop}$.
\end{example}

\begin{example}
\label{example:indiscrete_pstop}
    Let $(X,\Lambda)$ be such that $\Lambda$ is the indiscrete convergence. Then $(X,\Lambda)$ is a pseudotopological space for all $x\in X$ every filter on $X$ (and thus ultrafilter) converges to $x$ by definition. Thus, we say $(X,\Lambda)$ is an \textbf{indiscrete pseudotopological space}. Furthermore, for any pseudotopological space $(X',\Lambda')$ any function $f\colon(X',\Lambda')\to (X,\Lambda)$ is continuous in $\cat{PsTop}$.
\end{example}

\subsection{Closure spaces and topological spaces}\label{sec: closure spaces}

A (\v{C}ech) \textbf{closure space} is a pair $(X,c)$ where $X$ is a set and $c\colon 2^X\to 2^X$ is a function such that 
\begin{enumerate}[left=0pt]
    \item $c(\varnothing)=\varnothing$.
    \item $A\subset c(A)$ for all $A\subset X$.
    \item $c(A\cup B)=c(A)\cup c(B)$.
\end{enumerate}

If $c(A)=c(c(A))$ for all $A\subset X$ we say $(X,c)$ is a \textbf{topological space}.  This alternative definition of topological spaces is equivalent to the standard one, in terms of declaring what subsets are called open, \cite{cech1966topologicalspaces}. A \textbf{continuous map} of closure spaces $f\colon (X,c)\to (Y,d)$ is a function $f\colon X\to Y$ such that $f(c(A))\subset d(f(A))$. We denote the category of closure spaces and continuous maps of closure spaces by $\cat{Cl}$. We denote the full subcategory of closure spaces, i.e, the topological spaces, by $\cat{Top}$. 

\begin{definition}
    \label{def:subspace_closure}
    Let $(X,c)$ be a closure space and let $A\subset X$.
    The \textbf{subspace closure operation} on $A$ is $c_A(B):=A\cap c(B)$ for all $B\subset A$. We say that $(A,c_A)$ is a \textbf{subspace} of $(X,c)$. The inclusion map $(A,c_A)\hookrightarrow (X,c)$ is clearly continuous.
\end{definition}

\begin{theorem}{\cite[Theorem 17.A.13]{cech1966topologicalspaces}}
\label{theorem:range_restriction_cont}
    A map of closure spaces $f\colon X\to Y$ is continuous iff the range restriction $f\colon X\to f(X)$ is continuous.
\end{theorem}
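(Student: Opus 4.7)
The plan is to unravel both directions directly from the definitions of continuity and subspace closure, since this is really a tautology once the definitions are pushed through. Throughout, let $(X,c)$ and $(Y,d)$ be the ambient closure spaces, and let $(f(X), d_{f(X)})$ be the subspace of $(Y,d)$ on the image, so that by \cref{def:subspace_closure} we have $d_{f(X)}(B) = f(X) \cap d(B)$ for every $B \subset f(X)$.

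For the forward implication, assume $f\colon (X,c)\to (Y,d)$ is continuous, i.e.\ $f(c(A)) \subset d(f(A))$ for every $A \subset X$. Since clearly $f(c(A)) \subset f(X)$, combining these two inclusions gives
\[
f(c(A)) \subset f(X) \cap d(f(A)) = d_{f(X)}(f(A)),
\]
which is exactly the continuity condition for the range restriction $f\colon (X,c) \to (f(X), d_{f(X)})$.

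For the reverse implication, assume the range restriction is continuous, so $f(c(A)) \subset d_{f(X)}(f(A)) = f(X) \cap d(f(A))$ for every $A \subset X$. Dropping the intersection with $f(X)$ yields $f(c(A)) \subset d(f(A))$, which is continuity of $f\colon (X,c) \to (Y,d)$. Alternatively, one may observe that the inclusion $(f(X), d_{f(X)}) \hookrightarrow (Y,d)$ is continuous (noted in \cref{def:subspace_closure}), and so the full map $f$ factors as a composition of two continuous closure maps. No step here presents any real obstacle; the only thing to take care of is not to confuse the ambient closure $d$ on $Y$ with the subspace closure $d_{f(X)}$, and the key observation is simply that intersecting with $f(X)$ on the right-hand side is redundant whenever the left-hand side already lies in $f(X)$.
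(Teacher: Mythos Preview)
Your proof is correct. Both directions are handled cleanly: the forward direction uses the trivial containment $f(c(A)) \subset f(X)$ to pass from $d(f(A))$ to the subspace closure $d_{f(X)}(f(A)) = f(X) \cap d(f(A))$, and the reverse direction simply drops the intersection (or equivalently composes with the continuous inclusion $f(X) \hookrightarrow Y$).

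Note that the paper does not actually supply a proof of this statement; it is merely cited from \v{C}ech's book. So there is no ``paper's own proof'' to compare against. Your argument is the standard one and is entirely adequate.
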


\begin{definition}
    \label{def:interior_cover}
    Let $(X,c)$ be a closure space. The \textbf{interior operator} of $(X,c)$ is a function $i\colon 2^X\to 2^X$ defined by $i(A)=X\setminus c(X\setminus A)$. A set $U$ is a \textbf{neighbhorhood} of $A$ if $A\subset i(U)$. We say a collection $\mathcal{U}$ of subsets of $X$ is an \textbf{interior cover} of $(X,c)$ if $X=\bigcup_{U\in \mathcal{U}}i(U)$. 
\end{definition}

\begin{theorem}{\cite[Theorem 16.A.4 and Corollary 16.A.5]{cech1966topologicalspaces}}
\label{theorem:continuity_in_terms_of_nbhds}
   A function $f\colon (X,c)\to (Y,d)$ between closure spaces is continuous at $x \in X$ if and only if for every
    neighborhood $V \subset Y$ of $f(x)$, the inverse image $f^{-1}(V)$ is a neighborhood of $x$.   Equivalently, $f$ is continuous at $x$ if and only if for each neighborhood $V \subset Y$ of $f(x)$, there exists a neighborhood $U \subset X$ of $x$ such that $f(U) \subset V$. 
 \end{theorem}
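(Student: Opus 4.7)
The plan is to recast everything in terms of the interior operator $i(A) = X\setminus c(X\setminus A)$ from \cref{def:interior_cover} and use De Morgan style complements to shuttle between the closure-based definition of continuity and the neighborhood formulation. I will read ``$f$ is continuous at $x$'' as the pointwise version of the global definition given in the paper: whenever $x\in c(A)$, one has $f(x)\in d(f(A))$. Global continuity is equivalent to continuity at every point, so this reading is consistent with the paper's terminology.

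For the forward direction of the first equivalence, assume $f$ is continuous at $x$ and let $V$ be a neighborhood of $f(x)$, i.e., $f(x)\in i(V)$, equivalently $f(x)\notin d(Y\setminus V)$. The goal is $x\in i(f^{-1}(V))$, i.e., $x\notin c(f^{-1}(Y\setminus V))$. If instead $x\in c(f^{-1}(Y\setminus V))$, applying continuity at $x$ to $A = f^{-1}(Y\setminus V)$ yields $f(x)\in d(f(A))\subset d(Y\setminus V)$, a contradiction. For the reverse direction, suppose every preimage of a neighborhood of $f(x)$ is a neighborhood of $x$, and take $A\subset X$ with $x\in c(A)$. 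If $f(x)\notin d(f(A))$, then $V := Y\setminus f(A)$ is a neighborhood of $f(x)$, so by hypothesis $f^{-1}(V)$ is a neighborhood of $x$. But $A\subset f^{-1}(f(A))$ forces $f^{-1}(V)\subset X\setminus A$, so $X\setminus A$ is a neighborhood of $x$, i.e., $x\notin c(A)$, contradicting the assumption.

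The second reformulation is essentially formal: if $f^{-1}(V)$ is itself a neighborhood of $x$ then one may simply take $U = f^{-1}(V)$ and note $f(U)\subset V$. Conversely, given a neighborhood $U$ of $x$ with $f(U)\subset V$, the inclusion $U\subset f^{-1}(V)$ together with monotonicity of $i$ (immediate from monotonicity of $c$, which follows from the third axiom of a closure operator applied to $A\subset B$ via $c(B) = c(A\cup B) = c(A)\cup c(B)$) gives $x\in i(U)\subset i(f^{-1}(V))$, so $f^{-1}(V)$ is a neighborhood of $x$. I do not expect a significant obstacle; the main thing to watch is keeping the pointwise formulation of continuity straight against the global definition the paper writes down, and keeping careful track of which operator ($c$, $i$, or complement) one is using at each step.
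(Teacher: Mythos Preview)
The paper does not actually supply a proof of this theorem; it merely states the result with a citation to \v{C}ech's book, so there is no ``paper's own proof'' to compare your attempt against. Your argument is correct: the forward and reverse directions of the first equivalence are handled cleanly via contrapositive and the identity $X\setminus f^{-1}(V)=f^{-1}(Y\setminus V)$, and the second equivalence is indeed just monotonicity of the interior operator. The spirit of your approach---translating between closure and interior via complements---is the same as what the paper does use in the appendix for the related global statement (\cref{lemma:continuity_characterization_in_terms_of_interiors}), so your method is entirely in line with the paper's style.
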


We consider each closure space $(X,c)$ as a pseudotopological space by declaring that $\mathcal{U}(x)\to x$ for all $x\in X$, where $\mathcal{U}(x)$ is the filter consisting of all neighborhoods of $x$ in $(X,c)$. An alternative definition of a closure space is that of a pseudotopological space for which the neighborhood filter (\cref{def:neighborhood_filter}) at each point always converges to the point. In this case one sets the closure operator to be the adherence operator of the pseudotopological space. We will present this alternative definition as a proposition here. For a proof, see \cite[Chapter 2]{preuss2002foundationsoftopology}.

\begin{proposition}
\label{prop:alternative_closure_space_definition}
\label{prop:alternative_char_of_closure_spaces}
    A pseudotopological space $(X,\Lambda)$ is a closure space if $\mathcal{U}(x)\to x$ (\cref{def:neighborhood_filter}) for all $x\in X$. 
\end{proposition}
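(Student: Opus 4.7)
The plan is to construct, from the pseudotopology $\Lambda$, a \v{C}ech closure operator $c$ on $X$ whose induced pseudotopology (via the embedding $\cat{Cl}\hookrightarrow\cat{PsTop}$ described above) recovers $\Lambda$. The natural candidate is the adherence operator from \cref{def:neighborhood_filter}, i.e., $c(A)\defeq a(A) = \{x\in X\mid \exists\,\lambda\to x \text{ with } A\in\lambda\}$. The axioms $c(\emptyset)=\emptyset$ and $A\subset c(A)$ are immediate---the latter from $\dot{x}\to x$. For $c(A\cup B)=c(A)\cup c(B)$, the nontrivial inclusion $\subset$ follows from a filter-extension argument: given $\lambda\to x$ with $A\cup B\in\lambda$, either every $F\in\lambda$ meets $A$, in which case $\lambda\cup\{A\}$ has the finite intersection property and generates a filter $\lambda'\supset\lambda$ containing $A$, with $\lambda'\to x$ by upward closure (giving $x\in c(A)$); or some $F\in\lambda$ is disjoint from $A$, forcing $X\setminus A\in\lambda$ and hence $B=(X\setminus A)\cap(A\cup B)\in\lambda$ (giving $x\in c(B)$).

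Next, I would identify the neighborhood filter of $x$ in $(X,c)$ with $\mathcal{U}(x)$. A set $U$ satisfies $x\in i(U)=X\setminus c(X\setminus U)$ precisely when no filter converging to $x$ contains $X\setminus U$. If $U$ lies in every such filter, this clearly holds; conversely, if some $\lambda\to x$ omits $U$, the same filter-extension maneuver yields a finer $\lambda'\to x$ containing $X\setminus U$, establishing the contrapositive. Thus the neighborhood filter of $x$ in $(X,c)$ equals $\mathcal{U}(x)$.

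Finally, I would verify that $\Lambda$ agrees with the pseudotopology $\Lambda_c$ induced by $c$. By the previous step, $\lambda\to x$ in $\Lambda_c$ iff $\mathcal{U}(x)\subset\lambda$. The inclusion $\Lambda\subset\Lambda_c$ is immediate from the definition of $\mathcal{U}(x)$ as the intersection of filters converging to $x$ in $\Lambda$. The reverse inclusion is precisely where the hypothesis $\mathcal{U}(x)\to x$ enters: upward closure applied to this convergence yields $\lambda\to x$ whenever $\mathcal{U}(x)\subset\lambda$. The main subtlety throughout is the filter-extension argument in the first two steps, though once set up it is routine; the hypothesis $\mathcal{U}(x)\to x$ is essential only in the last step, ensuring the adherence-induced pseudotopology does not end up strictly finer than $\Lambda$.
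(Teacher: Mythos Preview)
Your argument is correct and self-contained. The paper does not give its own proof of this proposition: the text immediately preceding it says ``For a proof, see \cite[Chapter 2]{preuss2002foundationsoftopology},'' after sketching that one should take the adherence operator as the closure. Your proof carries out precisely that sketch---verifying that the adherence is a \v{C}ech closure (the paper also cites this as \cite[Lemma 1.3.3]{beattie2002convergencestructures}), showing that the neighborhood filter of $(X,a)$ at $x$ coincides with $\mathcal{U}(x)$, and then using the hypothesis $\mathcal{U}(x)\to x$ together with upward closure to identify $\Lambda$ with the pseudotopology induced by $(X,a)$.

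One minor slip: in the additivity step you write $B=(X\setminus A)\cap(A\cup B)$, but this intersection is $B\setminus A$, not $B$; since filters are upward closed this still yields $B\in\lambda$, so the conclusion is unaffected.
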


\begin{proposition}\cite[Remark 2.3.1.2]{preuss2002foundationsoftopology}
$\cat{Cl}$ is a full subcategory of $\cat{PsTop}$.
\end{proposition}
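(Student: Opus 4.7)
The plan is to exhibit a faithful functor $\cat{Cl} \to \cat{PsTop}$ that is the identity on underlying sets and functions, and then verify it is full on morphisms. Faithfulness is automatic since both categories are concrete and the functor will leave underlying functions untouched, so the real work is (a) specifying the pseudotopological structure attached to a closure space and (b) checking that closure-continuity and convergence-continuity coincide.

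For objects, given a closure space $(X,c)$ I would define a convergence structure $\Lambda$ on $X$ by declaring $\lambda \to x$ if and only if $\lambda \supset \mathcal{U}(x)$, where $\mathcal{U}(x)$ is the neighborhood filter of $x$ in $(X,c)$ in the sense of \cref{def:interior_cover}. Upward closure is immediate, and since every neighborhood of $x$ contains $x$ we have $\mathcal{U}(x) \subset \dot x$, whence $\dot x \to x$. To confirm this is a pseudotopological space, the nontrivial direction of the ultrafilter condition is: if every ultrafilter $\gamma \supset \lambda$ satisfies $\gamma \to x$, then $\lambda \supset \mathcal{U}(x)$. I would argue by contradiction. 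If some neighborhood $U$ of $x$ is not in $\lambda$, then by upward closure of $\lambda$ every $A \in \lambda$ meets $X \setminus U$; thus $\lambda \cup \{X \setminus U\}$ has the finite intersection property and extends to an ultrafilter $\gamma$. By hypothesis $\gamma \supset \mathcal{U}(x) \ni U$, so both $U$ and $X \setminus U$ lie in $\gamma$, a contradiction. Finally, since $\mathcal{U}(x) \to x$ by construction, \cref{prop:alternative_char_of_closure_spaces} confirms that $(X,\Lambda)$ is indeed a closure space, and one checks that the associated adherence recovers $c$.

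For fullness, I would show that for closure spaces $(X,c)$ and $(Y,d)$, a function $f \colon X \to Y$ is continuous in $\cat{Cl}$ if and only if it is continuous between the associated pseudotopological spaces. The key tool is \cref{theorem:continuity_in_terms_of_nbhds}. For the forward direction, if $f$ is closure-continuous and $\lambda \to x$, i.e.\ $\lambda \supset \mathcal{U}(x)$, then for any neighborhood $V$ of $f(x)$ the set $f^{-1}(V)$ is a neighborhood of $x$, hence lies in $\lambda$; upward closure of $f(\lambda)$ combined with $f(f^{-1}(V)) \subset V$ gives $V \in f(\lambda)$, so $f(\lambda) \supset \mathcal{U}(f(x))$ and $f(\lambda) \to f(x)$. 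For the reverse direction, continuity in $\cat{PsTop}$ applied to $\mathcal{U}(x) \to x$ yields $f(\mathcal{U}(x)) \supset \mathcal{U}(f(x))$; unpacking this says that for every neighborhood $V$ of $f(x)$ there is a neighborhood $U$ of $x$ with $f(U) \subset V$, which is exactly the neighborhood criterion of \cref{theorem:continuity_in_terms_of_nbhds}, hence $f$ is closure-continuous.

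The only step that requires any real care is the ultrafilter extension argument verifying the pseudotopological condition for the convergence structure attached to a closure space; the rest of the proof is a direct unpacking of definitions guided by \cref{theorem:continuity_in_terms_of_nbhds} and \cref{prop:alternative_char_of_closure_spaces}.
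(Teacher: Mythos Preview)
Your argument is correct and complete. The paper itself does not give a proof of this proposition at all: it simply states the result with a citation to \cite[Remark 2.3.1.2]{preuss2002foundationsoftopology}, having already referred the reader to Preuss for the proof of \cref{prop:alternative_char_of_closure_spaces} as well. So there is nothing to compare against---you have supplied the details the paper deliberately omits.

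Your approach is the standard one and matches what one finds in Preuss: associate to $(X,c)$ the convergence structure in which $\lambda \to x$ iff $\lambda \supset \mathcal{U}(x)$, verify the pseudotopology axiom via an ultrafilter extension argument, and then use the neighborhood characterization of closure-continuity (\cref{theorem:continuity_in_terms_of_nbhds}) to identify morphisms. The ultrafilter step is handled cleanly, and the two directions of the morphism equivalence are exactly right. One minor point you could make explicit for completeness: the assignment is injective on objects, i.e., distinct closure operators $c \neq c'$ on $X$ give rise to distinct neighborhood filters at some point and hence distinct convergence structures, so that $\cat{Cl}$ really embeds as a subcategory rather than merely mapping to $\cat{PsTop}$. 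This follows from the fact that $c$ is recovered from the adherence of the associated convergence structure, which you allude to but do not spell out.
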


\begin{example}
\label{example:discrete_cl}
   Let $(X,\Lambda)$ be a discrete pseudotopological space. Then for all $x\in X$, $\mathcal{U}(x)=\{\dot{x}\}\to x$ and thus the neighborhood filter converges to $x$. Thus $(X,\Lambda)$ is a closure space. One also computes that $a(A)=A$ for all $A\subset X$, where $a$ is the adherence for $\Lambda$ or the underlying closure operator. Therefore, the closure operator $a$ is idempotent and thus $(X,a)$ is a topological space. Hence, we say $(X,\Lambda)$ or equivalently $(X,a)$ is a \textbf{discrete closure (topological) space}. Additionally, for any closure (resp. topological) space $(Y,d)$ any function $f: (X,a)\to (Y,d)$ is continuous in $\cat{Cl}$ (resp. $\cat{Top}$). 
\end{example}

\begin{example}
\label{example:indiscrete_cl}
    Let $(X,\Lambda)$ be an indiscrete pseudotopological space. Then for all $x\in X$, $\mathcal{U}(x)\to x$ by definition of the indiscrete convergence structure and thus the neighborhood filter converges to $x$. Thus $(X,\Lambda)$ is a closure space. One also computes that $a(\varnothing)=\varnothing$, $a(A)=X$ for all $A\subset X$, where $a$ is the adherence for $\Lambda$ or the underlying closure operator. Therefore, the closure operator $a$ is idempotent and thus $(X,a)$ is a topological space. Thus we say $(X,\Lambda)$ or equivalently $(X,a)$ is an \textbf{indiscrete closure (topological) space}. Additionally, for any closure (resp. topological) space $(Y,d)$ any function $f:(Y,d)\to (X,a)$ is continuous in $\cat{Cl}$ (resp. in $\cat{Top}$).
\end{example}

The definition of interior covers of closure spaces motivates the following definition of interior covers of pseudotopological spaces.

\begin{definition}
    \label{def:interior_cover_pseudotopological}
    Let $(X,\Lambda)$ be a pseudotopological space. For $A\subset X$ a \textbf{neighborhood} of $A$ is a subset $U\subset X$ such that $U$ is a neighborhood of $A$ in the closure space $(X,a)$. That is $A\subset i(U)$ where $i(U)=X\setminus a(X\setminus U)$. An \textbf{interior cover} of $(X,\Lambda)$ is a cover of $X$ that happens to be an interior cover of $(X,a)$.
\end{definition}

We have the following version of the pasting lemma.

\begin{proposition}\cite[Proposition A.2.2]{preuss2002foundationsoftopology}[Pasting Lemma]
\label{prop:pasting_lemma}
    Let $(X,c)$ be a closure space and let $(X',\Lambda)$ be a limit space. If $A_1$ and $A_2$ are closed subsets of $(X,c)$ such that $X=A_1\cup A_2$, then a map $f\colon (X,c)\to (X',\Lambda)$ is continuous iff the restrictions $f|_{A_i}$ are continuous for $i=1,2$.
\end{proposition}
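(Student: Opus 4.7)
The plan is to handle the two implications separately. The forward direction --- that continuity of $f$ implies continuity of each restriction $f|_{A_i}$ --- is immediate, since the subspace inclusions $\iota_i\colon(A_i,c_{A_i})\hookrightarrow(X,c)$ are continuous maps in $\cat{Cl}$ (hence in $\cat{PsTop}$), so each $f|_{A_i}=f\circ\iota_i$ is a composite of continuous maps.

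The substantive content is the reverse direction. The key observation is that, because $(X,c)$ is a closure space, \cref{prop:alternative_char_of_closure_spaces} implies $\mathcal{U}(x)\to x$ in the associated convergence for every $x\in X$, and upward closure then forces $\lambda\to x$ in $(X,c)$ to be equivalent to $\lambda\supset\mathcal{U}(x)$. Since upward closure also holds in $(X',\Lambda)$ and $\lambda\supset\mathcal{U}(x)$ implies $f(\lambda)\supset f(\mathcal{U}(x))$, to verify continuity of $f$ it suffices to prove that $f(\mathcal{U}(x))\to f(x)$ in $(X',\Lambda)$ for every $x\in X$.

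Now fix $x\in X$ and split into cases. If $x\in A_1\cap A_2$, then continuity of both restrictions yields $f|_{A_i}(\mathcal{U}_{A_i}(x))\to f(x)$ in $X'$ for $i=1,2$, where $\mathcal{U}_{A_i}(x)$ is generated by the standard filter base $\{U\cap A_i:U\in\mathcal{U}(x)\}$. A short filter calculation, using $A_1\cup A_2=X$ (so that $f(U\cap A_1)\cup f(U\cap A_2)=f(U)$ for every $U\in\mathcal{U}(x)$), yields the identity
\[
f(\mathcal{U}(x)) = f|_{A_1}(\mathcal{U}_{A_1}(x)) \cap f|_{A_2}(\mathcal{U}_{A_2}(x)),
\]
so property (3) in the definition of a limit space forces the intersection, and hence $f(\mathcal{U}(x))$, to converge to $f(x)$. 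If instead $x$ lies in only one of the sets, say $x\in A_1\setminus A_2$, then $X\setminus A_2$ is open in $(X,c)$ (as $A_2$ is closed), contains $x$, and sits inside $A_1$; intersecting arbitrary neighborhoods of $x$ with $X\setminus A_2$ produces a cofinal family of neighborhoods of $x$ contained in $A_1$. This allows one to identify $f(\mathcal{U}(x))$ directly with $f|_{A_1}(\mathcal{U}_{A_1}(x))$, which converges to $f(x)$ by continuity of $f|_{A_1}$; the case $x\in A_2\setminus A_1$ is symmetric.

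The main obstacle is that the restricted filter $f|_{A_i}(\mathcal{U}_{A_i}(x))$ is in general strictly \emph{finer} than $f(\mathcal{U}(x))$, whereas the upward closure axiom transfers convergence only from coarser to finer filters, never the other way. Hence knowing that some $f|_{A_i}(\mathcal{U}_{A_i}(x))\to f(x)$ does not by itself give $f(\mathcal{U}(x))\to f(x)$. The crux of the argument is to recover convergence of the coarser filter $f(\mathcal{U}(x))$ either by exhibiting it as the intersection of two converging filters on the overlap $A_1\cap A_2$ (using the limit-space axiom) or by using closedness of the opposite set to trim all neighborhoods into a single $A_i$ when $x$ avoids it. Verifying the filter identity and the cofinality claim carefully is where essentially all of the technical work lies.
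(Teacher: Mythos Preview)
Your argument is correct. The forward direction is immediate, and for the reverse direction your two-case split is exactly the right idea: on $A_1\cap A_2$ you correctly exploit the limit-space axiom (closure under binary intersections of convergent filters) via the identity
\[
f(\mathcal{U}(x)) \;=\; f|_{A_1}(\mathcal{U}_{A_1}(x)) \,\cap\, f|_{A_2}(\mathcal{U}_{A_2}(x)),
\]
and off the overlap you use closedness of the complementary piece to find $X\setminus A_2\in\mathcal{U}(x)$, which makes $\{U\cap A_1:U\in\mathcal{U}(x)\}$ a base for $\mathcal{U}(x)$ itself (not merely for a finer filter), so that $f(\mathcal{U}(x))=f|_{A_1}(\mathcal{U}_{A_1}(x))$. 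Your diagnosis of the obstacle---that upward closure transfers convergence only in one direction, so one cannot naively deduce convergence of the coarser filter $f(\mathcal{U}(x))$ from that of a single finer $f|_{A_i}(\mathcal{U}_{A_i}(x))$---is precisely why the limit-space axiom is needed in the overlap case and why closedness is needed otherwise.

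There is nothing to compare against in the paper itself: the proposition is quoted from \cite[Proposition A.2.2]{preuss2002foundationsoftopology} and no proof is given here. Your proof is essentially the standard one (and is what one finds in Preuss), so this is not a genuinely different route.

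One minor suggestion: you might make explicit that the trace-filter description $\mathcal{U}_{A_i}(x)=\langle\,U\cap A_i:U\in\mathcal{U}(x)\,\rangle$ follows from \cref{def:subspace_convergence} together with the characterization $\lambda\to x \Leftrightarrow \lambda\supset\mathcal{U}(x)$ in closure spaces; this is the one place where a reader might pause.
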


\subsection{Modifications and inclusion adjunctions}

The following describes different modifications of one type of space to another.  These modifications are also functors on the underlying categories. 

\begin{itemize}[left=0pt]
    \item  Let $(X,\Lambda)\in \cat{PsTop}$. Define $(X,a)\in \cat{Cl}$ by
    \[a(A)=\{x\in X\,|\, \exists\lambda \to x,A\in \lambda\},\]
    that is, $a(A)$ is the adherence of $A$ in $(X,\Lambda)$. One can check that adherences satisfy the closure axioms \cite[Lemma 1.3.3]{beattie2002convergencestructures}.
     The closure space $(X,a)$ is the finest closure space coarser than $(X,\Lambda)$, called the \textbf{closure modification} of $\Lambda$.
      \item  Let $(X,c)\in \cat{Cl}$. Define $(X,\tau(c))\in \cat{Top}$ by
      \[\tau(c)(A)=\bigcap \{K\subset X\,|\, c(K)=K,A\subset K\},\]
      that is, $\tau(c)(A)$ is the intersection of all closed sets in $(X,c)$ that contain $A$.
     The topological space $(X,\tau(c))$ is the finest topological space coarser than $(X,c)$, called the \textbf{topological modification} of $(X,c)$.
\end{itemize}

\begin{proposition}{\cite[Proposition 2.3.1.5]{preuss2011foundations}}
    \label{prop:adjunctions}
    The following are composite adjunctions, with right adjoints being inclusion functors of full subcategories, between $\cat{Top}$ and $\cat{PsTop}$.
    \begin{equation*}
        \begin{tikzcd}
        \cat{Top} \ar[r,shift right=1ex,hook,"\bot"] &
        \cat{Cl} \ar[l,shift right=1ex,"
        \tau"'] \ar[r,shift right=1ex,hook,"\bot"] &
        \cat{PsTop} \ar[l,shift right=1ex,"\textnormal{a}"'] 
        \end{tikzcd} 
    \end{equation*} 
\end{proposition}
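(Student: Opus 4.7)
The plan is to verify each of the two adjunctions separately by exhibiting a natural bijection on Hom-sets that acts as the identity on underlying functions, and then to identify the units as continuous identity maps. For the adjunction $a \dashv \iota\colon \cat{Cl} \hookrightarrow \cat{PsTop}$, given $(X,\Lambda) \in \cat{PsTop}$ and $(Y,d) \in \cat{Cl}$, I would show that a function $f\colon X \to Y$ is continuous $(X,\Lambda) \to \iota(Y,d)$ in $\cat{PsTop}$ if and only if it is continuous $a(X,\Lambda) = (X,a) \to (Y,d)$ in $\cat{Cl}$. The forward direction is immediate from \cref{lemma:cont_maps_in_pstop_are_cont_in_cl}, since the adherence of $\iota(Y,d)$ coincides with $d$.

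For the backward direction, suppose $f\colon (X,a) \to (Y,d)$ is continuous in $\cat{Cl}$ and $\lambda \to x$ in $(X,\Lambda)$. Because $(X,a)$ is by construction coarser than $(X,\Lambda)$, the identity $(X,\Lambda) \to \iota(X,a)$ is continuous in $\cat{PsTop}$, so $\lambda$ contains the neighborhood filter $\mathcal{U}_a(x)$ of $x$ in $(X,a)$; verifying this uses the pseudotopological property together with the definition of $a$ as an adherence (if $U \in \mathcal{U}_a(x)$ were missing from $\lambda$, some ultrafilter finer than $\lambda$ would contain $X \setminus U$ and still converge to $x$, forcing $x \in a(X \setminus U)$, a contradiction). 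Then \cref{theorem:continuity_in_terms_of_nbhds} supplies, for each neighborhood $V$ of $f(x)$ in $(Y,d)$, a neighborhood $U \in \mathcal{U}_a(x) \subset \lambda$ with $f(U) \subset V$, so $f(\lambda)$ contains $\mathcal{U}_d(f(x))$, which is precisely the condition $f(\lambda) \to f(x)$ in $\iota(Y,d)$.

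For the adjunction $\tau \dashv \iota'\colon \cat{Top} \hookrightarrow \cat{Cl}$, I would exploit that $c(A) \subset \tau(c)(A)$ for all $A$ (since $\tau(c)$ is coarser than $c$), which makes the backward implication $f(c(A)) \subset f(\tau(c)(A)) \subset d(f(A))$ immediate. For the forward direction, I use idempotency of $d$: the set $d(f(A))$ is $d$-closed, so by continuity of $f\colon (X,c) \to (Y,d)$ the preimage $f^{-1}(d(f(A)))$ is $c$-closed and contains $A$, hence contains $\tau(c)(A)$, which yields $f(\tau(c)(A)) \subset d(f(A))$.

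Naturality of both bijections is automatic because they act trivially on underlying functions, and the units $(X,\Lambda) \to \iota(a(X,\Lambda))$ and $(X,c) \to \iota'(\tau(X,c))$ are identity set-maps, which the continuity arguments above show are morphisms. Composition of the two adjunctions then gives the asserted composite adjunction. The main subtlety is the backward direction of the first adjunction, where one must translate filter convergence in $\cat{PsTop}$ into the neighborhood language of $\cat{Cl}$; everything else amounts to unpacking the definitions of the modification functors.
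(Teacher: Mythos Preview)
Your argument is correct. The paper does not supply its own proof of this proposition; it simply cites \cite[Proposition 2.3.1.5]{preuss2011foundations}, so there is nothing to compare against beyond noting that your direct Hom-set verification is a standard and complete way to establish the reflectivity of the two subcategories. One small remark: in the backward direction of the first adjunction you invoke the pseudotopological axiom to show $\mathcal{U}_a(x)\subset\lambda$, but in fact upward closure of the convergence relation already suffices (any ultrafilter refining $\lambda$ converges to $x$ by upward closure alone), so your argument works for arbitrary convergence spaces, not just pseudotopological ones.
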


\subsection{Limits and colimits of diagrams of spaces}
Let $\cat{C}\in \{\cat{Top},\cat{Cl},\cat{PsTop}\}$ represent any of the categories of spaces among topological, closure, or pseudotopological spaces. If $X$ is an object in $\cat{C}$ we say $X$ is a $\cat{C}$-space or that $X$ is a set with a $\cat{C}$-structure. Here we recall limits and colimits of diagrams of $\cat{C}$-spaces and recall constructions that show $\cat{C}$ is complete and co-complete. For a more detailed treatment of limits and colimits of $\cat{C}$-spaces, we refer the reader to \cite[Chapter 1]{preuss2002foundationsoftopology}.

Let $X\colon \cat{I}\to \cat{C}$ be a diagram of $\cat{C}$-spaces, i.e., $X$ is a functor from some indexing category $\cat{I}$ to $\cat{C}$. More specifically, for all objects $i$ in $\cat{I}$ we have a $\cat{C}$-space $X_i$ and for every morphism $\varphi\colon i\to j$ in $\cat{I}$ we have a continuous map $X_{\varphi}\colon X_i\to X_j$.

\begin{definition}
    A \textbf{limit} of the $\cat{I}$-diagram $X$ in $\cat{C}$ is given by a $\cat{C}$-space $\limit_{\cat{I}}X$ together with continuous maps $p_i\colon\lim_{\cat{I}}X\to X_i$ such that
    \begin{enumerate}[left=0pt]
        \item For $\varphi\colon i\to j$ a morphism in $\cat{I}$ we have $p_j=X_{\varphi}\circ p_i$.
        \item For any $\cat{C}$-space $Y$ and any family of continuous maps $q_i\colon Y\to X_i$, indexed by $\cat{I}$, such that for all $\varphi\colon i\to j$ in $\cat{I}$ we have $q_j=X_{\varphi}\circ q_i$ there exists a unique continuous map $q\colon Y\to \lim_{\cat{I}}X$ such that $q_i=p_i\circ q$ for every object $i$ in $\cat{I}$.
    \end{enumerate}
\end{definition}

Dually, we define colimits of $\cat{C}$-spaces.

\begin{definition}
    A \textbf{colimit} of the $\cat{I}$-diagram $X$ in $\cat{C}$ is given by a $\cat{C}$-space $\colimit_{\cat{I}}X$ together with continuous maps $s_i\colon X_i\to \colimit_{\cat{I}}X$ such that
    \begin{enumerate}[left=0pt]
        \item For $\varphi\colon i\to j$ a morphism in $\cat{I}$ we have $s_i=s_j\circ X_{\varphi}$.
        \item For any $\cat{C}$-space $Y$ and any family of continuous maps $t_i\colon X_i\to Y$, indexed by $\cat{I}$, such that for all $\varphi\colon i\to j$ in $\cat{I}$ we have $t_i=t_j\circ X_{\varphi}$ there exists a unique continuous map $t\colon \colimit_{\cat{I}}X\to Y$ such that $t_i=t\circ s_i$ for every object $i$ in $\cat{I}$.
    \end{enumerate}
\end{definition}

\begin{remark}
    \label{remark:colimits}
    An immediate consequence of the inclusion functors being right adjoints in \cref{prop:adjunctions} is that limits of diagrams are preserved as we move up the categories, as right adjoints preserve limits. For example, a limit of a diagram topological spaces is still a topological space in $\cat{Cl}$ and $\cat{PsTop}$. However, colimits do not have to be preserved. In particular, a colimit of a diagram of topological spaces in $\cat{Cl}$ does not have to be a topological space, see for example \cite[Introduction to Section 33.B]{cech1966topologicalspaces} and \cite{bubenik2023cw}.
\end{remark}

Recall that a \textbf{concrete category} is a pair $(\cat{K},U)$ where $\cat{K}$ is a category and $U \colon \cat{K}\to \cat{Set}$ is a faithful functor from $\cat{K}$ to the category of sets. For $\cat{C}$-spaces we have the forgetful functors $U\colon \cat{Top},\cat{Cl},\cat{PsTop}\to \cat{Set}$ that given a topological, closure, or pseudotopological space return the underlying set, forgetting the structure. These forgetful functors make any $\cat{C}\in \{\cat{Top},\cat{Cl},\cat{PsTop}\}$ into a concrete category. Furthermore, by \cref{example:discrete_pstop,example:discrete_cl}, $U$ is a right adjoint to the functor $\textnormal{di}\colon \cat{Set}\to \cat{C}$ that assigns to any set $Y$ the discrete topological structure on $Y$. In other words, we have canonical bijections for any $\cat{C}$-space $X$ and any set $Y$:
\[\textnormal{Hom}_{\cat{C}}(\textnormal{di}(Y),X)\cong \textnormal{Hom}_{\cat{Set}}(Y,U(X)).\]
Similarly, $U$ is left adjoint to the functor $\textnormal{ind}\colon \cat{Set}\to \cat{C}$ that assigns to any set $Y$ the indiscrete topological structure on $Y$, by \cref{example:indiscrete_pstop,example:indiscrete_cl}. That is we have canonical bijections for any $\cat{C}$-space $X$ and any set $Y$:
\[\textnormal{Hom}_{\cat{C}}(X,\textnormal{ind}(Y))\cong \textnormal{Hom}_{\cat{Set}}(U(X),Y).\]

Since $U$ is both a left and right adjoint, it preserves limits and colimits. This means that in order to compute limits or colimits in any of these categories we can  compute the limits or colimits in $\cat{Set}$. Afterwards we associate the coarsest or finest, respectively, $\cat{C}$-structures on the limits or colimits in $\cat{Set}$ that will make them limits or colimits in $\cat{C}$.

\begin{example}
    Let $I$ be a set, and let $\cat{I}$ the category whose objects are the elements of $I$ and whose morphisms are only the identity morphisms. 
    \begin{enumerate}[left=0pt]
        \item For a diagram of $\cat{C}$-spaces $X\colon \cat{I}\to \cat{C}$, $\limit_{\cat{I}}X_i$ is the product $\prod_{i\in I}X_i$ with projections maps $p_i\colon \prod_{i\in I}X_i\to X_i$ and the $\cat{C}$-space structure on the underlying set of $\prod_{i\in I}X_i$ is the coarsest $\cat{C}$-structure on $\prod_{x\in I}X_i$ making all the projection maps continuous. We call $\limit_{\cat{I}}X_i$ the \textbf{product} $\cat{C}$-structure. 
        
        If $\cat{C}=\cat{PsTop}$, a filter $\lambda$ on $\prod_{i\in I}X_i$ converges to $x$ in $\prod_{i\in I}X_i$ if and only if $p_i(\lambda)$ converges to $p_i(x)$ in the convergence structure on $X_i$.

        In the case that $\cat{C}\in \{\cat{Top},\cat{Cl}\}$ we can also describe the $\cat{C}$-structure on $\prod_{i\in I}X_i$, i.e., the closure operator on $\prod_{i\in I}X_i$, in the following way.
        For all $x\in \prod_{i\in I}X_i$, let $\mathcal{U}_x$ denote the collection of sets of the form
        \begin{equation*} \label{eq:product}
            \bigcap\{p_i^{-1}(V_i)\,|\,i\in A\},
        \end{equation*}
        where $A\subset I$ is finite and $V_i$ is a neighborhood of $p_i(x)$ in $X_i$. 
        For $B\subset  X$, let 
        \[c(B)=\{x\in X\,|\, \forall U\in \mathcal{U}_x, U\cap B\neq \varnothing\}.\]
        The closure $c$ is called the \textbf{product closure} for $\prod_{i\in I}X_i$ and $(\prod_{i\in I}X_i,c)$ is called the \textbf{product closure space}. For closure spaces $(X,c)$ and $(Y,d)$, their product is denoted by $(X\times Y,c\times d)$. If $X$ is a diagram of topological spaces, the product closure is topological and the resulting topological space is said to have the \textbf{product topology}.
        \item For a diagram of $\cat{C}$-spaces $X\colon \cat{I}\to \cat{C}$, $\colimit_{\cat{I}}X_i$ is the coproduct $\coprod_{i\in I}X_i$ with coprojection (inclusion) maps $s_i\colon X_i\to \coprod_{i\in I}X_i$ and the $\cat{C}$-structure on the underlying set of $\coprod_{i\in I}X_i$ is the finest $\cat{C}$-structure making all the coprojection maps continuous. We call $\colimit_{\cat{I}}X_i$ the \textbf{coproduct} $\cat{C}$-structure. 
        
        Let $\cat{C}=\cat{PsTop}$ and let $x\in \sqcup_{i\in I}X_i$. Suppose $k$ is such that $x\in X_k$. Then, a filter $\lambda$ on $\sqcup_{i\in I}X_i$ converges to $x\in \sqcup_{i\in I}X_i$ if and only if $\lambda=\dot{x}$ or there is a filter $\lambda_k$ converging to $x$ in the convergence structure on $X_k$, such that  $s_i(\lambda_k)\subset \lambda$. 
        
        In the case that $\cat{C}\in\{\cat{Top},\cat{Cl}\}$, the \textbf{coproduct}
        of $\{(X_i,c_i)\}_{i\in I}$ is the  disjoint union of sets $X=\sqcup_i X_i$
        with the closure operation $c$ for $X$ defined by $c(\sqcup_i A_i):=\sqcup_i c_i(A_i)$ for all subsets $\sqcup_i A_i$ of $\sqcup_{i\in I}X_i$. If all $X_i$ are topological spaces, then $c$ is a topological closure operator. 
    \end{enumerate}
\end{example}

\begin{example}
    Let $\cat{I}$ be the category with two objects, $1$ and $2$, and two parallel morphisms from $1$ to $2$. Then an $\cat{I}$-diagram of $\cat{C}$-spaces $X\colon \cat{I}\to \cat{C}$ consists of a pair of continuous maps
    $
    \begin{tikzcd}X_1\ar[r,shift left,"f"] \ar[r,shift right,"g"'] & X_2.
    \end{tikzcd}$
    \begin{enumerate}[left=0pt]
        \item  We call $\limit_{\cat{I}}X$ the \textbf{equalizer} of $f$ and $g$. In particular, the equalizer consists of the $\cat{C}$-space $E$ and map $\iota\colon E \to X_1$ defined in the following manner. Let $E=\{x \in X_1 \ | \ f(x)=g(x)\}$ with $\iota$ the inclusion map and the $\cat{C}$-structure on $E$ being the subspace $\cat{C}$-structure induced from $X_1$ (\cref{def:subspace_convergence,def:subspace_closure}).
        \item  We call $\colimit_{\cat{I}}X$ the \textbf{coequalizer} of $f$ and $g$. 
        In particular, the coequalizer consists of the $\cat{C}$-space $Q$ and a map $q\colon X_2 \to Q$ defined in the following manner. Let $Q$ be the quotient set $X_2/\!\! \sim$, where $\sim$ is the equivalence relation given by $f(x) \sim g(x)$, $\forall x \in X_1$. Let $q\colon X_2 \to Q$ be the quotient map. 
        Then the $\cat{C}$-structure on $Q$ is the finest $\cat{C}$-structure making $q$ continuous. 
        
        In the case that $\cat{C}=\cat{Cl}$, the coequalizer consists of the closure space $(Q,c_q)$, where for any $A \subset Q$, set $c_q(A) = q(c_2(q^{-1}(A)))$. In the case that $\cat{C}=\cat{Top}$ the quotient topology on $Q$ is the finest topology making $q$ continuous. This coincides with the topological modification of $c_q$.
    \end{enumerate}
\end{example}

\begin{theorem}{\cite[Theorem 1.2.1.10]{preuss2002foundationsoftopology}} \label{complete}
  The product and equalizer defined above are the categorical product and equalizer in $\cat{C}$ and hence $\cat{C}$ is complete. The coproduct and coequalizer defined above are the categorical coproduct and coequalizer in $\cat{C}$ and hence $\cat{C}$ is co-complete. 
\end{theorem}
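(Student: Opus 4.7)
The plan is to verify each of the four universal properties directly. The forgetful functor $U\colon \cat{C}\to\cat{Set}$ preserves both limits and colimits (being simultaneously left adjoint to $\textnormal{ind}$ and right adjoint to $\textnormal{di}$), so on underlying sets the constructions above are already the correct (co)limits in $\cat{Set}$. The only remaining task is to check that (i) the candidate structure maps (projections, inclusion, coprojections, quotient) are continuous in $\cat{C}$, and (ii) the unique set map supplied by the universal property in $\cat{Set}$ is continuous with respect to the claimed $\cat{C}$-structure.

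For the product, the declared structure on $\prod_{i\in I}X_i$ is the coarsest $\cat{C}$-structure making each projection $p_i$ continuous, and I would first verify such a coarsest structure exists. In $\cat{Top}$ and $\cat{Cl}$ this is standard. In $\cat{PsTop}$, the candidate is $\lambda\to x$ iff $p_i(\lambda)\to p_i(x)$ for all $i$, and one checks upward closure together with the pseudotopological axiom, using that the pushforward of an ultrafilter is again an ultrafilter. Each $p_i$ is then continuous by construction. Given any cone $(q_i\colon Y\to X_i)$, the set map $q\colon Y\to\prod_{i\in I}X_i$ characterized by $p_i\circ q=q_i$ is continuous, since for $\lambda\to y$ in $Y$ we have $p_i(q(\lambda))=q_i(\lambda)\to q_i(y)=p_i(q(y))$, and this forces $q(\lambda)\to q(y)$ by the very definition of the product convergence.

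For the equalizer $(E,\iota)$ the inclusion $\iota\colon E\hookrightarrow X_1$ is continuous by \cref{def:subspace_convergence} and \cref{def:subspace_closure}. A map $h\colon Y\to X_1$ with $f\circ h=g\circ h$ factors set-theoretically through $E$ as $\tilde h$; continuity of $\tilde h$ follows from the universal property of the subspace structure, namely that a map into a subspace is continuous iff its composition with the inclusion is. The coproduct and coequalizer cases are dual: the coproduct carries the finest $\cat{C}$-structure making every coprojection $s_i$ continuous, and the coequalizer carries the finest $\cat{C}$-structure making $q$ continuous. The universal properties follow because a map out of the coproduct (resp.\ the coequalizer) is continuous iff each restriction along $s_i$ (resp.\ the pullback along $q$) is, which is precisely the characterization of the finest such structure.

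Once small products and equalizers are in hand, completeness of $\cat{C}$ follows from the standard categorical construction realizing the limit of $X\colon\cat{I}\to\cat{C}$ as the equalizer of the parallel pair $\prod_{i\in\textnormal{Ob}(\cat{I})}X_i\rightrightarrows\prod_{\varphi\colon i\to j}X_j$ (one arrow given by projection, the other by applying $X_\varphi$); cocompleteness follows dually from coproducts and coequalizers. The main technical obstacle throughout is verifying that the coarsest (resp.\ finest) convergence structure imposed by the continuity constraints is genuinely pseudotopological. For the product this reduces to the observation that pushforwards of ultrafilters are ultrafilters, so an ultrafilter refinement of $\lambda$ on $\prod X_i$ yields ultrafilter refinements of $p_i(\lambda)$; for the subspace structure on $E\subset X_1$ one uses that ultrafilters on $E$ arise as restrictions of ultrafilters on $X_1$ concentrated on $E$. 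The dual verifications for coproducts and coequalizers proceed analogously, and the topological modification argument handles the $\cat{Top}$-case of coequalizers (as noted in the excerpt).
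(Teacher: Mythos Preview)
The paper does not supply a proof of this theorem; it simply cites \cite[Theorem 1.2.1.10]{preuss2002foundationsoftopology} and moves on. Your outline is therefore not competing with anything in the paper, and as a sketch it is essentially the standard argument one finds in Preuss: verify that the explicit constructions carry initial (resp.\ final) $\cat{C}$-structures, check the universal properties, and invoke the general fact that products plus equalizers (resp.\ coproducts plus coequalizers) give all small (co)limits.

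One place where your sketch is thinner than it should be is the sentence ``the dual verifications for coproducts and coequalizers proceed analogously.'' For the coequalizer in $\cat{PsTop}$ the final structure is \emph{not} simply ``$\lambda\to [x]$ iff $\lambda$ is the pushforward of some convergent filter upstairs''; one must close under the pseudotopological axiom, declaring $\lambda\to [x]$ whenever every ultrafilter refining $\lambda$ is either principal at $[x]$ or contains the pushforward of a convergent ultrafilter upstairs. Without this closure step the resulting convergence need not be pseudotopological, and you have not said how to carry it out. This is the genuine technical content hidden behind your ``analogously,'' and it is the reason the paper defers to Preuss rather than writing it out. Everything else in your outline is correct.
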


We now consider pushouts of $\cat{C}$-spaces.

\begin{definition} \label{def:pushout}
    Consider the maps of $\cat{C}$-spaces $f$ and $i$ in the following diagram.
    \begin{equation} \label{cd:pushout}
        \begin{tikzcd} 
            (A,b) \ar[r,"f"] \ar[d,"i"] & (Y,d) \ar[d,dashed,"j"] \\
            (X,c) \ar[r,dashed,"g"] & (Z,e)   
        \end{tikzcd}
    \end{equation}
    The \textbf{pushout} of $f$ and $i$ is the colimit of the above diagram, given by a $\cat{C}$-space $Z$ and continuous maps $g\colon X\to Z$, $j\colon Y\to Z$.
    The set $Z$ can be taken to be
    $Z = (Y \amalg X)/ \!\! \sim$, where for all $a \in A$, $f(a) \sim g(a)$,
    the induced functions $j$ and $g$.
    In the case that $\cat{C}=\cat{PsTop}$, we have written out the adherence operators $b,c,d$ and $e$ on $A, X,Y$ and $Z$ respectively in \eqref{cd:pushout} for convenience. However the adherence operators are not used to compute the pushout in \eqref{cd:pushout}, the convergence structures are. In the case that $\cat{C}=\cat{Cl}$, then $b,c,d$ and $e$ are the closure operators on $A,X,Y$ and $Z$, respectively. In either case,  for $B \subset Z$ we have
    \begin{equation*}
        e(B) = j (d( j^{-1}(B))) \cup g (c (g^{-1}( B))).
    \end{equation*}
    If $\cat{C}=\cat{Top}$, the topological closure operator on $Z$ is the topological modification of $e$.
\end{definition}

\subsection{Compactness of spaces}
\label{section:Compactness}
Here we discuss the notion of compactness of $\cat{C}$-spaces for $\cat{C}\in \{\cat{Top},\cat{Cl},\cat{PsTop}\}$. 

\begin{definition}
    Let $X$ be a pseudotopological space and let $x\in X$. A \textbf{local covering system} at $x$ is a collection $\mathcal{C}$ of subsets of $X$ such that $\mathcal{C}\cap \lambda\neq \varnothing$ for all $\lambda\to x$ in $X$. For a subset $A$ of $X$, a \textbf{covering system} of $A$ is a collection of subsets of $X$, $\mathcal{C}$, that is a local covering system at each point of $A$.
\end{definition}

\begin{lemma}
\label{lemma:interior_covers_are_covering_systems}
    Let $X$ be a pseudotopological space and $\mathcal{U}$ an interior cover of $X$. Then $\mathcal{U}$ is a covering system of $X$.
\end{lemma}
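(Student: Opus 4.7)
The plan is to unpack the definitions and produce, for each $x \in X$ and each $\lambda \to x$, a member of $\mathcal{U}$ that lies in $\lambda$. Since $\mathcal{U}$ is an interior cover, for any fixed $x \in X$ we can choose some $U \in \mathcal{U}$ with $x \in i(U) = X \setminus a(X \setminus U)$, so that by the definition of the adherence operator, no filter converging to $x$ may contain $X \setminus U$. It then suffices to show $U \in \lambda$.

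I would argue by contradiction: suppose $U \notin \lambda$. Then no element of $\lambda$ is a subset of $U$ (otherwise upward closure of $\lambda$ would force $U \in \lambda$), so $F \cap (X \setminus U) \neq \varnothing$ for every $F \in \lambda$. Hence the family $\{F \cap (X \setminus U) : F \in \lambda\}$ is a filter base, and the filter $\mu$ it generates satisfies $\mu \supset \lambda$ and $X \setminus U \in \mu$. By the upward closure axiom of the convergence structure, $\mu \to x$. But then $x \in a(X \setminus U)$, contradicting $x \in i(U)$. Therefore $U \in \lambda$, so $\mathcal{U} \cap \lambda \neq \varnothing$.

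Since this argument works for an arbitrary filter $\lambda \to x$ and an arbitrary $x \in X$, the collection $\mathcal{U}$ is a local covering system at every point of $X$, i.e., a covering system of $X$.

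The only subtlety is recognizing the interplay between the two upward-closure properties — the one in the definition of a filter (used to extend $\lambda$ by $X \setminus U$) and the one in the definition of a convergence structure (used to conclude that the enlarged filter still converges to $x$). Once that is in hand, the proof is essentially a direct translation of $x \in i(U)$ into the filter language.
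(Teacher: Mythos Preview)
Your proof is correct. The paper's argument is essentially the same but appeals directly to the neighborhood filter: once $U$ is a neighborhood of $x$ it lies in $\mathcal{U}(x)=\bigcap\{\lambda\mid\lambda\to x\}$, hence in every $\lambda\to x$. Your contradiction argument, which extends $\lambda$ to a filter containing $X\setminus U$, is precisely what justifies the implicit identification the paper makes between ``$x\in i(U)$ in the closure space $(X,a)$'' and ``$U\in\mathcal{U}(x)$ in the convergence sense''; so your version is slightly more self-contained, while the paper's is terser.
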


\begin{proof}
    Let $x\in X$ and let $\lambda\to x$ in $X$. Since $\mathcal{U}$ is an interior cover of $X$, there exists a $U\in \mathcal{U}$ such that $U$ is a neighborhood of $X$. In particular $U$ belongs to the neighborhood filter at $x$, and therefore $U$ belongs to every filter on $X$ that converges to $x$. Therefore $U\in \lambda$ and thus $\mathcal{U}\cap\lambda\neq\varnothing$. Therefore $\mathcal{U}$ is a local covering system at $x$, and therefore $\mathcal{U}$ is a covering system of $x$.
\end{proof}

The following are usually taken to be results for characterizing compactness of pseudotopological and closure spaces, but we will take them as definitions of compactness. 

\begin{definition}{\cite[Proposition 1.4.15]{beattie2002convergencestructures}}
    A pseudotopological space $X$ is \textbf{compact} if for every covering system $\mathcal{C}$ of $X$, there is a finite subsystem $\mathcal{C}'\subset \mathcal{C}$ which covers $X$ ($\mathcal{C}'$ does not have to be a covering system of $X$, only a cover of $X$).
\end{definition}

In particular if $X$ is a compact pseudotopological space, then every interior cover of $X$ has a finite subcover by \cref{lemma:interior_covers_are_covering_systems}.

\begin{definition}{\cite[Theorem 41.A.9]{cech1966topologicalspaces}}
    A closure space $X$ is \textbf{compact} if every interior cover $\mathcal{U}$ of $X$ has a finite subcover (the finite subcover does not have to be an interior cover).
\end{definition}

A topological space $X$ is \textbf{compact} if every open cover has a finite subcover. As expected, the definitions of compactness agree across the categories $\cat{Top}$, $\cat{Cl}$ and $\cat{Pstop}$. For example, if $X$ is a topological space then $X$ is a compact pseudotopological space if and only if $X$ is a compact topological space.

\begin{proposition}{\cite[Proposition 1.4.7]{beattie2002convergencestructures}}
\label{prop:image_of_compact_is_compact}
    Let $f\colon X\to Y$ be a map of pseudotopological spaces. If $X$ is compact then $f(X)$ as a subspace of $Y$ is compact.
\end{proposition}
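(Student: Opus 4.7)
The plan is to lift a covering system of $f(X)$ (viewed as a subspace of $Y$) to a covering system of $X$ via $f^{-1}$, and then use compactness of $X$ to extract a finite subcover whose images cover $f(X)$.

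Let $\mathcal{D}$ be a covering system of $f(X)$ with the subspace convergence (\cref{def:subspace_convergence}). Set
\[
\mathcal{C} \defeq \{f^{-1}(D) \,|\, D \in \mathcal{D}\}.
\]
The first step is to verify that $\mathcal{C}$ is a covering system of $X$. Fix $x \in X$ and $\lambda \to x$ in $X$. By continuity of $f$, the filter $f(\lambda)$ on $Y$ converges to $f(x)$, and since every generator $f(U)$ of $f(\lambda)$ is contained in $f(X)$, the set $f(X)$ lies in $f(\lambda)$. Hence the trace of $f(\lambda)$ on $f(X)$ is a filter $\mu$ on $f(X)$ converging to $f(x)$ in the subspace convergence. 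Because $\mathcal{D}$ is a covering system of $f(X)$, some $D \in \mathcal{D}$ lies in $\mu$, which unwinds to $f(U) \subset D$ for some $U \in \lambda$, and therefore $U \subset f^{-1}(D)$. By upward closure $f^{-1}(D) \in \lambda$, so $\mathcal{C} \cap \lambda \neq \varnothing$, as required.

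Now apply the compactness of $X$ to extract a finite subfamily $\{f^{-1}(D_1), \dots, f^{-1}(D_n)\} \subset \mathcal{C}$ that covers $X$. The final step is routine: for any $y \in f(X)$, pick $x$ with $f(x) = y$, find $k$ with $x \in f^{-1}(D_k)$, and conclude $y \in D_k$. Thus $\{D_1,\dots,D_n\}$ is a finite subfamily of $\mathcal{D}$ covering $f(X)$, proving that $f(X)$ is compact.

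The only subtle point, and hence the main obstacle worth flagging, is the passage between filters on $Y$ and filters on the subspace $f(X)$: one needs the observation that $f(\lambda)$ automatically contains $f(X)$, so that it restricts to a filter on $f(X)$ that converges to $f(x)$ in the subspace structure. Once this is in hand, the rest of the proof is a straightforward pullback argument analogous to the classical topological case.
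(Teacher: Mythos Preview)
Your proof is correct. The paper itself does not supply a proof of this proposition; it simply cites \cite[Proposition 1.4.7]{beattie2002convergencestructures}, so there is nothing to compare against beyond noting that your covering-system argument is the standard one and matches the definitions the paper adopts.
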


\begin{lemma}
\label{lemma:finite_cover_whose_image_is_contained_in_covering_system}
Let $X$ be a compact topological space. Let $Y$ be a pseudotopological space with interior cover $\mathcal{V}$, and suppose that $f\colon X\to Y$ is continuous. Then there exists an (finite) open cover $\mathcal{U}$ of $X$ such that for all $U\in \mathcal{U}$, there is a $V\in \mathcal{V}$ such that $f(U)\subset V$.
\end{lemma}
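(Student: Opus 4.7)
The plan is to use pointwise continuity of $f$ combined with the interior-cover hypothesis on $\mathcal{V}$ to produce, for each $x \in X$, an open neighborhood of $x$ whose image lies in some member of $\mathcal{V}$; then extract a finite subcover via compactness of $X$.

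More precisely, I would first translate the hypotheses into the closure-space setting so that \cref{theorem:continuity_in_terms_of_nbhds} applies. By \cref{lemma:cont_maps_in_pstop_are_cont_in_cl}, the $\cat{PsTop}$-continuous map $f$ remains continuous as a map of closure spaces $f\colon (X,c_X)\to (Y,a_Y)$, where $c_X$ is the (topological) closure on $X$ and $a_Y$ is the closure modification of the pseudotopology on $Y$. Since by \cref{def:interior_cover_pseudotopological} an interior cover of the pseudotopological space $Y$ is exactly an interior cover of the associated closure space $(Y,a_Y)$, for each $x \in X$ there exists $V_x \in \mathcal{V}$ with $f(x) \in i_Y(V_x)$, i.e.\ $V_x$ is a neighborhood of $f(x)$ in $(Y,a_Y)$.

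Next, I apply \cref{theorem:continuity_in_terms_of_nbhds} to obtain, for each $x \in X$, a neighborhood $N_x$ of $x$ in $(X,c_X)$ with $f(N_x) \subset V_x$. Because $X$ is a topological space, the interior $W_x \defeq i_X(N_x)$ is an open set of $X$ containing $x$, and $f(W_x) \subset f(N_x) \subset V_x$. The collection $\{W_x\}_{x \in X}$ is then an open cover of $X$. Since $X$ is a compact topological space, we can extract a finite subcover $\{W_{x_1},\dots,W_{x_n}\}$, and setting $\mathcal{U} \defeq \{W_{x_1},\dots,W_{x_n}\}$ yields the desired finite open cover with the refinement property $f(W_{x_k}) \subset V_{x_k} \in \mathcal{V}$.

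The only nontrivial point is bridging the pseudotopological setting on the target with the topological setting on the source, so that pointwise continuity can be expressed in the familiar neighborhood language; this is handled by the adherence-preserving characterization in \cref{lemma:cont_maps_in_pstop_are_cont_in_cl} and the neighborhood-wise formulation of continuity in \cref{theorem:continuity_in_terms_of_nbhds}. Once this translation is in place, the remainder of the argument is the usual compactness-plus-continuity extraction of a finite refinement.
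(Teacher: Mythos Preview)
Your proof is correct and follows essentially the same approach as the paper: pass to the closure modification of $Y$ via \cref{lemma:cont_maps_in_pstop_are_cont_in_cl}, use \cref{theorem:continuity_in_terms_of_nbhds} to pull back a neighborhood $V_x$ of $f(x)$ from the interior cover to a neighborhood of $x$, shrink to an open set using that $X$ is topological, and finish with compactness. The only cosmetic difference is that you explicitly take $W_x = i_X(N_x)$, whereas the paper just asserts the existence of an open $U_x \subset W_x$ containing $x$.
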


\begin{proof}
    Let $d$ be the adherence operator on $Y$. Since $f\colon X\to Y$ is morphism in $\cat{PsTop}$ the function $f\colon X\to (Y,d)$ is a morphism in $\cat{Cl}$ by \cref{lemma:cont_maps_in_pstop_are_cont_in_cl}. Since $\mathcal{V}$ is an interior cover, for all $x\in X$, there is a neighborhood of $f(x)$ in $\mathcal{V}$. Label this neighborhood by $V_{f(x)}$. By \cref{theorem:continuity_in_terms_of_nbhds}, for the neighborhood $V_{f(x)}\subset Y$ of $f(x)$ there exists a neighborhood $W_x \subset X$ of $x$ in $X$ such that $f(W_x) \subset V_{f(x)}$. Since $W_x$ is a neighborhood of $x$ and $X$ is topological, there is an open set $U_x \subset W_x$ with $x \in U_x$ for every $x \in X$. The collection $\{U_x\}_{x\in X}$ is thus an open cover of $X$ which refines $\{W_x\}_{x\in X}$ by construction. Since $X$ is compact, $\{U_x\}_{x\in X}$ admits a finite subcover $\mathcal{U}=\{U_{x_i}\}_{i=1}^n$, and $\mathcal{U}$ satisfies the conclusion of the lemma, by construction. 
\end{proof}

\subsection{Exponential objects} Let $\cat{C}$ be a category with finite products. An object $X$ of $\cat{C}$ is called \textbf{exponentiable} if product with $X$ is left adjoint, i.e., if there are natural bijections 
\[\textnormal{Hom}_{\cat{C}}(X\times Z,Y)\cong \textnormal{Hom}_{\cat{C}}(Z,Y^X),\]
for all objects $Y$ and $Z$ of $\cat{C}$. The \textbf{power object} $Y^X$ is given by the right adjoint $-^X$ of $X\times -$. 

Suppose now that $\cat{C}\in \{\cat{Top},\cat{Cl},\cat{PsTop}\}$. Then $\cat{C}$ is a concrete category over $\cat{Set}$ and constant maps are morphisms in $\cat{C}$. Let $X$ be an exponential object in $\cat{C}$. In particular if $Z=\{*\}$ is the one-point space, i.e., is the terminal object in 
$\cat{C}$, we have $\textnormal{Hom}_{\cat{C}}(X\times \{*\},Y)\cong  \textnormal{Hom}_{\cat{C}}(\{*\},Y^X)$. Thus there is a natural bijection between $\textnormal{Hom}_{\cat{C}}(X,Y)$ and the elements of the power object $Y^X$. This means that there are canonical function spaces for exponential 
objects $X$. Furthermore, let $L_X,R_X\colon \cat{C}\to \cat{C}$ be the functors $Y\mapsto Y\times X$ and $Y\mapsto Y^X$, respectively. The \textbf{evaluation mapping} $\textnormal{ev}_X\colon L_X\circ R_X\to \id_{\cat{C}}$ is the co-unit of the adjunction, in particular $\textnormal{ev}_{X,Y}\colon Y^X\times X\to Y$ is a natural morphism in $\cat{C}$, for every $Y$ in $\cat{C}$. With the 
identification $Y^X\approx \textnormal{Hom}_{\cat{C}}(X,Y)$, the $\cat{C}$-structure on $\textnormal{Hom}_{\cat{C}}(X,Y)$ must be so that $\textnormal{ev}_{X,Y}\colon \textnormal{Hom}_{\cat{C}}(X,Y)\times X\to Y$ is continuous. Furthermore, one computes that $\textnormal{ev}_{X,Y}(f,x)=f(x)$.

\begin{example}
\label{example:continuous_convergence_structure}
    Let $(X,\Lambda_X)$ and $(Y,\Lambda_Y)$ be pseudotopological spaces. The \textbf{continuous convergence structure} $\Lambda_c$ on $\textnormal{Hom}_{\cat{PsTop}}((X,\Lambda_X),(Y,\Lambda_Y))$, which is in fact pseudotopological \cite[Theorem 1.55]{beattie2002convergencestructures}, is given by 
    \[\mathcal{H}\to f\textnormal{ in } \Lambda_c\textnormal{ if and only if }\textnormal{ev}_{X,Y}(\mathcal{H}\times \lambda)\to f(x)\forall(\lambda,x)\in \Lambda_X.\]
    In other words, $\Lambda_c$ is the coarsest convergence on $\textnormal{Hom}_{\cat{PsTop}}((X,\Lambda_X),(Y,\Lambda_Y))$ such that the evaluation map $\textnormal{ev}_{X,Y}$ is continuous. Let $a_c$ denote the adherence operator associated to $\Lambda_c$.
\end{example}

\begin{example}
\label{example:function_spaces}
We have the following examples of exponential objects in different concrete categories over $\cat{Set}$ together with function spaces for exponential objects.
    \begin{enumerate}[left=0pt]
        \item Let $\cat{Top}_2$ be the category of $T_2$ (Hausdorff) topological spaces. A $T_2$ topological space $X$ is exponential in $\cat{Top}_2$ if and only if $X$ is locally compact. In particular, if $X$ is locally compact, then $\textnormal{Hom}_{\cat{Top}_2}(X,Y)$ has a canonical topology that makes the evaluation map continuous.  This is the well known \textbf{compact open topology}. See \cite{fox1945topologies,michael1968local} for more details.
        \item Consider $\cat{Top}$, the category of topological spaces. A topological space $X$ is exponential in $\cat{Top}$ if and only if $X$ is quasi locally compact. Once again, $\textnormal{Hom}_{\cat{Top}}(X,Y)$ with the canonical compact open topology makes the evaluation map continuous. See \cite{day1970topological,hofmann1978spectral} for more details.
        \item Consider $\cat{PsTop}$, the category of pseudotopological spaces. The category $\cat{PsTop}$ is Cartesian closed and therefore any object $X$ of $\cat{PsTop}$ is exponential. The pseudotopological structure on $\textnormal{Hom}_{\cat{PsTop}}(X,Y)$ that makes the evaluation map continuous, for any pseudotopological spaces $X$ and $Y$, is the one described in \cref{example:continuous_convergence_structure}. See \cite{edgar1976cartesian,nel1977cartesian} for more details. 
        \item Consider $\cat{Cl}$, the category of closure spaces. A closure space $X$ is exponential in $\cat{Cl}$ if and only if $X$ is finitely generated, i.e. $X$ is a colimit of a diagram of finite spaces. It can be shown that this is equivalent to the requirement that every point in $X$ has a smallest neighborhood. The closure space structure on $\textnormal{Hom}_{\cat{Cl}}(X,Y)$ making the evaluation map continuous, for given closure spaces $X$ and $Y$, is the following. First consider $X$ and $Y$ as objects in $\cat{PsTop}$ and consider the continuous convergence structure $\Lambda_c$ on $\textnormal{Hom}_{\cat{PsTop}}(X,Y)$ from \cref{example:continuous_convergence_structure}. Note that $\textnormal{Hom}_{\cat{PsTop}}(X,Y)=\textnormal{Hom}_{\cat{Cl}}(X,Y)$ as sets,  since as $\cat{Cl}$ is a full subcategory of $\cat{PsTop}$. Consider the adherence operator $a_c$ associated to $\Lambda_c$ and make $\textnormal{Hom}_{\cat{Cl}}(X,Y)$ into a closure space, where the closure operation is precisely $a_c$.
        See \cite{lowen1993exponential,richter1997more} for more details.
    \end{enumerate}
\end{example}

\begin{remark}
\label{remark:exponential_objects}
    In \cref{example:function_spaces}, we have a sequence of full subcategories $\cat{Top}_2\hookrightarrow \cat{Top}\hookrightarrow \cat{Cl}\hookrightarrow \cat{PsTop}$. Furthermore, as we go from $\cat{Top}_2$ through $\cat{Top}$ and finally $\cat{PsTop}$ the collection of exponentiable objects keeps expanding. Surprisingly, the collection of exponentiable objects shrinks when going from $\cat{Top}$ to $\cat{Cl}$. In particular, the unit interval $I$ with its standard topology is compact and hence a locally compact topological space and is thus an exponentiable object in $\cat{Top}$. However, no point in $I$ has a smallest neighborhood with respect to the topology as one can find arbitrarily small open euclidean balls around each point. Thus $I$ is not exponentiable in $\cat{Cl}$.
\end{remark}

\subsection{CW complexes}
Let $\cat{C}\in \{\cat{Top},\cat{Cl},\cat{PsTop}\}$. Here we show how to construct CW complexes in $\cat{C}$. 

Let $S^n$ be the topological $n$-sphere and $D^n$ the topological $n$-disk. A $\cat{C}$-space $X$ is obtained from a $\cat{C}$-space $A$ by \textbf{attaching cells} if there exists a pushout in $\cat{C}$,

\begin{center}
    \begin{tikzcd}
        \coprod_{j} S_j^{n_j-1}\arrow[d,hookrightarrow,"\coprod_j\iota_j"]\arrow[r,"\varphi"] & A\arrow[d,"\iota_A",dashed]\\
        \coprod_{j}D_j^{n_j}\arrow[r,"\Phi",dashed] & X
    \end{tikzcd}
\end{center}
where for each $j$, $D_j^{n_j}$ is the $n_j$-dimensional disk and $\iota_j\colon S_j^{n_j-1}\to D_j^{n_j}$ is the inclusion of its boundary. We call $\varphi$ the \textbf{cell-attaching map}. If for all $j$, $n_j=n$ we say that $X$ is obtained from $A$ by \textbf{attaching $n$-cells}.

For a $\cat{C}$-space $X$ and a closed subspace $A$, we say the pair $(X,A)$ is a \textbf{relative CW complex} in $\cat{C}$ if $X$ is the colimit of a diagram of $\cat{C}$-spaces
\[A=X^{-1}\hookrightarrow X^0\hookrightarrow X^1\hookrightarrow X^2\hookrightarrow\cdots,\]
where for all $n\ge 0$, $X^n$ is obtained from $X^{n-1}$ by attaching $n$-cells. If $A=\varnothing$, we say $X$ is a \textbf{CW complex} in $\cat{C}$. If the total numbers of cells attached is finite, we call $X$ a \textbf{finite CW complex} in $\cat{C}$. If for each $n\ge 0$, there are only finitely many $n$-cells attached, we call $X$ a \textbf{CW complex of finite type} in $\cat{C}$. If no more cells are attached after a certain $n\ge 0$, we say $X$ is a \textbf{finite-dimensional CW complex} in $\cat{C}$.

It was shown in \cite{bubenik2023cw} that finite-dimensional
CW complexes, CW complexes of finite type, and finite relative
CW complexes need not be topological in $\cat{Cl}$ or $\cat{PsTop}$. However, this does not happen in the case of finite CW complexes. In particular, from \cite[Corollary 4.26]{ebel2023synthetic} we immediately have the following:

\begin{proposition}
\label{prop:finite_cw_complexes_are_topological}
    The construction of finite CW complexes in the categories $\cat{Top}$ and $\cat{PsTop}$ agree. In particular,  the construction of finite CW complexes in the categories $\cat{Top}$ and $\cat{Cl}$ agree. 
\end{proposition}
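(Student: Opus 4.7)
The plan is to proceed by induction on the total number of cells attached, reducing the claim to a single statement about one cell-attachment pushout at each stage. The base case $X = \varnothing$ is trivial, as the empty space has a unique $\cat{C}$-structure for each $\cat{C} \in \{\cat{Top}, \cat{Cl}, \cat{PsTop}\}$. For the inductive step, I would assume that $X^{n-1}$, built from finitely many cells, has already been shown to be a topological space, and then examine the pushout in $\cat{PsTop}$ used to attach the finitely many $n$-cells.

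The heart of the argument is showing that this one pushout is already topological. In the defining square, $\coprod_j S^{n_j - 1}_j$ and $\coprod_j D^{n_j}_j$ are finite CW complexes built from spheres and disks, and $X^{n-1}$ is a finite CW complex by the inductive hypothesis; thus all three are compact Hausdorff topological spaces, and the left vertical map is a closed inclusion. I would then invoke \cite[Corollary 4.26]{ebel2023synthetic} to conclude that the pushout formed in $\cat{PsTop}$ is again a topological space. Morally the reason is that pushouts of compact Hausdorff spaces along closed cofibrations are particularly well-behaved, so that the finest pseudotopological structure making the cocone maps continuous agrees with the quotient topology. This cited result is the only substantive obstacle; everything else is categorical bookkeeping, and I would treat it as a black box.

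Once the pushout in $\cat{PsTop}$ is known to be a topological space, I would finish as follows. Since $\cat{Top}$ is a full subcategory of $\cat{PsTop}$ by \cref{prop:adjunctions}, any continuous map in $\cat{PsTop}$ between topological spaces is already a morphism in $\cat{Top}$. Hence, given any topological test space $Y$ together with cocone maps out of the diagram, the unique map produced by the $\cat{PsTop}$ pushout property lies in $\cat{Top}$, so the pushout in $\cat{Top}$ agrees with the one computed in $\cat{PsTop}$ at this stage. Iterating, the full finite CW construction yields the same space in both categories, proving the first assertion. The ``in particular'' statement for $\cat{Cl}$ follows by running the identical argument through the full intermediate subcategory $\cat{Cl} \hookrightarrow \cat{PsTop}$ given in \cref{prop:adjunctions}, since a topological pushout is a fortiori a closure space.
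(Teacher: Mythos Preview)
Your proposal is correct and takes essentially the same approach as the paper: both defer the substantive content to \cite[Corollary 4.26]{ebel2023synthetic}, with the paper simply stating that the proposition follows ``immediately'' from that corollary (and noting that the $\cat{Cl}$ case is also covered by \cite[Theorems 1.4 and 1.5]{bubenik2023cw}). Your inductive unpacking via single cell-attachment pushouts is a reasonable elaboration of what the paper leaves implicit, but it is not a different route.
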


The construction of finite CW complexes in $\cat{Top}$ and $\cat{Cl}$ agreeing is also a special case of applying \cite[Theorems 1.4 and 1.5]{bubenik2023cw}.

\subsection{Graphs as closure spaces} 
Here we recall how the categories of simple directed graphs and simple undirected graphs are isomorphic to full subcategories of closure spaces. For a set $X$ let $\Delta_X=\{(x,x)\,|\, x\in X\}$.  

\begin{definition}
    A \textbf{simple directed graph}, or \textbf{digraph} for short, is a pair $(X,E)$ where $X$ is a set and $E$ is a relation on $X$ such that $E\cap \Delta_X=\varnothing$, that is $E$ is not reflexive. A \textbf{simple undirected graph}, or \textbf{graph} for short, is a digraph $(X,E)$ such that $E$ is symmetric. A \textbf{spatial digraph} is a pair $(X,E)$ where $X$ is a set and $E$ is a reflexive relation on $X$, that is $\Delta_X\subset E$. To each simple digraph $(X,E)$ we can associate a spatial digraph $(X,\overline{E})$ where $\overline{E}=E\sqcup \Delta_X$. A \textbf{digraph morphism} $f\colon (X,E)\to (Y,F)$ is a set map $f\colon X\to Y$ such that whenever $x\overline{E}x'$ we have $f(x)\overline{F}f(x')$.
\end{definition}

If $(X,E)$ is a digraph and $A\subset X$, the \textbf{induced subgraph} $(A,E)$ is defined by $aE_Aa'$ if and only if $aEa'$ for all $a,a'\in A$.
We denote the category of digraphs and digraph morphisms by $\cat{DiGph}$. Let $\cat{Gph}$ be the full subcategory of $\cat{DiGph}$ consisting of graphs.

\begin{definition}
    A closure space $(X,c)$ is \textbf{Alexandroff} if $c(A)=\bigcup_{x\in A}c(x)$ for all $A\subset X$. A closure space $(X,c)$ is \textbf{symmetric} if whenever $y\in c(x)$, we have $x\in c(y)$. Let $\cat{Cl_a}$, $\cat{Cl_{sA}}$ denote the full subcategories of $\cat{Cl}$ consisting of Alexandroff and symmetric Alexandroff closure spaces, respectively.
\end{definition}

To each digraph $(X,E)$ we can associate an Alexandroff closure space $(X,c_E)$ defined by $c_E(x)=\{y\in X\,|\, xEy\}$ and $c_E(A)=\bigcup_{x\in A}c_E(x)$. Obviously if $E$ was symmetric, then $(X,c_E)$ will be a symmetric Alexandroff closure space. Conversely, given an Alexandroff closure space $(X,c)$ we can associate with it a digraph $(X,E_c)$ defined by $xE_cy$ if and only if $y\in c(x)$ and if $(X,c)$ was a symmetric Alexandroff closure space then $E_c$ will also be symmetric. These assignments are functorial and furthemore are inverses to one-another. In particular, if $(X,E)$ is a digraph and $A\subset X$ then the induced subgraph $(A,E_A)$ corresponds to the subspace closure on $A$ induced from $(X,c_E)$.
In other words we have the following:

\begin{proposition}
    The categories $\cat{Cl_A}$ and $\cat{DiGph}$ are isomorphic. This isomorphism also restricts to the subcategories $\cat{Cl_{sA}}$ and $\cat{Gph}$. 
\end{proposition}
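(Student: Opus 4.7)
The plan is to exhibit explicit mutually-inverse functors $F\colon \cat{DiGph}\to \cat{Cl_A}$ and $G\colon \cat{Cl_A}\to \cat{DiGph}$ implementing the two assignments described in the paragraph preceding the proposition. On objects, I would first verify that $c_E(A)\defeq \bigcup_{x\in A}\{y : x\overline{E}y\}$ satisfies the \v{C}ech closure axioms: reflexivity $A\subset c_E(A)$ uses $\Delta_X \subset \overline{E}$, additivity is immediate from the pointwise formula, and $c_E(\varnothing)=\varnothing$ is trivial; the closure is Alexandroff by construction. In the other direction, $E_c\defeq \{(x,y) : y\in c(x),\ x\neq y\}$ is plainly an irreflexive relation on $X$, yielding a digraph.

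The essential content is the morphism correspondence: for a set map $f\colon X\to Y$, $f$ is a digraph morphism $(X,E)\to (Y,F)$ if and only if $f\colon (X,c_E)\to (Y,c_F)$ is continuous. For the forward direction, given $A\subset X$ and $f(z)\in f(c_E(A))$ with $z\in c_E(x)$ for some $x\in A$, the relation $x\overline{E}z$ implies $f(x)\overline{F}f(z)$, hence $f(z)\in c_F(\{f(x)\})\subset c_F(f(A))$. Conversely, if $x\overline{E}x'$ then $x'\in c_E(\{x\})$, so continuity gives $f(x')\in c_F(\{f(x)\}) = \{y : f(x)\overline{F}y\}$, i.e., $f(x)\overline{F}f(x')$. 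Since $F$ and $G$ act as the identity on the underlying set map, functoriality is automatic, and one checks mutual inverseness on objects by direct unpacking: $E_{c_E}=E$ by definition, and $c_{E_c}(x)=c(x)$ because $(X,c)$ is Alexandroff, so $c(A)=\bigcup_{a\in A}c(a)$ is determined by its values on singletons, while $x\in c(x)$ handles the diagonal removed in passing to $E_c$.

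For the restriction claim, $E$ is symmetric exactly when $y\in c_E(x) \Longleftrightarrow x\in c_E(y)$ holds for all $x,y\in X$, which is the definition of symmetry for the associated closure space; thus $F$ and $G$ restrict to mutually inverse functors between $\cat{Gph}$ and $\cat{Cl_{sA}}$. The argument has no genuine obstacles; its only subtlety is consistent bookkeeping around the convention $\overline{E}=E\sqcup \Delta_X$ (needed so that $c_E$ is reflexive even though $E$ is irreflexive), together with the fact that digraph morphisms are allowed to collapse edges to vertices through the use of $\overline{F}$ rather than $F$ on the target side, which matches precisely the closure-theoretic statement $f(x')\in c_F(\{f(x)\})$ allowing $f(x)=f(x')$.
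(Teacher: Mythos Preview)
Your proposal is correct and follows exactly the approach the paper indicates: the paper merely states that the two assignments described before the proposition are functorial and mutually inverse, and defers the verification with ``The proof follows from definitions''; you have simply written out those definition-chasing verifications in full. The only minor point is that the paper's formula $c_E(x)=\{y\in X\mid xEy\}$ tacitly uses the spatial relation $\overline{E}$ (as you correctly do), so your bookkeeping around $\overline{E}=E\sqcup\Delta_X$ is precisely the care needed to make the paper's sketch rigorous.
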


The proof follows from definitions, see \cite{bubenik2024homotopy,dikranjan2013categorical} for example.

\begin{remark}
    \label{remark:finite_pseudotopological_spaces}
    Note that every finite pseudotopological space is in fact a finite closure space and hence equivalent to a digraph. Indeed, let $(X,\Lambda)$ be a finite pseudotopological space. Since $X$ is a finite set, for each $x\in X$, the collection $\{\lambda\,|\lambda\to x\}$ is finite. Because every pseudotopological space is a limit space, we have that for all $x\in X$, $\mathcal{U}(x)=\bigcap\{\lambda\,|\,\lambda\to x\}\to x$ which by \cref{prop:alternative_char_of_closure_spaces} shows that $(X,\Lambda)$ is a closure space.
\end{remark}

\section{Homotopy and homology for pseudotopological spaces}
\label{section:homotopy_and_homology}

In this section, we develop the basics of homotopy and singular homology groups of pseudotopological spaces.  Most of the results, if not the proofs, are completely analogous to the standard results about homotopy and singular homology groups of topological spaces.  Of particular interest in this section is a pseudotopological version of the Quillen fiber lemma, \cref{corollary:weak_hom_eq_on_inter_cover}, which plays a key role in our main result, \cref{theorem:vr_weak_homotopy_equivalence}.  

One of the main results of \cite{rieser2022cofibration} for our purposes is that $\cat{PsTop}$ is an I-category (\cite[Corollary 4.23]{rieser2022cofibration}).  In particular, this means that $\cat{PsTop}$ has a cylinder object given by the obvious cylinder construction; that is, for any pseudotopological space $X$, the \textbf{cylinder object} is given by $(X\times [0,1], i_0, i_1,p)$ where $[0,1]=I$ is the topological unit interval, $i_0, i_1\colon X \to  X\times [0,1]$ by $i_0(x)=(x,0)$ and $i_1(x)=(x,1)$ are \textbf{inclusions}, and $p\colon X\times I \to X$ by $p(x,t)=x$ is the \textbf{projection}. 

\subsection{\texorpdfstring{$\pi_0$}{pi\_0} in \texorpdfstring{$\cat{PsTop}$}{PsTop}}
Let $X$ be a pseudotopological space. A \textbf{path} from $x$ to $y$ in $X$ is a continuous map $\alpha\colon I\to X$ such that $\alpha(0)=x$ and $\alpha(1)=y$. If $\alpha$ is a path, its \textbf{inverse path} is the map $\overline{\alpha}\colon I\to X$ defined by $t\mapsto 1-t$. If $\alpha$ and $\beta$ are two paths in $X$ such that $\alpha(1)=\beta(0)$, the \textbf{concatenation} of $\alpha$ and $\beta$, $\alpha* \beta$, is path in $X$ from $\alpha(0)$ to $\beta(1)$ defined by 
\[\alpha* \beta(t)=\begin{cases}
    \alpha(2t),& 0\le t\le\frac{1}{2}\\
    \beta(2t-1), & \frac{1}{2}\le t\le 1
\end{cases}.\]
We put a relation on $X$ by saying $x\sim y$ if there is a path from $x$ to $y$. By reversing and concatenating paths, which is made possible by \cref{prop:pasting_lemma,prop:finite_cw_complexes_are_topological}, it follows that $\sim$ is an equivalence relation on $X$. The equivalence classes of $\sim$ are called the \textbf{path components} of $X$. We write $\pi_0(X)$ for the set of path components of $X$. We say $X$ is \textbf{path-connected} or $0$-\textbf{connected} if $\pi_0(X)$ is a singleton. Equivalently, $X$ is path connected if there is a path between any pair of points in $X$.

\subsection{Homotopies of maps in \texorpdfstring{$\cat{PsTop}$}{PsTop}}
\label{section:homotopies}
This existence of a cylinder object yields the definition of homotopy as the obvious one. That is, continuous maps $f,g\colon X\to Y$ in $\cat{PsTop}$ are \textbf{homotopic}, and we write $f\simeq g$ if there exists a continuous $H\colon X\times I\to Y$ such that $H\circ i_0=f$ and $H\circ i_1=g$ (equiv. $H(x,0)=f(x)$ and $H(x,1)=g(x)$). The map $H$ is called a \textbf{homotopy} between $f$ and $g$, denoted $H\colon f\simeq g$. Given $H\colon f\simeq g$, the \textbf{inverse homotopy} $-H\colon X\times I\to Y$ is the map $(x,t)\mapsto (x,1-t)$. We have $-H\colon g \simeq f$. If $H\colon f\simeq g$ and $L\colon g\simeq h$ are given, the \textbf{glued homotopy} $H+L\colon X\times I\to Y$ is given by 
\[H+L(x,t)=\begin{cases}
    H(x,2t), & 0\le t\le \frac{1}{2}\\
    L(x,2t-1), & \frac{1}{2}\le t\le 1
\end{cases},\]
and we have $H+L\colon f\simeq h$.
This is indeed a continuous map by \cref{prop:pasting_lemma,prop:finite_cw_complexes_are_topological}. This was also argued more directly in \cite[Section 3.2.1]{rieser2022cofibration}. It follows that $\simeq$ is an equivalence relation on $\textnormal{Hom}_{\cat{PsTop}}(X,Y)$. For a continuous map $f\colon X\to Y$ its equivalence class is denoted by $[f]$ and is called the \textbf{homotopy class} of $f$. The set of homotopy classes of maps $f\colon X\to Y$ is denoted by $[X,Y]$. For a subspace $A\subset X$, we say a homotopy $H\colon X\times I\to Y$ is \textbf{relative} to $A$ if for all $t\in I$, we have $H(x,t)=f(x)=g(x)$ for all $x\in A$. That is, the homotopy $H$ is constant on $A$. In this case we write $H\colon f\simeq g\textnormal{ (rel A)}$. We will use $H_t$ to denote the map $H_t\colon X\to Y$ defined by $H_t(x)=H(x,t)$.

We say two pseudotopological spaces $X$ and $Y$ are \textbf{homotopy equivalent} or have the same \textbf{homotopy type} if there exist continuous maps $f\colon X\to Y$ and $g\colon Y\to X$ such that $fg\simeq \id_{Y}$ and $gf\simeq \id_{X}$. If $X$ and $Y$ are homotopy equivalent via maps $f\colon X\to Y$ and $g\colon Y\to X$ we call $f$ and $g$ \textbf{homotopy equivalences} and we say $g$ is a \textbf{homotopy inverse} of $f$. A map $f\colon X\to Y$ is called a \textbf{null homotopic} if it is homotopic to a constant map and in this case the homotopy is called a \textbf{null homotopy}. For a subspace $A$ of $X$ a map $r\colon X\to X$ is a \textbf{retraction} of $X$ onto $A$ if $r(X)=A$ and $r|_A=\id_{A}$, and we say $A$ is a \textbf{retract} of $X$. A \textbf{deformation retraction} is a homotopy between the identity map on $X$ and a retraction $r$. In this case, we say $X$ \textbf{deformation retracts onto $A$}. If a deformation retraction is a homotopy relative to $A$, we say it is a \textbf{strong deformation retraction}. A space $X$ is \textbf{contractible} if $X$ deformation retracts to a one point space and a null homotopy of $\id_{X}$ is called a \textbf{contraction}.

\begin{lemma}
\label{lemma:only_nbhd_whole_space_then_contractible}
    Let $Y$ be a closure space containing a point $a$ such that the only neighborhood in $Y$ containing $a$ is $Y$ itself. Then $Y$ is contractible.
\end{lemma}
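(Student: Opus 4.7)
The plan is to exhibit an explicit null homotopy of $\id_Y$, using the coarseness of the neighborhood structure at $a$ to absorb any potential discontinuity. I define
\[
H\colon Y\times I \to Y, \qquad H(y,t)=\begin{cases} y & \text{if } t\in [0,1),\\ a & \text{if } t=1. \end{cases}
\]
Then $H(-,0)=\id_Y$ and $H(-,1)$ is the constant map $y\mapsto a$, so everything reduces to verifying that $H$ is continuous as a map of closure spaces, since $Y\times I$ is the product closure space and finite CW products such as $Y\times I$ are computed the same way in $\cat{Cl}$ and $\cat{PsTop}$ (via \cref{prop:finite_cw_complexes_are_topological} applied to $I$).

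To verify continuity, I would use \cref{theorem:continuity_in_terms_of_nbhds} and check at each $(y,t)\in Y\times I$ that for every neighborhood $V$ of $H(y,t)$ in $Y$, the preimage $H^{-1}(V)$ is a neighborhood of $(y,t)$ in $Y\times I$. For $t\in [0,1)$ we have $H(y,t)=y$, and if $V$ is any neighborhood of $y$ in $Y$ then $V\times [0,1)\subset H^{-1}(V)$; since $V$ is a neighborhood of $y$ in $Y$ and $[0,1)$ is a neighborhood of $t$ in $I$, the product $V\times [0,1)$ is a basic neighborhood of $(y,t)$ in the product closure space (using the description of product neighborhoods as finite intersections of projection preimages of neighborhoods), so $H^{-1}(V)$ is itself a neighborhood of $(y,t)$. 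For $t=1$ we have $H(y,1)=a$, and by hypothesis the only neighborhood of $a$ is $Y$ itself, so the only case to check is $V=Y$, for which $H^{-1}(Y)=Y\times I$ is trivially a neighborhood of $(y,1)$.

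Thus $H$ is continuous at every point, hence continuous, and gives a contraction of $Y$ onto $\{a\}$. The only real point of delicacy in the argument is the case $t=1$, which is precisely where the definition of $H$ jumps; the hypothesis that $Y$ is the only neighborhood of $a$ is exactly what is needed to kill this jump, since it forces any demand for a neighborhood of $H(y,1)=a$ to be vacuously satisfied. Everywhere else the map agrees with a projection and continuity is immediate from the product structure.
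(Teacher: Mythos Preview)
Your proof is correct and essentially identical to the paper's: the same explicit homotopy $H$ is defined, and continuity is verified pointwise via \cref{theorem:continuity_in_terms_of_nbhds} with the same two-case split. The only quibble is the aside invoking \cref{prop:finite_cw_complexes_are_topological} for $Y\times I$, which is misplaced since $Y$ need not be a CW complex; the product $Y\times I$ is simply the product closure space and no further justification is needed.
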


\begin{proof}
    Define $H\colon Y\times I\to X$ by $H(y,t)=\begin{cases}
    y, & 0\le t<1,\\
    a, &t=1
    \end{cases}$. We need to show that $H$ is continuous. Let $(y,t)\in Y\times I$ and let $U$ be a neighborhood of $H(y,t)$ in $Y$. There are two cases to consider.

    \textbf{Case 1:} Suppose that $t=1$ or $y=a$. Then $H(y,t)=a$ and $U=Y$ by assumption. Thus $Y\times I$ is a neighborhood of $(y,t)$ such that $H(Y\times I)\subset Y=U$.

    \textbf{Case 2:} Suppose that $t<1$. Then $H(y,t)=y$ and $H^{-1}(U)=U\times [0,1)$ is a neighborhood of $(y,t)$ in $Y\times I$ such that $H(U\times [0,1])\subset U$.

    Thus, $H$ is continuous by \cref{theorem:continuity_in_terms_of_nbhds}.
\end{proof}

\subsection{Mapping cylinder in \texorpdfstring{$\cat{PsTop}$}{PsTop}}

Here we define the mapping cylinder in $\cat{PsTop}$. We will be using the fact that $\cat{PsTop}$ is Cartesian closed.

\begin{definition}
\label{def:mapping_cylinder}
Let $f\colon X\to Y$ be continuous between pseudotopological spaces. The \textbf{mapping cylinder} of $f$, denoted $M_f$, is given by the pushout of the diagram
$$
\xymatrix{
X\ar[r]^f\ar[d]^{i_0} & Y \ar@{-->}[d]^j\\
X\times I \ar@{-->}[r]^g& M_f
}
$$
where $i_0(x)=(x,0)$.  The mapping cylinder is explicitly given by 
$$M_f=((X\times I)\sqcup Y)/((x,0)\sim f(x))
$$
$j(y)=[y]$, and $g(x,t)=[x,t].$
\end{definition}

\begin{proposition}
\label{prop: mapping cylinder}
Let $f\colon X\to Y$ in $\cat{PsTop}$.  Then $f$ factors as $f=r\circ i$ where $i\colon X\to M_f$, is the inclusion given by composition
\[X\xrightarrow{i_1}X\times I\xrightarrow{g}M_f,\]
$i_1(x)=(x,1)$, and $r\colon M_f\to Y$, $r([x,t])=f(x)$ and $r([y])=y$, is a homotopy equivalence.
\end{proposition}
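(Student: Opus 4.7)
The plan is to construct $r$ via the universal property of the pushout, then exhibit the inclusion $j\colon Y\to M_f$ as a homotopy inverse of $r$. First, to produce $r\colon M_f\to Y$, I would apply the universal property of the pushout in \cref{def:mapping_cylinder} to the cocone consisting of $\id_Y\colon Y\to Y$ and $f\circ p\colon X\times I\to Y$, where $p\colon X\times I\to X$ is the projection. These agree on $X$ since $f\circ p\circ i_0(x)=f(x)=\id_Y\circ f(x)$, so we obtain a unique continuous $r\colon M_f\to Y$ with $r([y])=y$ and $r([x,t])=f(x)$. The factorization $f=r\circ i$ is then immediate from $r(i(x))=r(g(x,1))=r([x,1])=f(x)$.

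Next, note that $r\circ j=\id_Y$ by direct computation. The heart of the argument is building a homotopy $H\colon M_f\times I\to M_f$ from $\id_{M_f}$ to $j\circ r$. The idea is to ``slide the cylinder coordinate down to zero'', i.e., $H([x,s],t)=[x,s(1-t)]$ and $H([y],t)=[y]$. To build this rigorously, I would invoke Cartesian closedness of $\cat{PsTop}$ (\cref{example:function_spaces}(3)): since $-\times I$ has a right adjoint, it preserves colimits and in particular pushouts. Therefore $M_f\times I$ is the pushout of $(X\times I)\times I\xleftarrow{i_0\times\id_I}X\times I\xrightarrow{f\times\id_I}Y\times I$, and a continuous map out of $M_f\times I$ is specified by a pair of continuous maps out of $Y\times I$ and $(X\times I)\times I$ that agree on $X\times I$.

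I would take the two component maps to be $\tilde H_Y\colon Y\times I\to M_f$, $(y,t)\mapsto [y]$ (which is the composition $j\circ p_Y$ and hence continuous), and $\tilde H_{X\times I}\colon (X\times I)\times I\to M_f$, $((x,s),t)\mapsto [x,s(1-t)]$, which factors as the clearly continuous map $(x,s,t)\mapsto (x,s(1-t))$ into $X\times I$ followed by the pushout coprojection $g$. Compatibility on $X\times I$ reduces to the identity $[f(x)]=[x,0]$ in $M_f$, which holds by construction of the quotient. The universal property then delivers a continuous $H\colon M_f\times I\to M_f$ with $H_0=\id_{M_f}$ (since $s(1-0)=s$) and $H_1=j\circ r$ (since $[x,0]=[f(x)]=j(r([x,s]))$ and $[y]=j(r([y]))$). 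Combined with $r\circ j=\id_Y$, this shows $r$ is a homotopy equivalence.

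The main potential obstacle is verifying that the homotopy $H$ is genuinely continuous as a map out of $M_f\times I$. In ordinary topology one often appeals to the fact that multiplying by a locally compact space preserves quotients, which fails in $\cat{Cl}$ (see \cref{remark:exponential_objects}); this is precisely the ``technical reason'' hinted at in the introduction for working in $\cat{PsTop}$ rather than in $\cat{Cl}$. The Cartesian closedness of $\cat{PsTop}$ bypasses this issue cleanly by ensuring $-\times I$ preserves the pushout defining $M_f$, so no extra regularity hypotheses are needed.
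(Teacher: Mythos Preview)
Your proposal is correct and follows essentially the same route as the paper: construct $r$ from the pushout universal property via $\id_Y$ and $f\circ p$, observe $r\circ j=\id_Y$, then use Cartesian closedness of $\cat{PsTop}$ so that $-\times I$ preserves the pushout defining $M_f$, and assemble the homotopy $H([x,s],t)=[x,s(1-t)]$, $H([y],t)=[y]$ from the two compatible legs. Your closing paragraph about why this needs $\cat{PsTop}$ rather than $\cat{Cl}$ is exactly the content of \cref{remark:no_mapping_cylinder_in_cl}.
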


\begin{proof}
Note that $i\colon X\to M_f$ is given by $i(x)=[x,1]$. Clearly $f=r\circ i$. We first check that $r$ is indeed continuous. Let $k\colon X\times I\to Y$ be the map $(x,t)\mapsto f(x)$. Note that $k$ factors as $k=f\circ \pi_X$, where $\pi_X\colon X\times I\to X$ is the canonical projection, and thus $k$ is continuous. Note that $r$ is the unique continuous map induced by the pushout $M_f$ and the maps $h\colon X\times I\to Y$ and $\mathrm{id}_Y\colon Y\to Y$.

$$
\xymatrix{
X\ar[r]^f\ar[d] & Y \ar[d]^j \ar@/^/[rdd]^{\mathrm{id}_Y}\\
X\times I \ar[r] \ar@/_/[drr]^k & M_f \ar@{-->}[rd]^r\\
&&Y
}
$$

From the commutative diagram above, we have that $r\circ j=\id_Y$. Thus, we only need to show $j\circ r \simeq \id_{M_f}$. 

Since $\cat{PsTop}$ is Cartesian closed, the unit interval $I$ is an exponentiable object, i.e., the functor $-\times I\colon \cat{PsTop}\to \cat{PsTop}$ is a left adjoint (\cref{example:function_spaces}(3)). Therefore 
$$
\xymatrix{
X\times I\ar[rr]^{f\times \mathrm{id}_I}\ar[dd]^{i_0\times \mathrm{id}_I} & &Y\times I \ar@{-->}[dd]^{j\times \mathrm{id}_I}\\
\\
X\times I\times I \ar@{-->}[rr]^{g\times \mathrm{id}_I}&& M_f\times I
}
$$
is also a pushout.  Define $m\colon X\times I \times I\to M_f$ by $m(x,t,s)=[x,t(1-s)]$ and $\ell\colon Y\times I \to M_f$ by $\ell(y,s)=[y]$.  Write write $m=g\circ m' \colon X\times I\times I\to X\times I\to M_f$ where $m'$ is the map $(t,s)\to t(1-s)$. Then $m'$ is a map between topological spaces $I\times I\to I$ and is known to be continuous. Similarly, we can write $\ell$ as a composition $\ell=j\circ\pi_Y$, where $\pi_Y\colon Y\times I\to Y$ is the canonical projection, yielding that $\ell$ is also continuous.  Furthermore, $m\circ (i_0\times \id_I)=\ell\circ (f\times \id_I)$ so that there is a unique continuous map $H\colon M_f\times I \to M_f$ given by the pushout:

$$
\xymatrix{
X\times I\ar[rr]^{f\times \mathrm{id}_I}\ar[dd]^{i_0\times \mathrm{id}_I} & &Y\times I \ar[dd]_{j\times \mathrm{id}_I} \ar@/^/[dddr]^{\ell}&&\\
\\
X\times I\times I \ar[rr]^{g\times \mathrm{id}_I} \ar@/_/[rrrd]^m && M_f\times I\ar@{-->}[rd]^H&&\\
&&& M_f\\
}
$$

Observe that
\begin{eqnarray*}
H([x,t],0)&=&[x,t]=\id_{M_f}[x,t]\\
H([y],0)&=&[y]=\id_{M_f}[y]
\end{eqnarray*}

and

\begin{eqnarray*}
H([x,t],1)&=&[x,0]=[f(x)]=j(f(x))=j\circ r([x,t])\\
H([y],1)&=&[y]=j(y)=j\circ r([y]).
\end{eqnarray*}

Therefore $H$ is a homotopy and thus
 $j\circ r\simeq \id_{M_f}.$
\end{proof}

\begin{remark}
\label{remark:no_mapping_cylinder_in_cl}    
Since $I$ is not an exponentiable object in $\cat{Cl}$ (\cref{remark:exponential_objects}), we cannot guarantee the existence of mapping cylinders in $\cat{Cl}$. More precisely, the pushout $M_f$ still exists as a closure space as $\cat{Cl}$ is cocomplete. However in arguing for the homotopy $H\colon M_f\times I\to M_f$ between $\id_{M_f}$ and $r\colon M_f\to Y$ being continuous, in $\cat{PsTop}$ we used the fact that product with $I$ preserves pushouts. We cannot do this in $\cat{Cl}$. Mapping cylinders will be of great use in our proofs and thus the need to work in $\cat{PsTop}$ instead of $\cat{Cl}$.
\end{remark}

\subsection{Homotopy groups of pseudotopological spaces}

We now define homotopy groups of pseudotopological spaces and derive the long exact sequence of homotopy groups.  

We call a pair $(X,x_0)$ consisting of a pseudotopological space $X$ and a \textbf{base point} $x_0\in X$ a \textbf{pointed space}. A \textbf{pointed map} $f\colon (X,x_0)\to (Y,y_0)$ is a continuous map $f\colon X\to Y$ such that $f(x_0)=y_0$. A homotopy $H\colon f\simeq g$ is \textbf{pointed} if $H_t$ is pointed for all $t\in I$. Denote the set of pointed homotopy classes by $[(X,x_0),(Y,y_0)]$ or by $[X,Y]^0$ (with fixed base points assumed). We can analogously define concepts such as
\textbf{pointed homotopy equivalence}, \textbf{pointed contractible} and \textbf{pointed null homotopy} for instance.

A \textbf{pair} $(X,A)$ \textbf{of pseudotopological space} consists of a pseudotopological space $X$ with a subspace $A$ of $X$. A \textbf{continuous map of pairs} $f\colon (X,A)\to (Y,B)$ is a continuous map $f\colon X\to Y$ such that $f(A)\subset B$. A \textbf{homotopy of pairs} is then a homotopy $H\colon X\times I\to Y$ such that $H_t$ is a map of pairs for all $t\in I$. We write $[(X,A),(Y,B)]$ for the homotopy classes of maps of pairs. Note that if $X$ is a closure space (resp. topological space), then $A$ is a closure space (resp. topological space) as the categories $\cat{Cl}$ and $\cat{Top}$ are reflective subcategories of $\cat{PsTop}$ (\cref{remark:colimits}).

\begin{lemma}
\label{lemma:homotopy_quotients}
    Let $q\colon X\to Y$ be a quotient map of pseudotopological spaces. Suppose $\{H_t\colon Y\to Z\}_{t\in I}$ is a family of functions (not necessarily continuous) such that the collection  $H_t\circ q$, $t\in I$, is a homotopy. Then the family $H_t$ is also a homotopy.
\end{lemma}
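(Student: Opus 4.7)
The plan is to recognize this as a routine application of the universal property of a quotient map together with the fact that in a Cartesian closed category, products with a fixed object preserve colimits, in particular preserve quotient maps. Concretely, I define $\tilde{H}\colon Y\times I\to Z$ by $\tilde{H}(y,t)=H_t(y)$. This is a well-defined set function, and the hypothesis ``$H_t\circ q$ is a homotopy'' says precisely that the composite map
\[
X\times I \xrightarrow{\;q\times \id_I\;} Y\times I \xrightarrow{\;\tilde{H}\;} Z
\]
equals the continuous map $H\colon X\times I \to Z$ with $H(x,t)=H_t(q(x))$. So I just need to conclude continuity of $\tilde{H}$ from continuity of $\tilde{H}\circ (q\times \id_I)$.

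The key input is that $q\times \id_I\colon X\times I \to Y\times I$ is itself a quotient map in $\cat{PsTop}$. This follows because $\cat{PsTop}$ is Cartesian closed (\cref{example:function_spaces}(3)), so the endofunctor $-\times I$ is a left adjoint and therefore preserves all colimits. A quotient map in $\cat{PsTop}$ is, up to identifying $Y$ with the quotient set carrying the finest pseudotopological structure making $q$ continuous, a coequalizer (of the kernel pair of $q$); preservation of coequalizers under $-\times I$ then gives that $q\times \id_I$ is itself a coequalizer, i.e.\ a quotient map. Equivalently, the structure on $Y\times I$ is already the finest making $q\times \id_I$ continuous, so any set map out of $Y\times I$ whose precomposition with $q\times \id_I$ is continuous must itself be continuous.

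Applying this universal property to $\tilde{H}$ yields that $\tilde{H}\colon Y\times I\to Z$ is continuous. Since $\tilde{H}(y,0)=H_0(y)$ and $\tilde{H}(y,1)=H_1(y)$, the family $\{H_t\}_{t\in I}$ is therefore a homotopy in the sense of \cref{section:homotopies}.

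The only real obstacle is verifying that quotient maps in $\cat{PsTop}$ are stable under product with $I$; everything else is formal. This stability is exactly where Cartesian closedness of $\cat{PsTop}$ is essential, and it is the same reason (cf.\ \cref{remark:no_mapping_cylinder_in_cl}) that the argument would not port directly to $\cat{Cl}$, where $I$ fails to be exponentiable and so $-\times I$ need not preserve colimits.
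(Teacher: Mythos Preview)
Your proposal is correct and follows essentially the same approach as the paper: both argue that $q\times\id_I$ is a quotient map because $\cat{PsTop}$ is Cartesian closed (so $-\times I$ preserves colimits), and then invoke the universal property of quotients to conclude continuity of the map $Y\times I\to Z$. Your write-up is in fact more careful and explicit than the paper's, which compresses the same argument into three sentences.
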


\begin{proof}
    The product $q\times \id_{X}\colon X\times I\to Y\times I$ is a quotient map. This is because a quotient map is an example of a colimit and $\cat{PsTop}$ is Cartesian closed  and thus is  $I$ is an  exponentiable object and therefore it preserves colimits. The composition $H\circ (q\times \id_{X})$ is continuous by assumption and thus by colimit property of quotients, the map $H$ must be continuous as well.
\end{proof}

A continuous map of pairs $f\colon (X,A)\to (Y,y_0)$ into a pointed space induces a pointed map $\overline{f}\colon X/A\to Y$. Furthermore, by \cref{lemma:homotopy_quotients} the assignment $f\mapsto \overline{f}$ induces a bijection 
\[[(X,A),(Y,y_0)]\cong [X/A,Y]^0.\]
With this in mind, when we have a pair $(X,A)$ we consider the quotient space $X/A$ as a pointed space (with $A$ identified with a point) with base point $\{A\}$. If $A=\varnothing$, then $X/A=X^+$ is the space $X$ with a separate base point.

Let $I^n$ be the $n$-fold product of $I$ with itself. Denote by $\partial I^n$ the set $\partial I^n=\{(t_1,\dots, t_n)\in I^n \colon t_i\in \{0,1\} \text{ for at least one } i\}$, with the subspace structure induced from $I^n$, for $n\geq 1$. We set $I^0=\{0\}$, and $\partial I^0=\varnothing$. In $I^n/\partial I^n$ we use $\partial I^n$ as the base point. For $n=0$ we have $I^0/\partial I^0=\{0\}\sqcup \{*\}$, that is we get an additional disjoint base point $*$. 

The $n$-th homotopy group, $n\ge 1$, of a pointed pseudotopological space $(X,x_0)$ is the set of relative homotopy classes of maps
\[\pi_n(X,x_0)=[(I^n,\partial I^n),(X,x_0)],\]
with the following group structure. Suppose that $[f],[g]\in \pi_n(X,x_0)$. Then a representative of $[f]+[g]$ is $f+_ig$ where 
\begin{equation}
\label{eq:homotopy_group_operation}
    (f+_ig)(t_1,\dots, t_n)=\begin{cases}
    f(t_1,\dots,t_{i-1},2t_i,\dots,t_n),& 0\le t_i\le \frac{1}{2}\\
    g(t_1,\dots,t_{i-1},2t_{i}-1,\dots,t_n)& \frac{1}{2}\le t_i\le 1
\end{cases}.
\end{equation}
This construction extends the usual definition of homotopy groups for topological spaces.  Furthermore, the proof of \cite[Propositon 6.1.1]{tomDieck2008} applies verbatim and it follows that for $n\ge 2$, $\pi_n(X,x_0)$ are abelian groups and that the definition of $[f]+[g]$ does not depend on the choice of $i$, $1\le i\le n$.
Finally, if $X$ is a path connected pseudotopological space, then $\pi_n(X,x_0)$ does not depend on the base point $x_0$ and we write $\pi_n(X)$ instead.
For $n=0$, $\pi_0(X,x_0)$ is the set of path components of $X$, $\pi_0(X)$, with base point at $[x_0]$.

\subsection{Relative homotopy groups}
We now define relative homotopy groups (sets) for a pointed pair $(X,A)$, i.e., a pair of spaces $(X,A)$ with a base point $x_0\in A\subset X$. For $n\ge 1$, let $J^n=\partial I^n\times I\cup I^n\times \{0\}\subset \partial I^{n+1} \subset I^{n+1}$ as a subspace of $I^{n+1}$. Since $I^{n+1}$ is topological, so is $J^n$. Let $J^0=\{0\}\subset I$. Then define
\[\pi_{n+1}(X,A,x_0)=[(I^{n+1},\partial I^{n+1}, J^n),(X,A,x_0)].\]
In other words, $\pi_{n+1}(X,A,x_0)$ is set of homotopy classes of maps of triples. A map of triples $f\colon (I^{n+1},\partial I^{n+1},J^n)\to (X,A,x_0)$ means that $f\colon I^{n+1}\to X$ is a map such that $f(\partial I^{n+1})\subset A$ and $f(J^n)\subset \{x_0\}$. A homotopy of maps of triples is a homotopy $H$ such that each $H_t$ is also a map of triples for all $t\in I$. For $n\ge 1$, $\pi_{n+1}$ has a group structure, the same one as defined in \eqref{eq:homotopy_group_operation}. For $n=0$, there is no group structure. 

\begin{lemma}
\label{lemma:cube_horns}
The space $J^n$ is a deformation retraction on $\topint{n+1}$. Given a set of faces $F$ of $\topint{n-1}$ and their union $J_F^{n-1}$, $J_F^{n-1}\subset \topint{n}$ is the inclusion of a deformation retraction. 
\end{lemma}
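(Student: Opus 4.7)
The plan is to reduce both statements to classical algebraic topology. Every space in sight is built from the topological unit interval $I$ by finite products, finite unions of closed subsets, and subspace structures. By \cref{prop:adjunctions} and \cref{remark:colimits}, $\cat{Top}$ is a reflective subcategory of $\cat{PsTop}$ whose inclusion preserves limits, so finite products and subspaces of topological spaces remain topological; the finite unions appearing in $J^n$ and $J_F^{n-1}$ are subspaces of a product $I^{n+1}$ or $I^n$ and are therefore topological as well. Because $\cat{Top}\hookrightarrow\cat{PsTop}$ is a full subcategory, the continuous maps and homotopies in $\cat{PsTop}$ between these spaces coincide with the classical topological ones. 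Consequently, it is enough to prove both claims in $\cat{Top}$.

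For the first statement, I would build a strong deformation retraction by radial projection from a point above the missing face. Fix $p=(\tfrac12,\dots,\tfrac12,2)\in\mathbb R^{n+1}$, which lies directly above the center of $I^n\times\{1\}$, the unique face of $\partial I^{n+1}$ not contained in $J^n$. For each $x\in I^{n+1}$ the ray from $p$ through $x$ first meets $J^n$ at a unique point $r(x)$: the smallest $s\ge 0$ with $p+s(x-p)\in J^n$. This map $r\colon I^{n+1}\to J^n$ is continuous, and $r|_{J^n}=\id_{J^n}$. Since $I^{n+1}$ is convex, the straight-line homotopy
\[
H(x,t)=(1-t)x+t\,r(x)
\]
takes values in $I^{n+1}$, is continuous, fixes $J^n$ pointwise, and connects $\id_{I^{n+1}}$ to the retraction onto $J^n$. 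This exhibits $J^n$ as a strong deformation retract of $I^{n+1}$.

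For the second statement, the strategy is the same, but the projection point is adapted to $F$. I will read the hypothesis as requiring that the union $J_F^{n-1}\subseteq\partial I^n$ is a proper subcomplex containing no pair of opposite codimension-one faces of $I^n$ (this last condition is necessary: two opposite faces form an $S^0$, which cannot be a retract of the contractible space $I^n$). Choose a coordinate $i\in\{1,\dots,n\}$ and a sign $\varepsilon\in\{0,1\}$ so that the face $I^{i-1}\times\{\varepsilon\}\times I^{n-i}$ is absent from $F$, and project radially from a point $p\in\mathbb R^n$ on the outer side of that missing face — e.g.\ $p_j=\tfrac12$ for $j\neq i$, with $p_i=-1$ if $\varepsilon=0$ or $p_i=2$ if $\varepsilon=1$. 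Defining $r$ and $H$ exactly as before yields a strong deformation retraction of $I^n$ onto $J_F^{n-1}$.

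The only nontrivial bit is verifying continuity of the exit-point map $r$, which is standard convex-geometric bookkeeping (the exit time $s(x)$ depends continuously on $x$ because $J^n$, respectively $J_F^{n-1}$, is closed and the ray is transverse to it away from boundaries). I expect the main obstacle to be stating the combinatorial condition on $F$ cleanly and checking it in the uses the lemma will be put to; the geometric argument itself is routine once the reduction to $\cat{Top}$ is made.
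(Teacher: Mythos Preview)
Your argument for the first statement is correct and is essentially the paper's: both use radial projection onto $J^n$ followed by the straight-line homotopy. The paper projects from the centre $a=(\tfrac12,\dots,\tfrac12,1)$ of the missing top face rather than from an exterior point $p=(\tfrac12,\dots,\tfrac12,2)$, but this is an inessential variant of the same idea.

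For the second statement you are right that extra hypotheses on $F$ are required (as printed the claim is false for $F=\varnothing$ or $F$ a pair of opposite faces), and the paper's one-line ``similarly'' does not address this. However, your proposed construction has a genuine gap. Projecting from a point $p$ just outside \emph{one} chosen missing face produces a retraction of $I^n$ onto the union of the \emph{remaining} $2n-1$ faces, not onto $J_F^{n-1}$; if more than one codimension-one face is absent from $F$, your map $r$ need not land in $J_F^{n-1}$ at all. Concretely, take $n=2$ and $F=\{I\times\{0\}\}$ (the bottom edge only). Your recipe picks, say, the top face as missing and sets $p=(\tfrac12,2)$; but the ray from $p$ through $(0,\tfrac12)$ leaves $I^2$ through the left edge and never meets the bottom, so $r((0,\tfrac12))$ is either undefined or lies outside $J_F^{1}$. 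No single exterior projection point fixes this example---the correct retraction here is the orthogonal projection $(x_1,x_2)\mapsto(x_1,0)$.

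Under your ``no opposite pair'' hypothesis there is a vertex $v$ lying in every face of $F$, so $J_F^{n-1}$ is star-shaped at $v$ and contractible; one clean way to finish is to iterate your construction, peeling off one missing face at a time, or simply to observe that $(I^n,J_F^{n-1})$ is a CW pair of contractible spaces and invoke the cofibration argument. In any case, the paper only ever applies the lemma to $J^n$ and its reflection (a single missing face), so your first argument already suffices for every use made of it.
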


\begin{proof}
    Both deformation retractions are achieved via straight line homotopies. For example, there are unique straight lines between $a=(\frac{1}{2},\dots ,\frac{1}{2},1)$ and every point in $J^n$. Similarly for $J_F^{n-1}$.
\end{proof}

If $f\colon(X,A,x_0)\to (Y,B,y_0)$ is a map of pointed pairs, i.e., the map preserves the base point, then composition with $f$ induces a map $f_*\colon\pi_n(X,A,x_0)\to \pi_n (Y,B,y_0)$ which is a group homomorphism for $n\ge 2$. In the same fashion, a map of pointed spaces $f\colon(X,x_0)\to (Y,y_0)$ induces a group homomorphism $f_*\colon\pi_n(X,x_0)\to (Y,y_0)$ for $n\ge 1$. Additionaly, $g_*\circ f_*=(g\circ f)_*$ and $\id_*=\id$. These observations show that $\pi_n$ is a functor. Furthermore, $f\simeq g$ implies that $f_*=g_*$ and $\pi_n(X,A,x_0)$ is abelian for $n\ge 3$. 

Given $f \colon (I^{n+1},\partial I^{n+1},J^n)\to (X,A,x_0)$, restricting to $I^n$, identified with $I^n\times\{1\}$ in $I^{n+1}$, we have a map $\partial f\colon (I^n,\partial I^n)\to (A,x_0)$. By considering homotopy classes this induces a \textbf{boundary operator} $\partial\colon\pi_{n+1}(X,A,x_0)\to \pi_n(A,x_0)$. The boundary operator $\partial$ is a group homomorphisms for $n\ge 1$ and for $n=0$ we have $\partial([f])=[f(1)]$. Furthermore, there is a morphism $j_*\colon\pi_n(X,x_0)\to \pi_n(X,A,x_0)$ where we identify $\pi_n(X,x_0)$ with $\pi_n(X,\{x_0\},x_0)$ and define $j_*$ to be the morphism induced by inclusion $j\colon(X,\{x_0\},x_0)\hookrightarrow (X,A,x_0)$.

\begin{theorem}
\label{theorem:homotopy_exact_sequence}
Given a pointed pair of pseudotopological spaces $(X,A,x_0)$, the sequence 
\[\cdots\to \pi_n(A,x_0)\xrightarrow{i_*}\pi_n(X,x_0)\xrightarrow{j_*}\pi_n(X,A,x_0)\xrightarrow{\partial}\cdots \to \pi_1(X,A,x_0)\xrightarrow{\partial}\pi_0(A,x_0)\xrightarrow{i_*}\pi_0(X,x_0),\]
is exact, where $i_*,j_*$ are the maps induced by inclusions $i\colon (A,x_0)\to (X,x_0)$ and $j\colon (X,\{x_0\},x_0)\to (X,A,x_0)$, respectively and $\partial$ is the boundary operator described above.
\end{theorem}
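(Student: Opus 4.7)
The plan is to prove exactness at each of the three types of slots in the sequence by mimicking the classical topological argument, and checking that every construction (reparametrization, gluing, straight-line homotopy) stays continuous in $\cat{PsTop}$. The key observation is that every construction involves maps out of cubes $I^n$, $J^n$, $I^n\times I$, etc.; these are all topological spaces (products and subspaces of $I$), hence closure spaces, so the Pasting Lemma (\cref{prop:pasting_lemma}) applies to glue maps along closed covers, and continuity of the ``obvious'' piecewise-linear reparametrizations reduces to their known continuity in $\cat{Top}$.

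First I would prove exactness at $\pi_n(X,x_0)$. The composition $j_* \circ i_*$ is zero because given $f\colon(I^n,\partial I^n)\to (A,x_0)$, the straight-line homotopy $H(t_1,\dots,t_n,s) = f((1-s)(t_1,\dots,t_n) + s\cdot c)$ (where $c$ is any boundary point) gives a homotopy of triples $(I^n,\partial I^n,\partial I^n)\to (X,A,x_0)$ from $i\circ f$ to the constant map; continuity holds since all spaces involved are topological and this is the classical straight-line homotopy. Conversely, if $f\colon(I^n,\partial I^n)\to(X,x_0)$ satisfies $j_*[f]=0$, pick a homotopy of triples $H\colon(I^n\times I,\partial I^n\times I,J^{n-1}\times I)\to(X,A,x_0)$ from $f$ to a constant. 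Then $H|_{I^n\times\{1\}}\colon(I^n,\partial I^n)\to(A,x_0)$ represents a class in $\pi_n(A,x_0)$ whose image under $i_*$ equals $[f]$, via $H$ itself as the witnessing homotopy.

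Next I would handle exactness at $\pi_n(X,A,x_0)$ and at $\pi_{n-1}(A,x_0)$ together, as they rely on the same technique. For $\partial\circ j_* = 0$: if $f\colon(I^n,\partial I^n)\to(X,x_0)$, then $\partial(j_*[f])$ is represented by the restriction of $f$ to $I^{n-1}\times\{1\}$, which is constantly $x_0$. For the converse, suppose $[g]\in\pi_n(X,A,x_0)$ with $\partial[g]=0$ in $\pi_{n-1}(A,x_0)$, so there is a null-homotopy $K\colon I^{n-1}\times I\to A$ of $g|_{I^{n-1}\times\{1\}}$ rel $\partial I^{n-1}$. Using \cref{lemma:cube_horns}, the subspace $I^{n-1}\times\{1\}\cup J^{n-1}\subset \partial I^n$ (viewed sitting in the ``top'' of a new cube) deformation retracts nicely; I would glue $g$ with $K$ along $I^{n-1}\times\{1\}$ via the Pasting Lemma to produce a representative of a class in $\pi_n(X,x_0)$ mapping to $[g]$. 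Similarly $i_*\circ\partial=0$ because given $f\colon(I^n,\partial I^n,J^{n-1})\to (X,A,x_0)$, the map $i\circ\partial f = f|_{I^{n-1}\times\{1\}}$ extends to $f\colon I^n\to X$ which is a null-homotopy relative to $J^{n-1}$; applying \cref{lemma:cube_horns} (the deformation retraction of $I^n$ onto $J^{n-1}$) reparametrizes this into a based null-homotopy in $X$. Conversely, if $[\alpha]\in\pi_{n-1}(A,x_0)$ has $i_*[\alpha]=0$ via homotopy $H\colon I^{n-1}\times I\to X$ with $H_0=i\circ\alpha$, $H_1\equiv x_0$, $H|_{\partial I^{n-1}\times I}\equiv x_0$, then $H$ itself defines a map $(I^n,\partial I^n,J^{n-1})\to(X,A,x_0)$ whose $\partial$ is $[\alpha]$.

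The main obstacle I anticipate is purely bookkeeping of the reparametrizations needed to glue homotopies with the definition of $J^n$ as $\partial I^n\times I\cup I^n\times\{0\}$ rather than the more symmetric horn; this is identical to the obstacle in the classical proof (see e.g.\ \cite{tomDieck2008}), and the Pasting Lemma plus the fact that cubes are topological reduces every continuity verification to the topological case. There is no new difficulty arising from working in $\cat{PsTop}$ because the domains of all the homotopies in question are products $I^k\times I$, which are topological, hence closure spaces admitting finite closed covers $J^n$ and its complement on which the Pasting Lemma applies verbatim. Functoriality of $i_*,j_*$ and of $\partial$, together with the fact that $[f]+[g]$ is well-defined, mean no additional group-theoretic subtleties arise; the verifications proceed just as in $\cat{Top}$.
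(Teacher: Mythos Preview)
Your overall approach matches the paper's: reproduce the classical topological proof, observing that since every domain involved is a cube or a closed subspace of a cube, all gluings are justified by \cref{prop:pasting_lemma} and all reparametrizations are continuous in $\cat{Top}$ hence in $\cat{PsTop}$. The paper argues exactly this way, invoking \cref{lemma:cube_horns} for the needed deformation retractions.

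However, your treatment of exactness at $\pi_n(X,x_0)$ contains two genuine slips. For $\mathrm{im}(i_*)\subset\ker(j_*)$: your straight-line homotopy $H(t,s)=f((1-s)t+sc)$ is \emph{not} a homotopy of triples $(I^n,\partial I^n,J^{n-1})\to(X,A,x_0)$, because for $t\in J^{n-1}$ and $0<s<1$ the point $(1-s)t+sc$ typically lies in the interior of $I^n$, so $H_s(J^{n-1})$ need not equal $\{x_0\}$ (and your written triple $(I^n,\partial I^n,\partial I^n)$ is not the one defining $\pi_n(X,A,x_0)$). The paper instead precomposes $f$ with the strong deformation retraction $I^n\to J^{n-1}$ from \cref{lemma:cube_horns}; this keeps $J^{n-1}$ fixed throughout and lands in $A$ since $f(I^n)\subset A$. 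For $\ker(j_*)\subset\mathrm{im}(i_*)$: if $H$ is a homotopy of triples from $j_*f$ to the constant map, then $H|_{I^n\times\{1\}}$ \emph{is} the constant map $c_{x_0}$, not a nontrivial map $(I^n,\partial I^n)\to(A,x_0)$ as you claim; moreover $H$ is only constant on $J^{n-1}\times I$, not on all of $\partial I^n\times I$, so it does not directly witness a based homotopy in $\pi_n(X,x_0)$. The fix (which the paper carries out) is to note that $H(I^n\times\{1\}\cup\partial I^n\times I)\subset A$ and then precompose $H$ with the deformation retraction of $I^{n+1}$ onto the reflected horn $I^n\times\{1\}\cup\partial I^n\times I$ from \cref{lemma:cube_horns}; this produces a homotopy rel $\partial I^n$ from $f$ to a map $g$ with image in $A$, giving $[f]=i_*[g]$.
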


\begin{proof}
    We prove exactness at $\pi_n(A,x_0)$, $\pi_n(X,x_0)$ and $\pi_n(X,A,x_0)$, for all $n$.

    \textbf{Exactness at $\pi_n(A,x_0)$.} We show that $\textnormal{im}(\partial)\subset\ker(i_*)$. Let $f$ be a representative of a homotopy class $[f]\in \pi_{n+1}(X,A,x_0)$, $f\colon (\topint{n+1},\partial\topint{n+1},J^n)\to (X,A,x_0)$. By definition, $\partial f=f|_{\topint{n}\times \{1\}}\colon (\topint{n},\partial \topint{n})\to (A,x_0)$. Then $i_*(\partial f)\colon (\topint{n},\partial \topint{n})\to (X,x_0)$ is homotopic to the constant map, relative to the boundary, as the original map $f$ is such a homotopy.

    We now show that $\ker(i_*)\subset \textnormal{im}(\partial)$. Let $f$ be a representative of a homotopy class $[f]\in \pi_n(A,x_0)$ such that $i_{*}f=f\colon (\topint{n},\partial\topint{n})\to (X,x_0)$ is homotopic to the constant map at $x_0$, $c_{x_0}$. Such a homotopy is a map $H\colon \topint{n}\times I$ satisfying $H_1=f$, $H_0=c_{x_0}$ and $H|_{\partial\topint{n}\times I}=c_{x_0}$. This means that $H$ is a map of triples $H\colon (\topint{n+1},\partial\topint{n+1}, J^n)\to (X,A,x_0)$ and $\partial H=f$, by construction.
 
    \textbf{Exactness at $\pi_n(X,A,x_0)$.} We show that $\textnormal{im}(j_*)\subset \ker(\partial)$. Let $f\colon (\topint{n},\partial\topint{n})\to (X,x_0)$ be a representative of a homotopy class $[f]\in \pi_n(X,x_0)$. The map $j_*f$ is a map of triples $f\colon (\topint{n},\partial\topint{n},J^{n-1})\to (X,A,x_0)$. In particular $j_*f|_{\topint{n-1}\times \{1\}}=c_{x_0}$, the constant map at $x_0$. In other words, $\partial j_*f=c_{x_0}$.

    We now show that $\ker(\partial)\subset \textnormal{im}(j_*)$. Let $f\colon (\topint{n},\partial\topint{n},J^{n-1})\to (X,A,x_0)$ be a representative of a homotopy class $[f]\in \pi_n(X,A,x_0)$ which lies in $\ker(\partial)$. In other words, $\partial f\colon (\topint{n-1}\times\{1\},\partial\topint{n-1}\times\{1\})\to (A,x_0)$ is homotopic to the constant map $c_{x_0}$, at $x_0$, relative to the boundary. Let $H$ be a homotopy relative to the boundary, between $\partial f$ and $c_{x_0}$. Define $H'\colon J^n=\topint{n}\times \{0\}\cup\partial\topint{n}\times I\to X$ by setting $H'=f$ on $\topint{n}\times\{0\}$, the homotopy $H$ on $\topint{n-1}\times \{1\}\times I$ and is equal to the constant map $c_{x_0}$ on the rest of $\partial\topint{n}\times I$. Therefore $H'$ is continuous by \cref{prop:pasting_lemma}. By \cref{lemma:cube_horns} there exists a map $F\colon \topint{n+1}\to X$ which restricts to $H'$ on $J^n$, that is a homotopy of maps of triples $(\topint{n},\partial\topint{n},J^{n-1})\to (X,A,x_0)$ between $f$ and $F_1\colon (\topint{n},\partial\topint{n})\to (X,x_0)$. Hence $[f]=j_*([F_1]$.
 
    \textbf{Exactness at $\pi_n(X,x_0)$.} We show that $\textnormal{im}(i_*)\subset \ker(j_*)$. Let $f\colon(\topint{n},\partial\topint{n})\to (A,x_0)$ be a representative of a homotopy class $[f]\in \pi_n(A,x_0)$. Then $i_*(f)=f\colon (\topint{n},\partial\topint{n})\to (X,x_0)$ is the map $f$, with an expanded codomain. Furthermore, $j_*i_*f=f$ as a map of triples $f\colon (\topint{n},\partial\topint{n},J^{n-1})\to (X,A,x_0)$, but from the original definition of $f$ we have $f(\topint{n})\subset A$. Let $H\colon \topint{n}\times I\to \topint{n}$ be the strong deformation retraction from \cref{lemma:cube_horns} of $\topint{n}$ to $J^{n-1}$. Then $f\circ H_0=f$ and $f\circ H_1=c_{x_0}$, the constant map at $x_0$. Thus $f\circ H$ is a homotopy between $f$ and $c_{x_0}$ in $\pi_n(X,A,x_0)$.

    We now show that $\ker(j_*)\subset \textnormal{im}(i_*) $. Let $f\colon (\topint{n},\partial\topint{n})\to (X,x_0)$ be a representative of a homotopy class $[f]\in \pi_n(X,x_0)$ such that $j_*f=f\colon (\topint{n},\partial\topint{n},J^{n-1})\to (X,A,x_0)$ is null homotopic in $\pi_n(X,A,x_0)$. Let $H\colon \topint{n}\times I$ be a homotopy between $f\colon (\topint{n},\partial\topint{n},x_0)\to (X,A,x_0)$ and the constant map at $x_0$, $c_{x_0}$. This implies that $H(\topint{n}\times\{1\})\subset \{x_0\}\subset A$ and $H(\partial\topint{n}\times I)\subset A$. Furthermore, $I^n\times I$ strongly deformation retracts to $\topint{n}\times\{1\}\cup \partial\topint{n}\times I$, which is a ``reflected" copy of $J^n$, by \cref{lemma:cube_horns}. Composing $H$ with this deformation retraction we get that $f$ is homotopic relative to $\partial\topint{n}$ to a map $g$, whose image is contained in $A$.
\end{proof}

\subsection{Weak homotopy equivalences}
\label{section:weak_homotopy_equivalence}
A map $f\colon X\to Y$ of pseudotopological spaces is called an $n$-\textbf{equivalence}, if for all $x\in X$, $f_*\colon \pi_k(X,x)\to \pi_k(Y,f(x))$ is an isomorphism for all $0\le k<n$ and a surjective homomorphism for $k=n$. A map of pairs $f\colon (X,A)\to (Y,B)$ is called an $n$-\textbf{equivalence} if $(f_*)^{-1}(\textnormal{Im}(\pi_0(B)\to \pi_0(Y))=\textnormal{Im}(\pi_0(A)\to \pi_0(X))$ and for all $a\in A$, $\pi_k(X,A,a)\to (Y,B,f(a))$ is an isomorphism for all $0\le k<n$ and a surjective homomorphism for $k=n$. Note that the condition of path components is automatically satisfied if both $X$ and $Y$ are path connected. In both the absolute and relative cases, $f$ is called a \textbf{weak homotopy equivalence} if $f$ is an $n$-equivalence for all $n$.  We say that the pair $(X,A)$ of pseudotopological spaces is $n$-connected if $\pi_i(X,A)=0$ for all $i\le n$. The following are inspired by the work of May in \cite{may2006weak}.

\begin{definition}
\label{def:cofibration}
    A map $i\colon A\to X$ of pseudotopological spaces has the \textbf{homotopy extension property} (HEP) for a pseudotopological space $Y$ if for each homotopy $h\colon A\times I \to Y$ and each map $f\colon X\to Y$ with $fi(a)=h(a,0)$ there exists a homotopy $H\colon X\times I\to Y$ such that $H(x,0)=f(x)$ and $H(i(a),t)=h(a,t)$. The map $i\colon A\to X$ is a \textbf{cofibration} if it has the HEP for all pseudotopological spaces $Y$. The HEP can be summarized with the following diagram:
\begin{center}
\begin{tikzcd}
&X\arrow[dr,"i_0^X"]\arrow[drr,bend left=30,"f"]\\
A\arrow [ur,"i"]\arrow[dr,"i_0^A"] && X\times I\arrow[r,dashed,"H"]& Y\\
& A\times I\arrow[ur,"i\times id"]\arrow[urr,bend right=30,"h"]
\end{tikzcd}
\end{center}
\end{definition}

\begin{lemma}
    \label{lemma:cofibrations_of_compact_hausdorff_spaces}
    If a pair of topological Hausdorff spaces $(X,A)$ satisfies the HEP in $\cat{Top}$, then $(X,A)$ satisfies the HEP in $\cat{PsTop}$.
\end{lemma}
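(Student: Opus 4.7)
The plan is to bootstrap off the classical topological characterization of the HEP as the existence of a retraction, and then transport it to the pseudotopological setting via the pasting lemma. Set $M'_i := (X\times\{0\}) \cup (A\times I) \subset X\times I$ with the subspace structure. Applying the assumed $\cat{Top}$-HEP to the test space $Y=M'_i$, with $f\colon X\to M'_i$ the inclusion as $X\times\{0\}$ and $h\colon A\times I\to M'_i$ the identity inclusion, produces a continuous extension $X\times I\to M'_i$ that is easily seen to be a retraction $r\colon X\times I\to M'_i$ in $\cat{Top}$. Since $X\times I$ is a product of topological spaces, and right adjoints preserve limits (\cref{remark:colimits}), $X\times I$ remains topological in $\cat{PsTop}$; hence so does its subspace $M'_i$, and $r$ remains continuous as a morphism in $\cat{PsTop}$ because $\cat{Top}\hookrightarrow\cat{PsTop}$ is full.

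Given arbitrary pseudotopological data $f\colon X\to Y$ and $h\colon A\times I\to Y$ satisfying the compatibility of \cref{def:cofibration}, I would next paste the restrictions of $f$ to $X\times\{0\}$ and of $h$ to $A\times I$ into a single continuous $\phi\colon M'_i\to Y$ via \cref{prop:pasting_lemma}. That lemma requires three things: $M'_i$ to be a closure space, which holds since it is topological; the two pieces $X\times\{0\}$ and $A\times I$ to be closed in $M'_i$; and the target $Y$ to be a limit space. The last condition is a quick consequence of the pseudotopological axiom: if $\lambda,\lambda'\to x$ and $\gamma$ is an ultrafilter refining $\lambda\cap\lambda'$, then $\gamma$ must refine $\lambda$ or $\lambda'$ (otherwise there exist $A\in\lambda$, $B\in\lambda'$ with $(A\cup B)^c\in\gamma$, contradicting $A\cup B\in\lambda\cap\lambda'\subset\gamma$), so $\gamma\to x$ and hence $\lambda\cap\lambda'\to x$. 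The desired homotopy is then $H := \phi\circ r\colon X\times I\to Y$, with $H(x,0)=f(x)$ and $H(a,t)=h(a,t)$ immediate from $r$ being a retraction onto $M'_i$.

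The main obstacle is the middle condition of the pasting lemma, namely that $A$ (and hence $A\times I$) is closed in $X$. This is exactly where the Hausdorff hypothesis does real work: invoking the classical result that a cofibration into a Hausdorff space is a closed embedding, together with $\{0\}$ being closed in the Hausdorff interval $I$, gives closedness of both $X\times\{0\}$ and $A\times I$ inside $M'_i$. Once this is in hand the remainder is bookkeeping that exploits the agreement of topological products across $\cat{Top}$, $\cat{Cl}$ and $\cat{PsTop}$.
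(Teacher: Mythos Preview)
Your argument is correct and follows essentially the same line as the paper: obtain a retraction $r\colon X\times I\to (X\times\{0\})\cup(A\times I)$ from the $\cat{Top}$-HEP, use the Hausdorff hypothesis to conclude $A$ is closed in $X$, apply \cref{prop:pasting_lemma} to glue $f$ and $h$ into a single map on $(X\times\{0\})\cup(A\times I)$, and precompose with $r$. You supply a little extra detail the paper omits---namely the explicit construction of $r$ and the verification that pseudotopological spaces are limit spaces (which the paper invokes elsewhere as a known fact)---but the structure is the same.
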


\begin{proof}
    Since $(X,A)$ has the HEP in $\cat{Top}$, there exists a retract $r\colon X\times I\to (X\times\{0\})\cup (A\times I)$. Let $Y$ be a pseudotopological space and let $f\colon A\times I\to Y$ and $g\colon X\times\{0\}\to Y$ be such that $f(a,0)=g(a,0)$ for all $a\in A$. Define $H\colon X\times I\to Y$ by
    \[H(x,t)=\begin{cases}
        f\circ r(x,t),& \forall (x,t)\in X\times I \textnormal{ such that } r(x,t)\in A\times I\\
        g\circ r(x,t),& \forall (x,t)\textnormal{ such that } r(x,t)\in X\times \{0\} 
    \end{cases}.\]
    $H$ is well defined by assumptions on $f$ and $g$. Since $A$ and $X$ are Hausdorff and the inclusion $A\hookrightarrow X$ is a cofibration in $\cat{Top}$, $A$ is closed in $X$. Therefore, $\{A\times I,X\times \{0\}\}$ is a closed cover of $(X\times \{0\})\cup (A\times I)$. Therefore the map $f\cup g\colon (X\times \{0\})\cup (A\times I)\to Y$ is continuous by \cref{prop:pasting_lemma}. Hence as $H=(f\cup g)\circ r$, we have that $H$ is continuous.
\end{proof}

\begin{corollary}
\label{corollary:CW_pairs_HEP}
    Let $X$ be a finite $CW$ complex and let $A$ be a subcomplex. Then the pair $(X,A)$ has the HEP, in $\cat{PsTop}$. 
\end{corollary}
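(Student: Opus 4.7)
The plan is to bootstrap this from the classical topological statement via the two results already established: \cref{prop:finite_cw_complexes_are_topological} (finite CW complexes agree in $\cat{Top}$ and $\cat{PsTop}$) and \cref{lemma:cofibrations_of_compact_hausdorff_spaces} (HEP transfers from $\cat{Top}$ to $\cat{PsTop}$ for Hausdorff pairs). The whole argument should be essentially a citation chain.

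First, since $X$ is a finite CW complex in $\cat{PsTop}$, \cref{prop:finite_cw_complexes_are_topological} tells us that $X$ is in fact a topological space, and moreover it is the finite CW complex obtained by the classical construction in $\cat{Top}$. In particular $X$ is compact and Hausdorff (the finite CW topology on a finite cell complex is well known to be compact Hausdorff). A subcomplex $A$ of a finite CW complex is itself a finite CW complex built from a subset of the cells with the same attaching maps, so $A$ is also a finite CW complex in both $\cat{Top}$ and $\cat{PsTop}$ (again by \cref{prop:finite_cw_complexes_are_topological}) and hence compact Hausdorff. Moreover, $A$ is a closed subspace of $X$.

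Next I invoke the classical fact that every CW pair $(X,A)$ has the homotopy extension property in $\cat{Top}$; equivalently, there is a continuous retraction $r\colon X\times I \to (X\times\{0\})\cup (A\times I)$. For finite CW pairs this is standard (cf.\ Hatcher, Proposition 0.16), and in our setting both $X$ and $A$ are honest topological spaces by the previous paragraph, so the classical statement applies directly.

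Finally, since $(X,A)$ is a pair of Hausdorff topological spaces with the HEP in $\cat{Top}$, \cref{lemma:cofibrations_of_compact_hausdorff_spaces} immediately promotes this to the HEP in $\cat{PsTop}$. This is the entire argument; there is essentially no obstacle, the only subtlety being the verification that the topological structure on $X$ (and on $A$) as a finite CW complex in $\cat{PsTop}$ really does coincide with the classical one, which is precisely what \cref{prop:finite_cw_complexes_are_topological} provides.
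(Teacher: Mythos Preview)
Your proof is correct and is exactly the argument the paper intends: the corollary is stated immediately after \cref{lemma:cofibrations_of_compact_hausdorff_spaces} with no separate proof, so the implicit reasoning is precisely the citation chain you give (finite CW complexes are topological by \cref{prop:finite_cw_complexes_are_topological}, hence Hausdorff; the classical HEP for CW pairs in $\cat{Top}$ holds; then \cref{lemma:cofibrations_of_compact_hausdorff_spaces} transfers it to $\cat{PsTop}$).
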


Let $X$ be a pseudotopological space with subspaces $A$ and $B$. We call the triple $(X;A,B)$ a \textbf{triad}. 
If $\{A,B\}$ is an interior cover for $X$, we call $(X;A,B)$ an \textbf{excisive triad}. A \textbf{map of a triads} $f\colon (X;A,B)\to (Y;C,D)$ is a map $f\colon X\to Y$ such that $f(A)\subset C$ and $f(B)\subset D$. The statement and proof of \cref{theorem:n_equivalences} below is borrowed almost verbatim from Hatcher \cite[Proposition 4K.1]{hatcher2002algebraictopology}, however there are a few instances where the arguments might not carry over when going from $\cat{Top}$ to $\cat{PsTop}$. We write out the proof for completeness sake in the Appendix. This way the reader can verify the arguments carry over, albeit with small modifications. 

\begin{theorem}
\label{theorem:n_equivalences}
    Let $f\colon (X;X_1,X_2)\to (Y;Y_1,Y_2)$ be a map of excisive triads in $\cat{PsTop}$ such that $f\colon (X_i,X_1\cap X_2)\to (Y_i,Y_1\cap Y_2)$ is an $n$-equivalence for $i=1,2$. Then $f\colon (X,X_i)\to (Y,Y_i)$ is an $n$-equivalence for $i=1,2$. 
\end{theorem}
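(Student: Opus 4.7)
The theorem asserts that $f_\ast\colon\pi_k(X,X_i,x_0)\to\pi_k(Y,Y_i,f(x_0))$ is bijective for $k<n$, surjective for $k=n$, and that the requisite $\pi_0$-condition holds. Fix $i=1$; the case $i=2$ is symmetric. I would unify surjectivity and injectivity into one \emph{compression} problem: given a continuous $g\colon I^k\to Y$ with $k\le n$ representing a class in $\pi_k(Y,Y_1,y_0)$ together with a chosen subcomplex $C\subset I^k$ (taken to be $J^{k-1}$ in the surjectivity case, and to be $\partial I^k\cup J^k\times I$ for the injectivity/one-higher-dimension case) on which $g$ already factors through $f$ via some lift $\tilde g|_C\colon C\to X_1$, produce a homotopy of $g$ rel $C$ to a map that lifts to $(X,X_1)$. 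The HEP for inclusions of subcomplexes, available via \cref{corollary:CW_pairs_HEP}, then converts such a homotopy on $Y$ into the corresponding extension on $X$.

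The key geometric step is to subdivide $I^k$ so that each subcube maps into either $Y_1$ or $Y_2$. Because $\{Y_1,Y_2\}$ is an interior cover of $Y$ and $g$ is continuous, continuity in terms of neighborhoods (\cref{theorem:continuity_in_terms_of_nbhds}) shows that $\{g^{-1}(Y_1),g^{-1}(Y_2)\}$ is an interior cover of the compact topological space $I^k$. Applying \cref{lemma:finite_cover_whose_image_is_contained_in_covering_system} and passing to a cubical refinement yields a finite subdivision of $I^k$ into closed subcubes $\sigma$, each tagged with a color $j(\sigma)\in\{1,2\}$ such that $g(\sigma)\subset Y_{j(\sigma)}$; every face shared by cubes of opposite colors is therefore mapped into $Y_1\cap Y_2$.

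I would then induct over subcubes in order of increasing dimension. For a subcube $\sigma$ of dimension $d\le k\le n$ with color $j(\sigma)$, the inductive hypothesis supplies a partial lift $\tilde g|_{\partial\sigma}$ landing in $X_{j(\sigma)}$, with the portion of $\partial\sigma$ adjacent to cubes of the opposite color landing in $X_1\cap X_2$. Hence $g|_\sigma$ defines a class in $\pi_d\bigl(Y_{j(\sigma)},Y_1\cap Y_2\bigr)$ whose boundary already lifts, so the hypothesis that $f\colon(X_{j(\sigma)},X_1\cap X_2)\to(Y_{j(\sigma)},Y_1\cap Y_2)$ is an $n$-equivalence allows us to homotope $g|_\sigma$ rel $\partial\sigma$ to a map that lifts into $X_{j(\sigma)}$, with the newly added boundary cells landing in $X_1\cap X_2$ when necessary. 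These local homotopies are glued over the subdivision using \cref{prop:pasting_lemma}, and promoted to a global homotopy of $g$ using the HEP of subcomplex inclusions in $I^k$ (\cref{corollary:CW_pairs_HEP}). The $\pi_0$ clause falls out of the $k=0,1$ instances of the same argument.

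The hardest part, and the reason the proof is not truly verbatim from the topological version, is ensuring that the classical gluing manipulations remain valid in $\cat{PsTop}$: one must repeatedly invoke that $-\times I$ preserves colimits (Cartesian closedness, \cref{example:function_spaces}(3)), that the pasting lemma is available when the target is merely pseudotopological (\cref{prop:pasting_lemma}), and that finite CW pairs in $\cat{PsTop}$ satisfy the HEP (\cref{lemma:cofibrations_of_compact_hausdorff_spaces,corollary:CW_pairs_HEP}). The interior-cover version of the Lebesgue subdivision afforded by \cref{lemma:finite_cover_whose_image_is_contained_in_covering_system} is precisely what lets the combinatorial scheme of the topological proof proceed in the pseudotopological target.
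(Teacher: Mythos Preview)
Your overall scheme---subdivide $I^k$ so each subcube lands in $Y_1$ or $Y_2$, then induct cube-by-cube using the local $n$-equivalence---matches the paper's. The genuine gap is the step you phrase as ``the hypothesis that $f\colon(X_{j(\sigma)},X_1\cap X_2)\to(Y_{j(\sigma)},Y_1\cap Y_2)$ is an $n$-equivalence allows us to homotope $g|_\sigma$ rel $\partial\sigma$ to a map that lifts into $X_{j(\sigma)}$.'' An $n$-equivalence only gives isomorphisms and surjections on relative homotopy \emph{groups}; it does not directly hand you the relative lifting property you invoke (lift $g|_\sigma$ through $f$ while keeping a \emph{prescribed} lift on $\partial\sigma$ fixed). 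Your appeal to the HEP does not bridge this: HEP extends homotopies along subcomplex inclusions in the \emph{domain}; it does not manufacture lifts through $f$ in the target.

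The paper closes this gap with two ingredients you omit. First, it replaces $Y$ by the mapping cylinder $M_f$ so that $f$ becomes an inclusion; this requires the nontrivial \cref{lemma:excisive_triad_mapping_cylinder} to check that the induced decomposition $(M_f;M_1,M_2)$ is still an excisive triad in $\cat{PsTop}$ (it fails for the naive choice, cf.\ \cref{example:excisive_triad_mapping_cylinder}). Second, once $f$ is an inclusion, \cref{lemma:hatcher_lemma} converts the $n$-equivalence hypothesis on each $(X_i,X_1\cap X_2)\hookrightarrow(Y_i,Y_1\cap Y_2)$ into an explicit extension condition (condition~(2) there) on maps from $D^m\times\{0\}\cup\partial_+D^m\times I$, which is exactly the shape needed to extend over each subcube $K\times I$ while maintaining the consistency condition~(*). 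Without this machinery your inductive step is not justified, and the bookkeeping of how the local homotopies glue rel boundary---which you defer to \cref{prop:pasting_lemma}---is precisely where the paper's $K\times I$ framing and condition~(*) do real work.
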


\begin{corollary}
\label{corollary:weak_equivalences}
    Let $f\colon(X;X_1,X_2)\to (Y;Y_1,Y_2)$ be a map of excisive triads such that $f:X_1\cap X_2\to Y_1\cap Y_2$, $f\colon X_1\to Y_1$, and $f\colon X_2\to Y_2$ are weak equivalences. Then $f$ is a weak equivalence.
\end{corollary}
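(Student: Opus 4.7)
The plan is to derive the absolute weak equivalence of $f\colon X\to Y$ by a two-step application of \cref{theorem:n_equivalences}, bridged by the long exact sequence of a pair (\cref{theorem:homotopy_exact_sequence}) and the five lemma applied to pointed sets/groups.

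First, I will promote the absolute hypotheses to relative weak equivalences. Fix $i\in\{1,2\}$ and a basepoint $a\in X_1\cap X_2$. The inclusions give a commutative ladder
\begin{equation*}
\begin{tikzcd}[column sep=small]
\pi_{k+1}(X_i,X_1\cap X_2,a)\ar[r]\ar[d,"f_*"] & \pi_k(X_1\cap X_2,a)\ar[r]\ar[d,"f_*"] & \pi_k(X_i,a)\ar[r]\ar[d,"f_*"] & \pi_k(X_i,X_1\cap X_2,a)\ar[r]\ar[d,"f_*"] & \pi_{k-1}(X_1\cap X_2,a)\ar[d,"f_*"] \\
\pi_{k+1}(Y_i,Y_1\cap Y_2,b)\ar[r] & \pi_k(Y_1\cap Y_2,b)\ar[r] & \pi_k(Y_i,b)\ar[r] & \pi_k(Y_i,Y_1\cap Y_2,b)\ar[r] & \pi_{k-1}(Y_1\cap Y_2,b)
\end{tikzcd}
\end{equation*}
where $b=f(a)$, with exact rows by \cref{theorem:homotopy_exact_sequence}. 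By hypothesis the maps on $\pi_k(X_1\cap X_2)$ and $\pi_k(X_i)$ are isomorphisms for all $k$, so the five lemma (in its appropriate pointed-set or group form, valid in every degree since we use it only at relative terms of index $\ge 1$) forces $f_*\colon\pi_k(X_i,X_1\cap X_2,a)\to\pi_k(Y_i,Y_1\cap Y_2,b)$ to be an isomorphism for every $k\ge 1$. The relative $\pi_0$ condition is verified directly: if $[x]\in\pi_0(X_i)$ satisfies $f_*[x]\in\mathrm{Im}(\pi_0(Y_1\cap Y_2)\to\pi_0(Y_i))$, say $f_*[x]=f_*[z]$ for some $z\in X_1\cap X_2$ (using bijectivity on $\pi_0(Y_1\cap Y_2)$), then bijectivity on $\pi_0(X_i)$ gives $[x]=[z]\in\mathrm{Im}(\pi_0(X_1\cap X_2)\to\pi_0(X_i))$; the reverse inclusion is immediate. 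Hence $f\colon(X_i,X_1\cap X_2)\to(Y_i,Y_1\cap Y_2)$ is an $n$-equivalence for every $n$, i.e., a weak equivalence of pairs.

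Next, invoke \cref{theorem:n_equivalences} directly: for each $n$, the hypotheses are met, so $f\colon(X,X_i)\to(Y,Y_i)$ is an $n$-equivalence for $i=1,2$; letting $n\to\infty$ yields that these are weak equivalences of pairs. Now I reverse the five-lemma step. For any basepoint $x\in X$, since $\{X_1,X_2\}$ is an interior cover, $x\in X_i$ for some $i\in\{1,2\}$. Apply the long exact sequence ladder associated to $(X,X_i)$ and $(Y,Y_i)$ at basepoint $x$:
\begin{equation*}
\begin{tikzcd}[column sep=small]
\pi_{k+1}(X,X_i,x)\ar[r]\ar[d,"f_*"] & \pi_k(X_i,x)\ar[r]\ar[d,"f_*"] & \pi_k(X,x)\ar[r]\ar[d,"f_*"] & \pi_k(X,X_i,x)\ar[r]\ar[d,"f_*"] & \pi_{k-1}(X_i,x)\ar[d,"f_*"] \\
\pi_{k+1}(Y,Y_i,f(x))\ar[r] & \pi_k(Y_i,f(x))\ar[r] & \pi_k(Y,f(x))\ar[r] & \pi_k(Y,Y_i,f(x))\ar[r] & \pi_{k-1}(Y_i,f(x)).
\end{tikzcd}
\end{equation*}
The outer four vertical maps are isomorphisms (by the hypothesis for $X_i\to Y_i$ and by Step 2 for the pair), so the five lemma gives that $f_*\colon\pi_k(X,x)\to\pi_k(Y,f(x))$ is an isomorphism for all $k\ge 1$.

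Finally, for $\pi_0$, surjectivity of $f_*\colon\pi_0(X)\to\pi_0(Y)$ follows because every $y\in Y$ lies in $Y_i$ for some $i$, and $f\colon\pi_0(X_i)\to\pi_0(Y_i)$ is surjective. For injectivity, suppose $x,x'\in X$ with $f(x),f(x')$ in the same path component of $Y$. Choose $i$ with $x\in X_i$; since $f\colon(X,X_i)\to(Y,Y_i)$ satisfies the relative $\pi_0$ condition obtained in Step 2, and since $[f(x')]$ agrees with $[f(x)]$ in $\pi_0(Y)$, the class $[x']$ lies in $\mathrm{Im}(\pi_0(X_i)\to\pi_0(X))$, reducing to injectivity of $f_*$ on $\pi_0(X_i)$, which holds by hypothesis. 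The main technical delicacy is ensuring the low-degree portion of the five-lemma argument (where one works with pointed sets and possibly non-abelian groups) is legitimate, but the exact-sequence terms in question are either groups or sequences of pointed sets where the standard diagram chase still produces bijections, so no new ideas are required beyond careful bookkeeping.
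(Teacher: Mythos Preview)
The paper states this corollary without proof, treating it as an immediate consequence of \cref{theorem:n_equivalences}; your argument is exactly the standard derivation one would supply (pass from absolute to relative weak equivalences via the long exact sequence and five lemma, apply \cref{theorem:n_equivalences}, then reverse the process), and it is correct in outline.

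One small point worth tightening: in your $\pi_0$ injectivity step, after producing $x''\in X_i$ with $[x'']=[x']$ in $\pi_0(X)$, you say this ``reduces to injectivity of $f_*$ on $\pi_0(X_i)$.'' That would require knowing $[f(x)]=[f(x'')]$ in $\pi_0(Y_i)$, whereas you only know equality in $\pi_0(Y)$. The fix is the diagram chase you allude to at the end: since $[f(x'')]$ maps to the basepoint in $\pi_0(Y,f(x))$, exactness gives $[f(x'')]\in\mathrm{im}\,\partial_Y$; lift via the surjection $f_*\colon\pi_1(X,X_i,x)\to\pi_1(Y,Y_i,f(x))$ from Step~2, apply $\partial_X$, and use injectivity of $f_*$ on $\pi_0(X_i)$ to conclude $[x'']\in\mathrm{im}\,\partial_X$, hence $[x'']=[x]$ in $\pi_0(X)$. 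This is presumably what you meant by ``careful bookkeeping,'' but the sentence as written skips the crucial use of $\pi_1$ of the pair.
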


\begin{corollary}
\label{corollary:weak_hom_eq_on_inter_cover}
    Let $f\colon X\to Y$ be a map of pseudotopological spaces and let $\mathcal{U}$ be a finite interior cover for $Y$. Let $\mathcal{V}$ be the the closure of $\mathcal{U}$ under finite intersections. That is 
    \[\mathcal{V}=\{V\subset X\,|\,\exists U_1,\dots ,U_n\in \mathcal{U}, V=\bigcap_nU_n\}.\]
    If $f^{-1}(V)\to V$ is a weak equivalence for all $V\in \mathcal{V}$, then $f\colon X\to Y$ is a weak equivalence. 
\end{corollary}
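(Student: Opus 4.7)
The plan is to prove the corollary by induction on the cardinality $n = |\mathcal{U}|$, using Corollary \ref{corollary:weak_equivalences} as the engine at each inductive step. For the base case $n = 1$, the interior cover condition forces the unique element $U$ of $\mathcal{U}$ to equal $Y$, since $Y = i_Y(U) \subset U \subset Y$; then $\mathcal{V} = \{Y\}$ and the hypothesis is precisely the conclusion.

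For the inductive step with $n \geq 2$, enumerate $\mathcal{U} = \{U_1, \ldots, U_n\}$ and split $Y = A \cup B$ with $A = U_1 \cup \cdots \cup U_{n-1}$ and $B = U_n$. The first step is to verify that $(Y; A, B)$ is an excisive triad: for any $y \in Y$, the interior cover assumption on $\mathcal{U}$ produces some $U_i$ that is a $Y$-neighborhood of $y$, and depending on whether $i \leq n-1$ or $i = n$ this neighborhood witnesses either $y \in i_Y(A)$ or $y \in i_Y(B)$. By continuity of $f$ and \cref{theorem:continuity_in_terms_of_nbhds}, the preimage triad $(X; f^{-1}(A), f^{-1}(B))$ is also excisive, so $f$ becomes a map of excisive triads to which \cref{corollary:weak_equivalences} will be applied.

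It then suffices to verify that $f^{-1}(B) \to B$, $f^{-1}(A) \to A$, and $f^{-1}(A \cap B) \to A \cap B$ are weak equivalences. The first is immediate because $B = U_n \in \mathcal{V}$. For the other two, the plan is to apply the inductive hypothesis to the covers $\{U_1, \ldots, U_{n-1}\}$ of $A$ and $\{U_1 \cap U_n, \ldots, U_{n-1} \cap U_n\}$ of $A \cap B$; the closures of both collections under finite intersections are contained in $\mathcal{V}$, so the hypothesis on preimages of elements of $\mathcal{V}$ transfers directly to the two subproblems.

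The principal obstacle will be confirming that the chosen restricted families are genuinely interior covers of the pseudotopological subspaces $A$ and $A \cap B$. A point $a \in A$ whose only $Y$-neighborhood is contained in $U_n$ is not automatically covered, in the subspace sense, by any $U_i$ with $i \leq n-1$, so the naive reduction is not entirely automatic in $\cat{PsTop}$. Resolving this will require careful use of the subspace structure, together with the fact that $A$ is by construction the union of the $U_i$ for $i \leq n-1$ and that $\mathcal{V}$ is closed under intersections, possibly augmenting the covers with sets of the form $U_i \cap U_n$ (which remain in $\mathcal{V}$) so that a legitimate interior cover is obtained at each stage and the induction goes through.
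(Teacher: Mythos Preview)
Your inductive scheme is precisely the paper's: reduce to a two-set cover $\{A,B\}$ with $A=\bigcup_{i<n}U_i$ and $B=U_n$, invoke \cref{corollary:weak_equivalences}, and recurse on $A$ and on $A\cap B$. The paper writes this reduction in one sentence and does not address the point you single out as the principal obstacle.

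That obstacle is genuine, and your proposed patch does not close it. The family $\{U_1,\dots,U_{n-1}\}$ need not be an interior cover of the subspace $A$, and augmenting by the pairwise intersections $U_i\cap U_n$ does not repair this. Concretely, take the digraph $Y=\{a,p,q,r,s\}$ with edges $pEa$ and $qEa$ (plus loops), and set $U_1=\{a,p,r\}$, $U_2=\{a,q\}$, $U_3=\{a,p,q,s\}$. One checks that $\{U_1,U_2,U_3\}$ is an interior cover of $Y$, yet in the subspace $A=U_1\cup U_2=\{a,p,q,r\}$ one computes $i_A(U_1)=\{p,r\}$, $i_A(U_2)=\{q\}$, $i_A(U_1\cap U_3)=\{p\}$, and $i_A(U_2\cap U_3)=\{q\}$; the point $a$ lies in none of these interiors. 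So neither the restricted family nor its augmentation by the $U_i\cap U_n$ is an interior cover of $A$, the inductive hypothesis cannot be invoked, and the cardinality does not decrease if one instead throws in $U_3\cap A$ (which covers $a$ but is not in $\mathcal{V}$). In fairness, the paper's terse proof glosses over exactly this point, so your write-up is no less complete than the original --- but the gap is real and a different argument is needed to close it.
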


\begin{proof}
    Suppose that $\mathcal{U}=\{U_1,U_2\}$ is an interior cover of $Y$ consisting of two elements. Note that by \cref{theorem:continuity_in_terms_of_nbhds}, $\{f^{-1}(U_1),f^{-1}(U_2)\}$ is an interior cover of $X$. Then the statement follows from \cref{corollary:weak_equivalences}. The general case of finite interior covers can be reduced to the cover by two elements in the following way. Given a finite interior cover $\mathcal{U}=\{U_1,\dots, U_n\}$, we create a new interior cover by two elements by defining $U'_1=U_1$ and $U'_2=\bigcup_{i=2}^nU_i$. 
\end{proof}

\begin{remark}
    In the category of topological spaces, $\cat{Top}$, the statement of \cref{corollary:weak_hom_eq_on_inter_cover} is true for arbitrary open covers \cite[Theorem 6.7.11]{tomDieck2008}. The argument goes as follows. Since $I^n$ is compact, any map $f:I^n\to X$ or $f:I^n\to Y$ will have compact image and thus we can reduce the case to finite open covers and then apply \cref{theorem:n_equivalences}. However, since we are working with interior covers, in $\cat{Cl}$ or $\cat{PsTop}$, compactness means that we only get a finite subcover of the image  of $f$ (\cref{section:Compactness}), which might not be an interior cover and thus we cannot apply \cref{theorem:n_equivalences} to the subcover.
\end{remark}

\subsection{Singular (co)homology of pseudotopological spaces}
The $n$-\textbf{dimensional standard simplex} is the topological space
\[\simp{n}=\{(t_0,\dots, t_n)\in \mathbb{R}^{n+1}\,|\,\sum_{i=0}^nt_i=1,t_i\ge 0\},\]
with the topology inherited from $\mathbb{R}^{n+1}$. For a set of points $v_0,\dots ,v_m\in \mathbb{R}^k$, we denote by $[v_0,\dots ,v_m]$ the convex hull of $v_0,\dots, v_m$ in $\mathbb{R}^k$. Thus, we have $\simp{n}=[e_0,\dots, e_n]$ where the $e_i$'s  are the standard basis vectors in $\mathbb{R}^{n+1}$.
Let $X$ be a pseudotopological space. A map $\sigma:\simp{n}\to X$ of pseudotopological spaces is called a \textbf{singular} $n$-\textbf{simplex}. We denote by $C_n(X)$ the free abelian group generated by all singular $n$-simplices, that is $C_n(X)=\mathbb{Z}\langle\{\sigma:\simp{n}\to X\,|\, \sigma \textnormal{ is continuous}\}\rangle$. An element $c\in C_n(X)$ is called a \textbf{singular} $n$-\textbf{chain}. The \textbf{singular boundary operator} $\partial_n$ is a group homomorphism $\partial_n\colon C_n(X)\to C_{n-1}(X)$ defined on singular simplices by
\[\partial_n\sigma=\sum_{i=0}^n(-1)^i\sigma|_{[e_0,\dots,\hat{e}_i,\dots,e_n]}\]
where the notation $\hat{e}_i$ means that $e_i$ is excluded. Then, $\partial_n$ is extended linearly to all of $C_n(X)$. If $c\in \ker \partial_n$ we call the chain $c$ a \textbf{cycle}. If $c\in \textnormal{Im}\partial_n$, we call $c$ a \textbf{boundary}. For $n=0$, $\partial_0$ is set to be the zero morphism. A straightforward computation, just like in $\cat{Top}$, shows that $\partial_{n-1}\partial_n=0$ and thus we have a chain complex of abelian groups $(C_{\bullet}(X),\partial_{\bullet})$. We denote the homology groups of this chain complex by $H_n(X)$ and we call them the \textbf{singular homology groups} of $X$. In particular, by definition the abelian group $H_n(X)=\ker\partial_n/\textnormal{Im}\partial_{n+1}$ are the $n$-cycles modulo boundaries.

Let $f\colon X\to Y$ be a map of pseudotopological spaces. If $\sigma:|\Delta^n|\to X$ is a singular $n$-simplex on $X$, then $f\circ\sigma$ is a singular simplex on $Y$. Thus we have an induced chain map $f_{\#}\colon C_n(X)\to C_n(Y)$. One can show this map respects boundary operators and therefore induces a map $f_*\colon H_n(X)\to H_n(Y)$ on homology. 

Let $X$ be a pseudotopological space and let $A$ be a subspace. The chains on $A$ are also chains on $X$, that is there is an inclusion of subgroups $C_n(A)\hookrightarrow C_n(X)$. Let $C_n(X,A)$, be the quotient group $C_n(X)/C_n(A)$. The boundary map $\partial_n\colon C_n(X)\to C_{n-1}(X)$ takes $C_n(A)$ to $C_{n-1}(A)$, hence it induces a quotient boundary map $\partial_n\colon C_n(X,A)\to C_{n-1}(X,A)$. Therefore we have a chain complex of abelian groups $(C_{\bullet}(X,A),\partial_{\bullet})$.
The \textbf{relative singular homology groups} $H_n(X,A)$, are the homology groups of the this chain complex. Standard arguments from homological algebra give the following:

\begin{theorem}
\label{theorem:long_exact_homology}
    Given a pair of pseudotopological spaces $(X,A)$, there exists a long exact sequence of homology groups:
    \[\cdots\to H_n(A)\xrightarrow{i_*} H_n(X)\xrightarrow{j_*} H_{n}(X,A)\xrightarrow{\partial}H_{n-1}(A)\xrightarrow{i_*}H_{n-1}(X)\to\cdots \to H_0(X,A)\to 0\]
where $i\colon C_n(A)\to C_n(X)$ is the inclusion and $j\colon C_n(X)\to C_n(X,A)$ is the quotient map. 
\end{theorem}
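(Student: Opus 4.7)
The plan is to reduce this to the standard zig-zag (snake) lemma of homological algebra applied to a short exact sequence of chain complexes. The only thing requiring attention is verifying that the analogue of the classical short exact sequence
\[0 \to C_\bullet(A) \xrightarrow{i_\#} C_\bullet(X) \xrightarrow{j_\#} C_\bullet(X,A) \to 0\]
genuinely holds in the pseudotopological setting, after which the long exact sequence follows formally and the connecting homomorphism is forced upon us.

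First I would verify that $i_\#$ is injective. By \cref{def:subspace_convergence}, the subspace convergence on $A$ is characterized by the universal property that a map $\sigma\colon |\Delta^n|\to A$ is continuous in $\cat{PsTop}$ if and only if its post-composition with the inclusion $A\hookrightarrow X$ is continuous. Consequently, distinct continuous maps $|\Delta^n|\to A$ produce distinct continuous maps $|\Delta^n|\to X$ under post-composition with the inclusion, so $i_\#$ is injective on each basis of singular simplices and hence as an abelian group homomorphism. Furthermore, $i_\#$ commutes with the singular boundary operator because face restrictions of a singular simplex in $A$ are again singular simplices in $A$, so $i_\#$ is a chain map.

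Next I would observe that by the definition given just before the theorem, $C_n(X,A)=C_n(X)/C_n(A)$, where $C_n(A)$ is identified with its image under $i_\#$, and the quotient boundary $\overline{\partial}_n\colon C_n(X,A)\to C_{n-1}(X,A)$ is well defined since $\partial_n$ preserves $C_n(A)$. The quotient map $j_\#$ is then tautologically surjective, and exactness in the middle holds by construction, giving a short exact sequence of chain complexes. Applying the zig-zag lemma from homological algebra produces the long exact sequence
\[\cdots\to H_n(A)\xrightarrow{i_*} H_n(X)\xrightarrow{j_*} H_n(X,A)\xrightarrow{\partial} H_{n-1}(A)\to\cdots\]
with connecting homomorphism defined as follows: given $[\overline{\alpha}]\in H_n(X,A)$ represented by $\alpha\in C_n(X)$, the chain $\partial_n\alpha\in C_{n-1}(X)$ maps to zero in $C_{n-1}(X,A)$, hence lies in the image of $i_\#$, and the resulting class in $H_{n-1}(A)$ is $\partial[\overline{\alpha}]$. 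A routine diagram chase shows this is well defined and makes the sequence exact. Finally, to terminate the sequence on the right at $H_0(X,A)\to 0$, I would note that $C_{-1}(X)=C_{-1}(A)=0$ by convention so exactness at $H_0(X,A)$ with target $0$ is automatic.

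The only place where a pseudotopological subtlety enters is the first paragraph of the verification, where one must know that the subspace $A\subset X$ has exactly the right convergence structure for singular simplices into $A$ to be identified with singular simplices into $X$ whose image lies in $A$; once this is in hand the proof is purely algebraic. In particular there is no reliance on the ambient category being $\cat{Top}$ or $\cat{Cl}$ as opposed to $\cat{PsTop}$, so no real obstacle arises.
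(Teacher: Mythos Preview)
Your proposal is correct and is essentially the approach the paper takes: the paper simply states ``Standard arguments from homological algebra give the following'' without providing any proof, and your argument spells out precisely those standard arguments (short exact sequence of chain complexes plus the zig-zag lemma). Your additional care in verifying that singular simplices into $A$ coincide with singular simplices into $X$ landing in $A$, via the subspace convergence, is a worthwhile observation that the paper omits.
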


Let $G$ be an abelian group. We define $C_n(X;G)$ to be the singular chains on $X$ with coefficients in $G$, that is $C_n(X;G)=C_n(X)\otimes G$. We still get a chain complex $(C_{\bullet}(X;G),\partial_{\bullet})$ whose homology groups we denote by $H_n(X;G)$. We call these the singular homology groups of $X$ with coefficients in $G$.

For a chain complex of abelian groups $(C_{\bullet},\partial_{\bullet})$, let $(C_G^{\bullet},\delta^{\bullet})$ be its associated cochain complex, with coefficients in $G$. Elements of $C_G^n$ are called $n$-\textbf{cochains} value in $G$. More specifically, $C_G^n=\textnormal{Hom}_{\cat{Ab}}(C_n,G)$. The \textbf{coboundary operator} $\delta^{n-1}\colon C_G^{n-1}\to C_G^{n}$ is the dual of the boundary operator; for a cochain $\alpha\in C_{G}^{n-1}$ we have $\delta^{n-1}(\alpha)=\alpha\circ \delta_n$. As $\partial\partial=0$ it then follows that $\delta\delta=0$ and thus $(C_G^{\bullet},\delta^{\bullet})$ is indeed a cochain of groups with coefficients in $G$. The cohomology groups of this cochain complex are called \textbf{singular cohomoly groups with coefficients in} $G$ of $X$. We denote these cohomology groups by $H^n(X;G)$.

Applying this construction to singular chain complexes of pseudotopological spaces, we obtain singular cochains of pseudotopological spaces. That is, we define $C^n(X;G)=C^n_G(X)$. For a pair of spaces $(X,A)$,  the short exact sequence 
\[0\to C_n(A)\to C_n(X)\to C_n(X)/C_n(A)\to 0\]
dualizes to the following short exact sequence
\[0\to C_n(X,A;G)\to C_n(X;G)\to C_n(A;G)\to 0\]
where by definition $C_n(X,A;G)=\textnormal{Hom}_{\cat{Ab}}(C_n(X)/C_n(A),G)=\textnormal{Hom}_{\cat{Ab}}(C_n(X,A),G)$. As we have a short exact sequence of cochain complexes, standard arguments in homological algebra give us the following.

\begin{theorem}
\label{theorem:long_exact_cohomology}
    Given a pair of pseudotopological spaces $(X,A)$, there exists a long exact sequence of cohomology groups:
    \[\cdots\to H^n(X,A;G)\to H^n(X;G)\to H^{n}(A;G)\to H^{n+1}(X,A;G)\to H^{n-1}(X;G)\to\cdots.\] 
\end{theorem}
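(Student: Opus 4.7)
The proof is a direct application of standard homological algebra to the short exact sequence of cochain complexes identified in the paragraph preceding the theorem, so the plan is to justify that this sequence really is short exact and then invoke the zig-zag / snake lemma.

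First I would start from the short exact sequence of chain complexes
\[
0 \to C_n(A) \xrightarrow{i_\#} C_n(X) \xrightarrow{j_\#} C_n(X,A) \to 0,
\]
which holds by the very definition $C_n(X,A) = C_n(X)/C_n(A)$. The crucial observation is that $C_n(X)$ is the free abelian group on singular $n$-simplices $\sigma\colon \simp{n} \to X$, and $C_n(A)$ is the free abelian subgroup on those $\sigma$ whose image lies in $A$. Hence $C_n(X,A)$ is itself free abelian, generated by the cosets of the simplices whose image is not contained in $A$, and the sequence splits in each degree (as a sequence of abelian groups, though not as chain complexes). Splitness is exactly what is needed to guarantee that applying the contravariant functor $\textnormal{Hom}_{\cat{Ab}}(-,G)$ preserves short exactness.

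Next, applying $\textnormal{Hom}_{\cat{Ab}}(-,G)$ degreewise and recalling the definitions $C^n(X;G) = \textnormal{Hom}_{\cat{Ab}}(C_n(X),G)$ and $C^n(X,A;G) = \textnormal{Hom}_{\cat{Ab}}(C_n(X,A),G)$ produces a short exact sequence of cochain complexes
\[
0 \to C^n(X,A;G) \xrightarrow{j_\#^*} C^n(X;G) \xrightarrow{i_\#^*} C^n(A;G) \to 0.
\]
One checks that the coboundary $\delta$ defined on each factor commutes with $j_\#^*$ and $i_\#^*$ since these maps are duals of the chain maps $j_\#$ and $i_\#$ which commute with $\partial$; this is formally the same verification as in the topological case and requires nothing specific to $\cat{PsTop}$.

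Finally, I would apply the standard zig-zag lemma (equivalently, the snake lemma applied to cycles modulo boundaries) to this short exact sequence of cochain complexes to extract the long exact cohomology sequence
\[
\cdots \to H^n(X,A;G) \to H^n(X;G) \to H^n(A;G) \xrightarrow{\delta^*} H^{n+1}(X,A;G) \to \cdots,
\]
where $\delta^*$ is the connecting homomorphism produced by the snake lemma. Since this is pure homological algebra on a short exact sequence of abelian-group cochain complexes, no pseudotopological hypothesis intervenes beyond what is already used to define the cochain complexes. Honestly there is no real obstacle here; the only subtlety worth being explicit about is the freeness/splitness of $0 \to C_n(A) \to C_n(X) \to C_n(X,A) \to 0$, since without it the dualized sequence would only be left exact and the whole argument would fail.
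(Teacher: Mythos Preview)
Your proposal is correct and follows exactly the route the paper takes: the paper records the dualized short exact sequence $0\to C^n(X,A;G)\to C^n(X;G)\to C^n(A;G)\to 0$ and then simply says ``standard arguments in homological algebra give us the following.'' You have supplied those standard arguments explicitly, including the freeness/splitness point that guarantees $\textnormal{Hom}_{\cat{Ab}}(-,G)$ preserves exactness, which the paper leaves implicit.
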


Given a map of pseudotopological spaces $f\colon X\to Y$, composing with $f$ induces a map $f^*\colon H^n(Y;G)\to H^n(X;G)$. The following is an expected result. 

\begin{theorem}
    If $f,g\colon X\to Y$ are homotopic maps of pseudotopological spaces, then $f_*=g_*\colon H_n(X;G)\to H_n(Y;G)$ and $f^*\colon H^n(Y;G)\to H^n(X;G)$ for any $n\ge 0$.
\end{theorem}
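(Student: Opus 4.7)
The plan is to adapt the classical prism-operator argument from $\cat{Top}$ to $\cat{PsTop}$, producing a natural chain homotopy between $f_\#$ and $g_\#$ at the level of singular chains. Let $H\colon X\times I\to Y$ with $H_0=f$ and $H_1=g$. Label the vertices of $\simp{n}\times \{0\}$ by $v_0,\dots,v_n$ and those of $\simp{n}\times\{1\}$ by $w_0,\dots,w_n$. The prism $\simp{n}\times I$ admits a standard simplicial decomposition into $(n+1)$-simplices $[v_0,\dots,v_i,w_i,\dots,w_n]$ for $i=0,\dots,n$, where each bracket denotes the affine map $\simp{n+1}\to \simp{n}\times I$. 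For any singular $n$-simplex $\sigma\colon \simp{n}\to X$, define
\[
P(\sigma)=\sum_{i=0}^n(-1)^i\,H\circ(\sigma\times \id_I)\circ[v_0,\dots,v_i,w_i,\dots,w_n],
\]
and extend $\mathbb{Z}$-linearly to a homomorphism $P\colon C_n(X)\to C_{n+1}(Y)$.

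I first need to verify that each summand is a bona fide singular $(n+1)$-simplex in $Y$, i.e.\ a continuous map $\simp{n+1}\to Y$. The affine map $[v_0,\dots,v_i,w_i,\dots,w_n]\colon \simp{n+1}\to \simp{n}\times I$ lands in a product of topological spaces, and by \cref{prop:finite_cw_complexes_are_topological} the product $\simp{n}\times I$ as a pseudotopological space coincides with the ordinary topological product. Therefore the affine map is continuous in $\cat{PsTop}$. The map $\sigma\times \id_I\colon \simp{n}\times I\to X\times I$ is continuous since products of continuous maps are continuous in any Cartesian category, and $H$ is continuous by hypothesis; composition then yields a continuous map $\simp{n+1}\to Y$.

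With the prism operator in hand, the combinatorial identity
\[
\partial_{n+1}\circ P + P\circ \partial_n \;=\; g_\#-f_\#
\]
holds by exactly the same calculation as in the classical topological setting: it is an identity that only uses the formal properties of the face maps of $\simp{n}\times I$ together with $H_0=f,H_1=g$, and every intermediate map is continuous by the reasoning above. Consequently $P$ is a chain homotopy from $f_\#$ to $g_\#$, so $f_\#$ and $g_\#$ induce the same map on homology. Tensoring the whole chain homotopy with $G$ (which preserves chain homotopies) gives $f_*=g_*\colon H_n(X;G)\to H_n(Y;G)$ for every $n\ge 0$ and every abelian group $G$.

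For cohomology, apply $\Hom_{\cat{Ab}}(-,G)$ to the chain homotopy identity. Writing $P^*\colon C^{n+1}(Y;G)\to C^n(X;G)$ for the dual of $P$, one obtains $\delta^n\circ P^* + P^*\circ\delta^{n+1}=g^{\#}-f^{\#}$ at the cochain level, hence $f^*=g^*\colon H^n(Y;G)\to H^n(X;G)$. The main technical point to be careful about is simply that every continuous map appearing in $P$ is continuous in $\cat{PsTop}$; once this is checked using \cref{prop:finite_cw_complexes_are_topological} and the Cartesian-closed structure of $\cat{PsTop}$ from \cref{example:function_spaces}(3), the rest is the standard homological-algebra computation, which I will not reproduce in detail.
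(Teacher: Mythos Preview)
Your argument is correct and is precisely the approach the paper intends: the paper's own proof consists of the single sentence ``The proof is a verbatim replication of a standard one used for topological spaces, for example see \cite[Theorem 4.4.9]{Spanier1966Algebraic},'' and the standard proof referred to there is exactly the prism-operator construction you have written out. One small remark: when you argue that $\simp{n}\times I$ in $\cat{PsTop}$ agrees with the ordinary topological product, the cleaner justification is \cref{remark:colimits} (the inclusion $\cat{Top}\hookrightarrow\cat{PsTop}$ preserves limits, hence products), rather than \cref{prop:finite_cw_complexes_are_topological}, which concerns colimits; but the conclusion is the same and the rest of the argument is sound.
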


\begin{proof}
    The proof is a verbatim replication of a standard one used for topological spaces, for example see \cite[Theorem 4.4.9]{Spanier1966Algebraic}.
\end{proof}

\begin{theorem}
\label{theorem:weak_homotopy_iso_on_homology}
    A weak homotopy equivalence $f\colon X\to Y$ of pseudotopological spaces induces isomorphisms
    $f_* \colon H_n(X;G)\to H_n(Y;G)$ and $f^*\colon H^n(Y;G)\to H^n(X;G)$ for all $n$ and all coefficient groups $G$.
\end{theorem}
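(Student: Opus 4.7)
The plan is to adapt the classical strategy from $\cat{Top}$: first use the mapping cylinder to replace $f$ by an inclusion whose relative homotopy groups vanish, then show that the relative (co)homology of that inclusion vanishes via a compression argument on singular simplices.

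First I would apply \cref{prop: mapping cylinder} to factor $f = r \circ i$, where $i\colon X \to M_f$ is the inclusion $x \mapsto [x,1]$ and $r\colon M_f \to Y$ is a homotopy equivalence. Homotopy equivalences induce isomorphisms on singular (co)homology by the preceding theorem, so $r_*$ and $r^*$ are isomorphisms, and it suffices to prove the conclusion for $i$. Since $f$ and $r$ are both weak equivalences (homotopy equivalences are weak equivalences), so is $i$. The long exact sequence of homotopy groups of the pair $(M_f, X)$ from \cref{theorem:homotopy_exact_sequence}, together with the definition of weak equivalence, then gives $\pi_n(M_f, X, x_0) = 0$ for all $n \geq 1$ and every basepoint $x_0 \in X$.

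Next, by the long exact sequences in \cref{theorem:long_exact_homology} and \cref{theorem:long_exact_cohomology}, it is enough to show $H_n(M_f, X; G) = 0$ and $H^n(M_f, X; G) = 0$ for every $n$ and every coefficient group $G$. Because $C_\bullet(M_f)/C_\bullet(X)$ is a chain complex of free abelian groups, the universal coefficient theorems (which are purely homological-algebra statements and thus apply in our setting) reduce both vanishings to a single integral statement: $H_n(M_f, X; \mathbb{Z}) = 0$ for all $n$.

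The heart of the argument is the following compression lemma, applied with $(Z, A) = (M_f, X)$: if a pair $(Z, A)$ of pseudotopological spaces satisfies $\pi_n(Z, A, a) = 0$ for all $n \geq 1$ and every $a \in A$, then $H_n(Z, A; \mathbb{Z}) = 0$ for all $n$. The proof adapts \cite[Proposition 4.21]{hatcher2002algebraictopology} essentially verbatim: given a relative $n$-cycle $\alpha = \sum_i n_i \sigma_i$, inductively homotope each singular simplex $\sigma_i\colon (\simp{n},\partial\simp{n}) \to (Z, A)$ to a map landing in $A$, assembling these homotopies into singular $(n+1)$-simplices that realize $\alpha$ as a relative boundary. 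At each step the hypothesis $\pi_k(Z,A) = 0$ supplies the required homotopy on an individual simplex, and the HEP for the finite CW pair $(\simp{n}, \partial\simp{n})$ provided by \cref{corollary:CW_pairs_HEP} allows one to extend a partial homotopy defined on $\partial\simp{n}$ (inherited from previously compressed faces) to all of $\simp{n}$.

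The main obstacle is ensuring that the continuous maps constructed during compression (homotopies on $\simp{n}\times I$, maps out of $\partial\simp{n}$, pasted homotopies on CW-subdivisions of these) are genuinely morphisms in $\cat{PsTop}$ rather than just in $\cat{Top}$. This is where \cref{prop:finite_cw_complexes_are_topological} is decisive: every auxiliary domain appearing in the compression argument is a finite CW complex, and hence its pseudotopological structure coincides with its topological one. Consequently, the classical combinatorial definitions of these maps automatically produce morphisms in $\cat{PsTop}$, the pasting lemma (\cref{prop:pasting_lemma}) applies as usual, and the standard argument transports directly. Combining all of the above yields the desired isomorphisms $f_*$ and $f^*$.
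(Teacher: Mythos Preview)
Your overall strategy matches the paper's proof exactly: replace $Y$ by the mapping cylinder $M_f$, invoke the long exact sequences of \cref{theorem:homotopy_exact_sequence,theorem:long_exact_homology,theorem:long_exact_cohomology} for the pair $(M_f,X)$, reduce to showing that an $n$-connected pair has trivial relative (co)homology in degrees $\le n$, and appeal to the universal coefficient theorem for cohomology. Your observation that \cref{prop:finite_cw_complexes_are_topological} makes all the auxiliary domains genuinely topological, so that the classical constructions are automatically $\cat{PsTop}$-morphisms, is also precisely the point the paper relies on.

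There is, however, a genuine gap in your compression step. You write that you will ``inductively homotope each singular simplex $\sigma_i\colon (\simp{n},\partial\simp{n}) \to (Z,A)$ to a map landing in $A$,'' but for an arbitrary relative cycle $\alpha=\sum_i n_i\sigma_i$ the individual simplices $\sigma_i$ do \emph{not} carry $\partial\simp{n}$ into $A$; only the algebraic boundary $\partial\alpha$ lies in $C_{n-1}(A)$, and it does so via cancellations among faces of \emph{different} $\sigma_i$'s. Consequently there is no way to apply $\pi_n(Z,A)=0$ to a single $\sigma_i$, and there is no well-posed ``previously compressed face'' for a given $\sigma_i$ to inherit. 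The paper (following Hatcher's actual proof of his Proposition 4.21) repairs this by first building a finite $\Delta$-complex $K$ from disjoint copies of $\simp{n}$, one for each $\sigma_i$, glued along $(n{-}1)$-faces whenever the corresponding restrictions of the $\sigma_i$'s coincide; the subcomplex $L\subset K$ consists of those $(n{-}1)$-faces that survive in $\partial\alpha$. One then has a single map of pairs $\sigma\colon (K,L)\to (Z,A)$ to which the compression lemma applies (using $\pi_k(Z,A)=0$ for $k\le n$ and the HEP for the finite CW pair $(K,L)$ from \cref{corollary:CW_pairs_HEP}), yielding a homotopy rel $L$ of $\sigma$ into $A$. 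With this correction your argument is complete and coincides with the paper's.
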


\begin{proof}
    Replace $Y$ by the mapping cylinder $M_f$ and note that from the long exact homotopy, homology and cohomology exact sequences of the pair $(M_f,X)$ (\cref{theorem:homotopy_exact_sequence,theorem:long_exact_homology,theorem:long_exact_cohomology}), it suffices to show that if $(A, B)$ is an $n$-connected pair of path connected pseudotopological spaces, then $H_i(A, B; G) = 0$ and $H^i(A, B; G) = 0$ for all $i \le n$ and all $G$. 
    
    Let $\alpha = \sum_j n_j \sigma_j$ be a relative cycle representative of an element in $H_k(A, B; G)$, for some singular $k$ simplices $\sigma_j \colon |\Delta^k|\to A$. Build a finite $\Delta$ complex $K$ from a disjoint union of $k$ simplices, one for each $\sigma_j$, by identifying all $(k-1)$ dimensional faces of these $k$ simplices for which the corresponding restrictions of the $\sigma_j$'s are equal. This $\Delta$ complex has its usual topological structure by \cref{prop:finite_cw_complexes_are_topological}. 
    Thus the $\sigma_j$'s induce a map $\sigma \colon K\to A$. Since $\alpha$ is a relative cycle, $\partial\alpha$ is a chain in $A$. 
    
    Let $L \subset K$ be the subcomplex consisting of $(k-1)$ simplices corresponding to the singular $(k-1)$ simplices in $\partial\alpha$, that is $\sigma(L) \subset A$. The chain $\alpha$ is the image under the chain map $\sigma_{\#}$ of a chain  $\tilde{\alpha}$ in $K$, with $\partial\tilde{\alpha}$ a chain in $L$. In relative homology we then have $\sigma_*[\tilde{\alpha}]=[\alpha]$. If we assume $\pi_i(A, B) = 0$ for $i \le k$, then $\sigma\colon  (K, L)\to (A, B)$ is homotopic rel $L$ to a map with image in $B$. This fact was shown when arguing for exactness at $\pi_n(X,A,x_0)$ in the proof of \cref{theorem:homotopy_exact_sequence}. Hence $\sigma_*[\tilde{\alpha}]$ is in the image of the map $H_k(A, A; G)\to H_k(A, B; G)$, and since $H_k(A, A; G) = 0$ we conclude that $[\alpha] = \sigma_{*}[\tilde{\alpha}] = 0$. This proves the result for homology, and the result for cohomology then follows by the universal coefficient theorem (the universal coefficient theorem is an entirely algebraic result involving chain complexes and so it doesn't depend on the category of spaces we are working with).
\end{proof}

\section{Main results}
\label{section:main_results}

In this section we show our main result. Recall $\cat{Gph}$ and $\cat{DiGph}$, the categories of graphs and digraphs with graph morphisms, respectively. As per the discussion in \cref{section:background}, we can assume that our graphs are reflexive relations by associating them with their spatial digraphs.

\subsection{Directed Vietoris-Rips complexes}

Let $V=$ be a finite set.  An \textbf{(abstract) simplicial complex $K$} on $V$ is a collection of nonempty subsets of $V$ such that
\begin{enumerate}
\item If $\sigma \in K$ and $\tau \subset \sigma$, then $\tau \in K$.
\item $\{v\} \in K$ for every $v\in V$.
\end{enumerate}
The set $V$ is called the \textbf{vertex set} of $K$. If $\sigma\in K$ with cardinality $n+1$, for some $n\ge 0$,  we say that $\sigma$ is an $n$\textbf{-simplex} of $K$. If $K'$ is any other simplicial complex, a \textbf{simplicial map} $f \colon K \to K'$ is a function from the vertex set of $K$ to the vertex set of $K'$ such that if $\sigma$ is a simplex of $K$, then $f(\sigma)$ is a simplex of $K'$. If $\tau\subset \sigma$ we say that $\tau$ is a \textbf{face} of $\sigma$. 

We say that $L$ is a \textbf{subcomplex} of $K$ if $L\subset K$ and $L$ is a simplicial complex. In addition, if every simplex of $K$ whose vertices are in $L$ is also a simplex of $L$, we say that $L$ is a \textbf{full subcomplex} of $K$. Let $\cat{Simp}$ denote the category of abstract simplicial complexes with morphisms given by simplicial maps. 

Let $K$ be a simplicial complex. We think of $K$ as a category whose objects are the simplices in $K$ and there is a unique morphisms $\sigma\to \tau$ if and only if $\sigma\subset \tau$. Pick a total order on the vertex set of $K$, $V$, and define a functor $F\colon K\to \cat{PsTop}$ in the following way. For any $n$-simplex $\sigma\in K$, let $F(\sigma) = \simp{n}$ be the standard n-simplex. The order on the vertex set then specifies a unique bijection between the elements of $\sigma$ and vertices of $\simp{n}$, ordered in the usual way $e_0 < e_1 < \dots < e_n$. If $\tau \subset \sigma$ is a face of $\sigma$, then this bijection specifies a unique $m$-dimensional face of $\simp{n}$. Define $F(\tau) \to F(\sigma)$ to be the unique affine linear embedding of $\simp{m}$ as that distinguished face of $\simp{n}$, such that the map on vertices is order-preserving. The \textbf{geometric realization} of $K$, $|K|$ is defined as 
\[|K|=\colimit_{\sigma\in K}F(\sigma).\]
In particular $|K|=\coprod_{\sigma\in K}F(\sigma)/\sim$, where $x\sim F(\tau\subset \sigma)(x)$, that is a point $x\in F(\tau)$ is identified with its image under the inclusion $F(\tau\subset \sigma)\colon F(\tau)\hookrightarrow F(\sigma)$, for all $\tau\subset \sigma$.

Note that if $K$ is finite, its geometric realization $|K|$ is an example of a finite CW complex and is thus a topological space by \cref{prop:finite_cw_complexes_are_topological}. 

\begin{definition}
    For $(X,E)\in \cat{DiGph}$, let $\dVR{X,E}$, the \textbf{directed Vietoris-Rips complex}, be the set (simplicial complex) 
    \[\dVR{X,E}=\{\sigma \subset X\,|\, \exists \textnormal{ an ordering of points in } \sigma,\sigma=\{x_0,\dots, x_n\} \textnormal{ with } i<j\Longrightarrow x_iEx_j\}\] This is also sometimes called the \textbf{directed clique complex} of $(X,E)$ \cite{masulli2016dynamics}. If $(X,E)$ had no bidirected edges, $\dVR{X,E}$ would correspond to totally ordered subsets of $(X,E)$. Thus $\dVR{X,E}$ is a generalization of the order complex assigned to a partially ordered set. The order complex was shown by McCord \cite{mccord1966singular} to be weakly homotopy equivalent to its underlying poset.
    
    For $(X,E)\in \cat{Gph}$ let $\VR{X,E}$ be the set of clique subgraphs of $(X,E)$, that is, \[\VR{X,E}=\{\sigma\subset X\,|\, \forall x,x'\in \sigma, xEx'\}.\] 
    We call $\VR{X,E}$ the \textbf{Vietoris-Rips complex} of $(X,E)$. This is also sometimes called the \textbf{clique complex} of $(X,E)$. 
\end{definition}

Note that $\dVR{X,E}$ and $\VR{X,E}$ coincide when $(X,E)$ is a graph. See \cref{fig:vr_examples} for some examples of the $\dVR{X,E}$ construction.

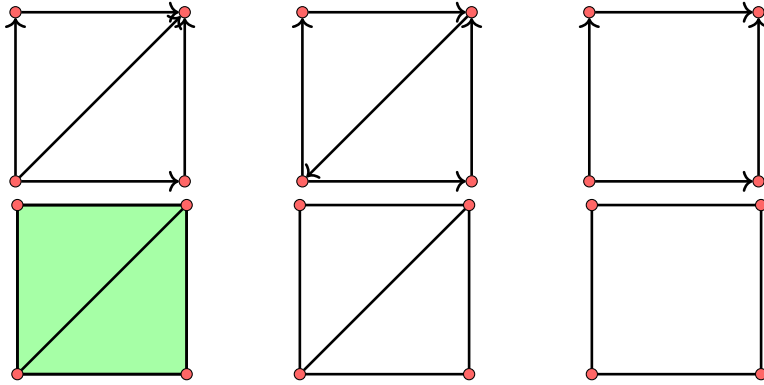
\begin{figure}[H]
\centering
    \begin{tikzpicture}[scale=0.75]       
        \node[shape=circle,fill=red!60,draw=black,scale=0.375] (A) at (1,-4) {};          
        \node[shape=circle,fill=red!60,draw=black,scale=0.375] (B) at (4,-4) {};
        \node[shape=circle,fill=red!60,draw=black,scale=0.375] (C) at (1,-1) {};
        \node[shape=circle,fill=red!60,draw=black,scale=0.375] (D) at (4,-1) {};         
        \path [->,line width=1](A) edge node[left] {} (B);
        \path [->,line width=1](A) edge node[left] {} (C);
        \path [->,line width=1](A) edge node[left] {} (D);
        \path [->,line width=1](B) edge node[left] {} (D); 
        \path [->,line width=1](C) edge node[left] {} (D);   
    \end{tikzpicture}
    \hspace{2em} 
    \begin{tikzpicture}[scale=0.75]       
        \node[shape=circle,fill=red!60,draw=black,scale=0.375] (A) at (1,-4) {};          
        \node[shape=circle,fill=red!60,draw=black,scale=0.375] (B) at (4,-4) {};
        \node[shape=circle,fill=red!60,draw=black,scale=0.375] (C) at (1,-1) {};
        \node[shape=circle,fill=red!60,draw=black,scale=0.375] (D) at (4,-1) {};         
        \path [->,line width=1](A) edge node[left] {} (B);
        \path [->,line width=1](A) edge node[left] {} (C);
        \path [->,line width=1](D) edge node[left] {} (A);
        \path [->,line width=1](B) edge node[left] {} (D); 
        \path [->,line width=1](C) edge node[left] {} (D);   
    \end{tikzpicture}
    \hspace{2em}
        \begin{tikzpicture}[scale=0.75]       
        \node[shape=circle,fill=red!60,draw=black,scale=0.375] (A) at (1,-4) {};          
        \node[shape=circle,fill=red!60,draw=black,scale=0.375] (B) at (4,-4) {};
        \node[shape=circle,fill=red!60,draw=black,scale=0.375] (C) at (1,-1) {};
        \node[shape=circle,fill=red!60,draw=black,scale=0.375] (D) at (4,-1) {};         
        \path [->,line width=1](A) edge node[left] {} (B);
        \path [->,line width=1](A) edge node[left] {} (C);
        \path [->,line width=1](B) edge node[left] {} (D); 
        \path [->,line width=1](C) edge node[left] {} (D);   
    \end{tikzpicture}
    
    \begin{tikzpicture}[scale=0.75]                       
        \draw[fill=green!35, line width=1] (1,-4) rectangle (4,-1);
        \node[shape=circle,fill=red!60,draw=black,scale=0.375] (A') at (1,-4) {};          
        \node[shape=circle,fill=red!60,draw=black,scale=0.375] (B') at (4,-4) {};
        \node[shape=circle,fill=red!60,draw=black,scale=0.375] (C') at (1,-1) {};
        \node[shape=circle,fill=red!60,draw=black,scale=0.375] (D') at (4,-1) {};         
        \path [line width=1](A') edge node[left] {} (B');
        \path [line width=1](C') edge node[left] {} (D');
        \path [line width=1](A') edge node[left] {} (C');
        \path [line width=1](B') edge node[left] {} (D');
        \path [line width=1](A') edge node[left] {} (D'); 
    \end{tikzpicture}
    \hspace{2em} 
    \begin{tikzpicture}[scale=0.75]                       
        \node[shape=circle,fill=red!60,draw=black,scale=0.375] (A') at (1,-4) {};          
        \node[shape=circle,fill=red!60,draw=black,scale=0.375] (B') at (4,-4) {};
        \node[shape=circle,fill=red!60,draw=black,scale=0.375] (C') at (1,-1) {};
        \node[shape=circle,fill=red!60,draw=black,scale=0.375] (D') at (4,-1) {};         
        \path [line width=1](A') edge node[left] {} (B');
        \path [line width=1](C') edge node[left] {} (D');
        \path [line width=1](A') edge node[left] {} (C');
        \path [line width=1](B') edge node[left] {} (D');
        \path [line width=1](A') edge node[left] {} (D'); 
    \end{tikzpicture}
    \hspace{2.3em} 
    \begin{tikzpicture}[scale=0.75]                       
        \node[shape=circle,fill=red!60,draw=black,scale=0.375] (A') at (1,-4) {};          
        \node[shape=circle,fill=red!60,draw=black,scale=0.375] (B') at (4,-4) {};
        \node[shape=circle,fill=red!60,draw=black,scale=0.375] (C') at (1,-1) {};
        \node[shape=circle,fill=red!60,draw=black,scale=0.375] (D') at (4,-1) {};         
        \path [line width=1](A') edge node[left] {} (B');
        \path [line width=1](C') edge node[left] {} (D');
        \path [line width=1](A') edge node[left] {} (C');
        \path [line width=1](B') edge node[left] {} (D');
    \end{tikzpicture}
\caption{Three different digraphs (top) and the geometric realizations of their respective directed Vietoris-Rips complexes (bottom).}
\label{fig:vr_examples}
\end{figure}

\begin{lemma}
\label{lemma:complex_of_nbhd_is_a_cone}
    Let $(X,E)$ be an digraph (resp. a graph). For each $x\in X$, if $U_x=\{y\in X\,|\, yEx\}$, then $|\dVR{U_x}|$ (resp. $|\VR{U_x}|$) is a cone with apex corresponding to $x$ and is therefore contractible (here we consider $U_x$ as an induced subgraph of $(X,E)$).
\end{lemma}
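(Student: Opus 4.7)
The plan is to show that $\dVR{U_x}$ has the structure of a simplicial cone with apex $x$, so that $|\dVR{U_x}|$ becomes a topological cone, which is contractible via the straight line deformation to its apex.

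First I would verify that $x \in U_x$: since we interpret $(X,E)$ as its associated spatial digraph, the relation is reflexive, so $xEx$ and hence $x \in U_x$. Next comes the key combinatorial step, namely the claim that for every simplex $\sigma \in \dVR{U_x}$, the set $\sigma \cup \{x\}$ is again a simplex of $\dVR{U_x}$. Given an ordering $\sigma = \{v_0, \ldots, v_n\}$ with $v_i E v_j$ whenever $i < j$, I append $x$ as the largest element. This requires checking $v_i E x$ for each $i$, which holds precisely because $v_i \in U_x$. Hence the appended ordering witnesses that $\sigma \cup \{x\} \in \dVR{U_x}$. Together with the downward closure property of simplicial complexes, this shows that if $L$ denotes the full subcomplex of $\dVR{U_x}$ spanned by $U_x \setminus \{x\}$, then $\dVR{U_x} = L \cup \{\tau \cup \{x\} : \tau \in L \cup \{\varnothing\}\}$, i.e., $\dVR{U_x}$ is the simplicial cone $\{x\} * L$ with apex $x$. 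For the graph case, the same argument works because $\VR{U_x}$ is the clique complex and symmetry of $E$ ensures $\sigma \cup \{x\}$ remains a clique.

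Passing to geometric realizations, $|\dVR{U_x}| = |\{x\} * L|$ is the topological cone on $|L|$, with the cone structure given on each simplex containing $x$ by the straight-line parametrization from $x$ to the opposite face. Since $\dVR{U_x}$ is finite, $|\dVR{U_x}|$ is a finite CW complex and thus a genuine topological space by \cref{prop:finite_cw_complexes_are_topological}. The contraction is then the straight-line homotopy $H\colon |\dVR{U_x}| \times I \to |\dVR{U_x}|$ defined by $H(p,t) = (1-t)p + tx$ on each closed simplex containing $x$; this is well-defined and continuous in $\cat{Top}$ by standard arguments on cones, and continuity in $\cat{PsTop}$ follows since $\cat{Top}$ sits as a full subcategory of $\cat{PsTop}$.

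The main potential obstacle is a categorical one rather than a combinatorial one: one must be sure that the cone contraction, which is classically a $\cat{Top}$-morphism, is also a $\cat{PsTop}$-morphism when $|\dVR{U_x}|$ is viewed inside $\cat{PsTop}$. This is handled cleanly by \cref{prop:finite_cw_complexes_are_topological}, which guarantees that the finite CW structure on $|\dVR{U_x}|$ agrees whether built in $\cat{Top}$ or in $\cat{PsTop}$, so there is no ambiguity about which structure carries the homotopy.
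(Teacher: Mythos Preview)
Your argument is correct and follows essentially the same route as the paper: set $V_x=U_x\setminus\{x\}$, observe that appending $x$ as the last vertex to any ordered simplex of $\dVR{V_x}$ yields a simplex of $\dVR{U_x}$ because $v_iEx$ for every $v_i\in U_x$, and conclude that $\dVR{U_x}$ is the simplicial cone $\{x\}*\dVR{V_x}$. The paper stops at ``cone, hence contractible''; your added remarks about the straight-line contraction and the invocation of \cref{prop:finite_cw_complexes_are_topological} to ensure the homotopy lives in $\cat{PsTop}$ are welcome extra care but not a different strategy.
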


\begin{proof}
    We prove the directed case, and the undirected case will thus follow. Let $x\in X$, $U_x=\{y\in X\,|\, yEx\}$, and $V_x=U_x\setminus \{x\}$. We prove that $\dVR{U_x}$ is a cone with apex $x$ over the simplicial complex $\dVR{V_x}$. Note that every simplex $n$-simplex of $\dVR{V_x}$ has an ordering of vertices of the form $y_0,\dots, y_n$, where $y_iEy_j$, for $i<j$ in $(X,E)$. Furthermore, since $y_i\in V_x$ for all $i$, we also have that $y_iEx$. Therefore $\{y_0,\dots ,y_n,x\}$ is a simplex in $\dVR{U_x}$. Furthermore every simplex of $\dVR{U_x}\setminus \dVR{V_x}$ is of the form $\{y_0,\dots,y_n,x\}$ where $y_0,\dots, y_n$ is an ordering of vertices of a simplex of $V_x$. Therefore $|\dVR{U_x}|$ is  cone with apex at $x$ and therefore contractible.
\end{proof}

\begin{proposition}
    $\dVR{-}\colon \cat{DiGph}\to \cat{Simp}$ and $\VR{-}\colon \cat{Gph}\to \cat{Simp}$ are functors.
\end{proposition}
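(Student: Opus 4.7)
The plan is to verify the three standard functoriality requirements in turn: (i) for every digraph $(X,E)$, the collection $\dVR{X,E}$ satisfies the axioms of an abstract simplicial complex; (ii) every digraph morphism $f\colon (X,E)\to (Y,F)$ induces a simplicial map, which I will take to be $f$ itself on vertex sets; and (iii) the assignment preserves identities and composition. The undirected case will follow by a slight simplification of the directed one, since $\VR{X,E}=\dVR{X,E}$ whenever $E$ is symmetric.

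For (i), singletons belong to $\dVR{X,E}$ vacuously, since the ordering condition on pairs with $i<j$ is empty for a one-element set. Closure under taking faces is immediate: if $\sigma=\{x_0,\dots,x_n\}$ admits a witnessing order and $\tau\subset \sigma$, then restricting that order to $\tau$ still satisfies the directed clique condition, so $\tau\in\dVR{X,E}$. These checks are routine.

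For (ii), I will adopt the spatial digraph convention, so $E,F$ are reflexive and the morphism condition reads $xEx'\Longrightarrow f(x)Ff(x')$. Given $\sigma=\{x_0,\dots,x_n\}\in \dVR{X,E}$ with $x_iEx_j$ for $i<j$, the goal is to exhibit an ordering of the set $f(\sigma)$ that witnesses $f(\sigma)\in \dVR{Y,F}$. The one nonroutine point, and the main thing to be careful about, is that $f$ need not be injective on $\sigma$, so a naive transfer of the order from $\sigma$ does not immediately give a total order on $f(\sigma)$. The plan is to order $f(\sigma)$ by first appearance: assign to each element of $f(\sigma)$ the minimum index in $\sigma$ of one of its preimages, and order $f(\sigma)$ accordingly. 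The morphism property then transports the edge relations recorded in $\sigma$ to the required relations in $f(\sigma)$, so $\dVR{f}\defeq f$ is a well-defined simplicial map.

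For (iii), since $\dVR{f}$ acts literally as $f$ on vertex sets, the equalities $\dVR{\id_X}=\id_{\dVR{X,E}}$ and $\dVR{g\circ f}=\dVR{g}\circ \dVR{f}$ hold on the nose. The same strategy shows that $\VR{-}\colon \cat{Gph}\to \cat{Simp}$ is a functor; in the undirected setting the step requiring care in (ii) disappears entirely, because the defining clique condition is symmetric and order-free, so any graph morphism automatically sends cliques to cliques.
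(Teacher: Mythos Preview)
Your proof is correct and follows essentially the same approach as the paper: define $\dVR{f}$ to be $f$ itself on vertex sets and verify that a witnessing order on $\sigma$ transports under $f$ to a witnessing order on $f(\sigma)$. You are in fact more careful than the paper's argument, which glosses over the non-injectivity issue you handle with the first-appearance order, and you also spell out the routine checks (the simplicial-complex axioms for $\dVR{X,E}$ and preservation of identities and composition) that the paper leaves implicit.
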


\begin{proof}
    We prove that $\dVR{-}$ and $\VR{-}$ send morphisms to themselves, that is if $f\colon (X,E)\to (Y,F)$ is a morphism of graphs, the induced morphisms by the functors will be the same set theoretic function $f$. We prove the directed case and the undirected case will follow immediately.

    Let $f\colon (X,E)\to (Y,F)$ be a graph homomorphism of digraphs. Let $\sigma\in \dVR{X,E}$. Then there exists an ordering of points in $\sigma$, $\sigma=\{x_0,\dots ,x_n\}$ such that $x_iEx_j$ for all $i<j$. Since $f$ is a graph morphism, we have $f(x_i)Ef(x_j)$ for all $i<j$ and thus $f(\sigma)$ is a simplex in $\dVR{Y,F}$.
\end{proof}

\begin{lemma}\label{lemma:subgraph_full_subcomplex}
    If $A$ is an induced subgraph of a digraph $X$, then $\dVR{A}$ is a full subcomplex of $\dVR{X}$. If $A$ is an induced subgraph of a graph $X$, then $\VR{A}$ is a full subcomplex of $\VR{X}$.
\end{lemma}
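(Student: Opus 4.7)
The plan is to unwind the definitions directly. Recall that a full subcomplex $L$ of a simplicial complex $K$ requires two things: (i) $L$ is a subcomplex of $K$, and (ii) every simplex of $K$ whose vertex set lies in $L$ already belongs to $L$. I would verify both points for the pair $\dVR{A} \subseteq \dVR{X}$, and then observe the undirected case falls out as the special case where $E$ is symmetric (so $\dVR{X,E}=\VR{X,E}$).

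First I would check the inclusion and subcomplex property. Let $\sigma\in \dVR{A}$; then by definition there is an ordering $\sigma=\{x_0,\ldots,x_n\}$ with $x_iE_A x_j$ whenever $i<j$. Since $A$ is an \emph{induced} subgraph of $X$, the relation $E_A$ is exactly the restriction of $E$ to $A$, so $x_iEx_j$ for $i<j$ in $X$ as well, showing $\sigma\in\dVR{X}$. That $\dVR{A}$ is closed under nonempty subsets is immediate from the definition: any subset of an ordered chain inherits the same ordering, so $\dVR{A}$ is itself a simplicial complex, hence a subcomplex of $\dVR{X}$.

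Second I would verify fullness. Suppose $\sigma\in \dVR{X}$ with all vertices in $A$. By definition of $\dVR{X}$, there is an ordering $\sigma=\{x_0,\ldots,x_n\}$ such that $x_iEx_j$ for $i<j$. Because every $x_i$ lies in $A$ and $E_A$ equals the restriction of $E$ to $A$, the same relations $x_iE_Ax_j$ hold for $i<j$, witnessing $\sigma\in\dVR{A}$. This is exactly the fullness condition.

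There is really no substantive obstacle here; the only thing to be careful about is the use of "induced subgraph," since without that hypothesis $E_A$ could be strictly smaller than the restriction of $E$ and fullness would fail. Once this is noted, the undirected statement is either a repetition of the same argument with the symmetric relation or an immediate corollary, since $\VR{X,E}=\dVR{X,E}$ whenever $E$ is symmetric on $X$ (and then on $A$ too).
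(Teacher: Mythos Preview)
Your proof is correct and follows the same direct unwinding of definitions as the paper; in fact you are slightly more thorough, since the paper's proof only spells out the fullness step and leaves the subcomplex inclusion as implicit. The undirected case is handled identically in both.
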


\begin{proof}
    We prove the directed case. The undirected case will then follow. Let $\sigma\in \dVR{X}$ be such that $\sigma\subset A$, i.e., the vertices of $\sigma$ are in $A$. Let $\sigma=\{x_0,\dots ,x_n\}$ be an ordering of elements in $\sigma$ that gave us that $\sigma\in \dVR{X}$. Then $x_iEx_j$ in $X$, for $i<j$. As $A$ is an induced subgraph of $X$, we also have $x_iE_Ax_j$, for each $i<j$. Therefore, $\sigma\in \dVR{A}$. 
\end{proof}

Let $(X,E)$ be a digraph. For each $x\in X$, let $U_x=\{y\,|\, yEx\}.$ Since $E$ is reflexive, $x\in U_x$ so that $x\not \in X\setminus U_x$. Furthermore, every $y\in X$ such that $yEx$ satisfies $y\in U_x$, hence $y\not \in X\setminus U_x$.  It follows that $x\not \in c_E(X\setminus U_x)$, hence $x\in X\setminus c_E(X\setminus U_x)$.  Thus $U_x$ is a neighborhood of $x$, in fact it is the smallest neighborhood of $x$. It follows that $\{U_x\}_{x\in X}$ is an interior cover of $(X,c_E)$.

\begin{lemma}
    If $(X,E)$ is a digraph or a graph and $x\in X$, then $U_x$ (as an induced subgraph) is contractible. 
\end{lemma}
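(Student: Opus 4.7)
The plan is to deduce this immediately from \cref{lemma:only_nbhd_whole_space_then_contractible}, applied with the distinguished point taken to be $x$ itself inside the induced-subgraph closure space $U_x$. Concretely, I would show that $U_x$ admits no proper neighborhood of $x$, and then invoke the lemma to produce a contraction of $U_x$ to $\{x\}$.

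First, I would unpack the subspace closure. Let $c_E$ denote the Alexandroff closure on $X$ induced by $E$, so that $c_E(A)=\bigcup_{y\in A}\{z\colon yEz\}$. By \cref{def:subspace_closure}, the closure on $U_x$ induced as a subspace is $c_{U_x}(B)=U_x\cap c_E(B)$ for $B\subset U_x$. Using \cref{def:interior_cover}, a subset $V\subset U_x$ is a neighborhood of $x$ in $U_x$ iff $x\notin c_{U_x}(U_x\setminus V)$, equivalently iff there is no $y\in U_x\setminus V$ with $yEx$.

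The key observation is then almost tautological: by the very definition of $U_x=\{y\in X\colon yEx\}$, every element $y\in U_x$ already satisfies $yEx$. Consequently, as soon as $U_x\setminus V$ is nonempty, any witness $y\in U_x\setminus V$ forces $x\in c_E(U_x\setminus V)$, so $V$ fails to be a neighborhood of $x$. Hence the unique neighborhood of $x$ in $U_x$ is $U_x$ itself. Applying \cref{lemma:only_nbhd_whole_space_then_contractible} to $(U_x,c_{U_x})$ with distinguished point $x$ produces the desired contraction, which is the identity on $U_x$ for $t\in[0,1)$ and collapses everything to $x$ at $t=1$.

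There is essentially no obstacle here beyond a careful bookkeeping of the subspace closure and the interior/neighborhood formalism in \cref{def:subspace_closure,def:interior_cover}; the reflexivity of $E$ plus the defining property of $U_x$ make $x$ a kind of ``initial'' point whose neighborhood filter collapses to $\{U_x\}$, and \cref{lemma:only_nbhd_whole_space_then_contractible} takes care of everything else. The undirected (graph) case needs no separate treatment since graphs are being viewed as symmetric digraphs, and the proof never uses more than reflexivity of $E$ together with the definition of $U_x$.
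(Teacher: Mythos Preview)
Your proposal is correct and takes exactly the same approach as the paper: the paper's proof simply says ``The result follows from \cref{lemma:only_nbhd_whole_space_then_contractible} by setting $U_x=Y$ and $x=a$,'' and you have merely unpacked the verification that the only neighborhood of $x$ in $(U_x,c_{U_x})$ is $U_x$ itself.
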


\begin{proof}
    The result follows from \cref{lemma:only_nbhd_whole_space_then_contractible} by setting $U_x=Y$ and $x=a$.
\end{proof}

\subsection{Definition of \texorpdfstring{$f_X$}{f\_X}}
Here we define a map $f_X\colon |\dVR{X,E}|\to (X,E)$ between the geometric realization of the directed Vietoris-Rips complex and the its underlying digraph, that we will later show is a weak homotopy equivalence in $\cat{PsTop}$. Naturally, the map $f_X$ will depend on $E$ and not just $X$ but that will be clear from the definition, however there are choices involved with defining $f_X$ and it is not unique in any way. In order to keep the notation somewhat simple we have opted for $f_X$ and not $f_{(X,E)}$ as the symbol. The relation $E$ will always be clear from context. The same definition of $f_X$ will work for graphs as well. Thus, let $(X,E)$ be a given digraph.

Let $\sigma=\{v_0,\dots , v_n\}$ be an $n$-simplex in $\dVR{X,E}$. Here we are simply enumerating the vertices of $\sigma$ and this enumeration has nothing to do with 
an ordering that was used to determine that $\sigma\in \dVR{X,E}$.  The geometric realization of $\sigma$, $|\sigma|$ is the standard $n$-simplex $|\Delta^n|$ (which is guaranteed by \cref{prop:finite_cw_complexes_are_topological} to indeed be homeomorphic to $|\Delta^n|$). Thus the $v_i$'s can be viewed as a finite collection of points in Euclidean space in general position, yielding a Voronoi diagram subdivision of $\mathbb{R}^{n+1}$. Thus to each $v_i$ there is an assigned $V(v_i)$, its Voronoi cell, which is geometrically a convex polytope of points in $\mathbb{R}^n$ that are closest to $v_i$, or lie on an intersection of hyperplane bisectors among several of $v_i$'s. Let $\sigma_{i}=V(v_i)\cap |\sigma|$, that is $\sigma_i$ is the intersection of $V(v_i)$ with  the $n$-simplex spanned by the $v_i$'s. Since $|\sigma|$ is also convex, each $\sigma_i$ is convex. 

Let $\textnormal{bar}_{\sigma}$, called the barycenter of $|\sigma|$, be the point in $|\sigma|$ that is equidistant to all the $v_i$'s. If $\sigma$ is a $0$-simplex, say $\sigma=\{v\}$, the barycenter is the point $v$. For $\tau\subset \sigma$, let $\sigma_{\tau}=\bigcap_{i,v_i\in \tau}\sigma_i=\bigcap_{i,v_i\in \tau}\partial\sigma_i$; that is, $\sigma_{\tau}$ are the points in $|\sigma|$ that are equidistant from all the vertices in $\tau$. In particular, $\sigma_{\sigma}=\{\textnormal{bar}_{\sigma}\}$.

We define $f_X$ by induction on the $n$-skeleta of $|\dVR{X,E}|$. Recall that the $n$-\textbf{skeleton} of $|\dVR{X,E}|$ is the geometric realization of the abstract simplicial subcomplex of $\dVR{X,E}$ consisting of only $m$-simplices in $\dVR{X,E}$, $m\le n$. 

Suppose that $f_X$ is defined on the $(n-1)$-skeleton of $|\dVR{X,E}|$, $n\ge 1$. Let $\sigma=\{v_0,\dots, v_n\}$ be an $n$-simplex, where now we have assumed a specific ordering of vertices that was used to determine that $\sigma\in \dVR{X,E}$, i.e., we have $v_iEv_j$ for $i<j$ in $(X,E)$. Since $f_X$ is already defined on all $m$-simplices, $m\le n$, we only need to extend $f_X$ on the interior of $|\sigma|$, in $|\dVR{X,E}|$. We define $f_X(x)$, $x\in\textnormal{interior}(|\sigma|)$ by considering two cases:
\begin{enumerate}[left=0pt]
   \item If $x$ has a closest point in $|\sigma|$, say $v_i$, that is $x\in \textnormal{interior}(\sigma_i)$, we define $f_X(x)=v_i$.
    \item Let $\tau$ be the largest subset of $\sigma$ such that $x\in \sigma_{\tau}\cap\textnormal{interior}(|\sigma|)$. Set $f_X(x)=v_k$ where $k=\sup\{i\,|\, v_i\in \tau\}$. In particular, if $\tau=\sigma$ then $x=\textnormal{bar}_{\sigma}$ and $f_X(x)=v_n$.
\end{enumerate}

Geometrically, we are mapping the points closest to $v_i$ in $|\sigma|$ to $v_i$. When considering points on the boundaries of Voronoi cells, we map the points to the point in $\sigma$ with the largest index that is involved with the boundary. See \cref{fig:1} for a colored illustration, where we have chosen a total ordering of vertices of a digraph and a graph with $3$ elements that is consistent across all the skeleta of the resulting directed Vietoris-Rips complex. 

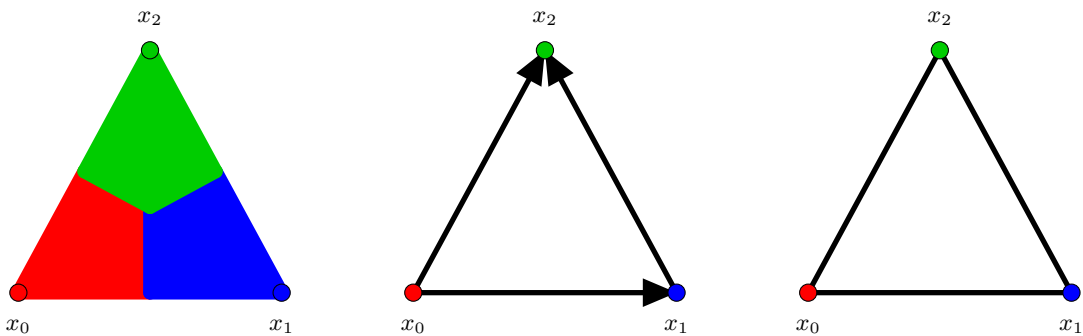
\begin{figure}[H]
\centering
\begin{tikzpicture}[line cap=round,line join=round,>=triangle 45,x=1cm,y=1cm,scale=0.5]
\clip(-8,-3) rectangle (22.0,7);
\fill[line width=0pt,color=ffqqqq,fill=ffqqqq,fill opacity=0.5] (-7,-1) -- (-5.25,2.225) -- (-3.5,1.275387596899225) -- (-3.5,-1) -- cycle;
\fill[line width=2pt,color=qqqqff,fill=qqqqff,fill opacity=0.5] (-3.5,-1) -- (0,-1) -- (-1.75,2.225) -- (-3.5,1.275387596899225) -- cycle;
\fill[line width=2pt,color=qqccqq,fill=qqccqq,fill opacity=0.5] (-5.25,2.225) -- (-3.5,1.275387596899225) -- (-1.75,2.225) -- (-3.5,5.45) -- cycle;
\draw [line width=2pt] (-5.25,2.225)-- (-3.5,1.275387596899225);
\draw [line width=2pt] (-3.5,1.275387596899225)-- (-3.5,-1);
\draw [line width=2pt] (-3.5,1.275387596899225)-- (-1.75,2.225);
\draw [line width=5.2pt,color=ffqqqq] (-7,-1)-- (-5.25,2.225);
\draw [line width=0.4pt,color=ffqqqq] (-3.5,1.275387596899225)-- (-3.5,-1);
\draw [line width=5.2pt,color=ffqqqq] (-3.5,-1)-- (-7,-1);
\draw [line width=5.2pt,color=qqqqff] (-3.5,-1)-- (0,-1);
\draw [line width=5.2pt,color=qqqqff] (0,-1)-- (-1.75,2.225);
\draw [line width=2pt,color=qqqqff] (-1.75,2.225)-- (-3.5,1.275387596899225);
\draw [line width=5.2pt,color=qqqqff] (-3.5,1.275387596899225)-- (-3.5,-1);
\draw [line width=5.2pt,color=qqccqq] (-5.25,2.225)-- (-3.5,1.275387596899225);
\draw [line width=5.2pt,color=qqccqq] (-3.5,1.275387596899225)-- (-1.75,2.225);
\draw [line width=5.2pt,color=qqccqq] (-1.75,2.225)-- (-3.5,5.45);
\draw [line width=5.2pt,color=qqccqq] (-3.5,5.45)-- (-5.25,2.225);
\draw [->,line width=2pt] (3.5,-1) -- (10.5,-1);
\draw [->,line width=2pt] (3.5,-1) -- (7,5.45);
\draw [->,line width=2pt] (10.5,-1) -- (7,5.45);
\draw [line width=2pt] (14,-1) -- (21,-1);
\draw [line width=2pt] (21,-1) -- (17.5,5.45);
\draw [line width=2pt] (17.5,5.45) -- (14,-1);
\begin{scriptsize}
\draw [fill=ffqqqq] (-7,-1) circle (6.5pt);
\draw [fill=qqccqq] (-3.5,5.45) circle (6.5pt);
\draw [fill=qqqqff] (0,-1) circle (6.5pt);
\draw [fill=ffqqqq] (3.5,-1) circle (6.5pt);
\draw [fill=qqqqff] (10.5,-1) circle (6.5pt);
\draw [fill=qqccqq] (7,5.45) circle (6.5pt);
\draw [fill=ffqqqq] (14,-1) circle (6.5pt);
\draw [fill=qqqqff] (21,-1) circle (6.5pt);
\draw [fill=qqccqq] (17.5,5.45) circle (6.5pt);
\draw [] (-7,-1.5) node[below]{$x_0$};
\draw [] (3.5,-1.5) node[below]{$x_0$};
\draw [] (14,-1.5) node[below]{$x_0$};
\draw [] (0,-1.5) node[below]{$x_1$};
\draw [] (10.5,-1.5) node[below]{$x_1$};
\draw [] (21,-1.5) node[below]{$x_1$};
\draw [] (-3.5,6.7) node[below]{$x_2$};
\draw [] (7,6.7) node[below]{$x_2$};
\draw [] (17.5,6.7) node[below]{$x_2$};
\end{scriptsize}
\end{tikzpicture}
\caption{Example of a map $f_X$ on a $|\Delta^2|=|\dVR{X,E}|=|\VR{X,F}|$ (left) produced from the digraph $(X,E)$ (middle) or the graph $(X,F)$ (right).}
\label{fig:1}
\end{figure}

However, because faces of simplices in $\dVR{X,E}$ could have been considered with different orderings as we move up the $n$-skeleta, the map $f_X$ could send points in lower dimensional faces of $\sigma_{\tau}$ that are not in the interior of $|\sigma|$ in $|\dVR{X,E}|$ to different choices of vertices in $(X,E)$. See \cref{example:continuity_of_f_X}.

\begin{example}
\label{example:continuity_of_f_X}
Let $(X,E)$ be the clique graph on $3$ vertices, $X=\{x,y,z\}$. Then $|\VR{X,E}|$ is a $|\Delta^2|$. Suppose that for the triangle $[x,y,z]$ we have picked the ordering $x=v_0$, $y=v_1$, $z=v_2$, that for the edge $[x,y]$ we picked the ordering $x=w_1$, $y=w_0$, that for the edge $[x,z]$ we picked the ordering $x=u_1$, $z=u_0$ and that for the edge $[y,z]$ we picked the ordering $y=q_1$, $z=q_0$. Then the definition of $f_X$ will be different from the one given in \cref{fig:1} for the same clique graph. See \cref{fig:2} for a colored illustration.
\end{example}

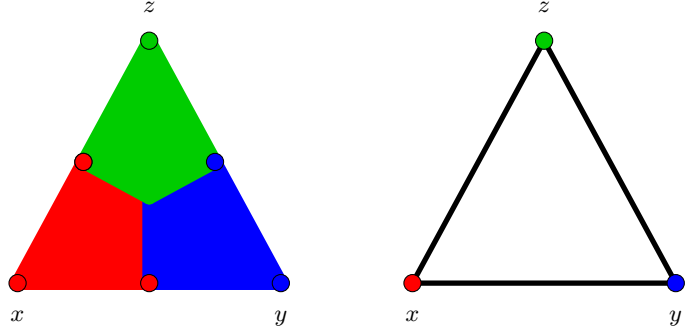
\begin{figure}[H]
\centering
\begin{tikzpicture}[line cap=round,line join=round,>=triangle 45,x=1cm,y=1cm,scale=0.5]
\clip(-8,-3) rectangle (12.0,7);
\fill[line width=0pt,color=ffqqqq,fill=ffqqqq,fill opacity=0.5] (-7,-1) -- (-5.25,2.225) -- (-3.5,1.275387596899225) -- (-3.5,-1) -- cycle;
\fill[line width=2pt,color=qqqqff,fill=qqqqff,fill opacity=0.5] (-3.5,-1) -- (0,-1) -- (-1.75,2.225) -- (-3.5,1.275387596899225) -- cycle;
\fill[line width=2pt,color=qqccqq,fill=qqccqq,fill opacity=0.5] (-5.25,2.225) -- (-3.5,1.275387596899225) -- (-1.75,2.225) -- (-3.5,5.45) -- cycle;
\draw [line width=2pt] (-5.25,2.225)-- (-3.5,1.275387596899225);
\draw [line width=2pt] (-3.5,1.275387596899225)-- (-3.5,-1);
\draw [line width=2pt] (-3.5,1.275387596899225)-- (-1.75,2.225);
\draw [line width=5.2pt,color=ffqqqq] (-7,-1)-- (-5.25,2.225);
\draw [line width=0.4pt,color=ffqqqq] (-3.5,1.275387596899225)-- (-3.5,-1);
\draw [line width=5.2pt,color=ffqqqq] (-3.5,-1)-- (-7,-1);
\draw [line width=5.2pt,color=qqqqff] (-3.5,-1)-- (0,-1);
\draw [line width=5.2pt,color=qqqqff] (0,-1)-- (-1.75,2.225);
\draw [line width=2pt,color=qqqqff] (-1.75,2.225)-- (-3.5,1.275387596899225);
\draw [line width=5.2pt,color=qqqqff] (-3.5,1.275387596899225)-- (-3.5,-1);
\draw [line width=5.2pt,color=qqccqq] (-5.25,2.225)-- (-3.5,1.275387596899225);
\draw [line width=5.2pt,color=qqccqq] (-3.5,1.275387596899225)-- (-1.75,2.225);
\draw [line width=5.2pt,color=qqccqq] (-1.75,2.225)-- (-3.5,5.45);
\draw [line width=5.2pt,color=qqccqq] (-3.5,5.45)-- (-5.25,2.225);
\draw [line width=2pt] (3.5,-1) -- (10.5,-1);
\draw [line width=2pt] (3.5,-1) -- (7,5.45);
\draw [line width=2pt] (10.5,-1) -- (7,5.45);
\draw [fill=ffqqqq] (-3.5,-1) circle (6.5pt);
\draw [fill=ffqqqq] (-5.25,2.225) circle (6.5pt);
\draw [fill=ffqqqq] (-5.25,2.225) circle (6.5pt);
\draw [fill=qqqqff] (-1.75,2.225) circle (6.5pt);
\begin{scriptsize}
\draw [fill=ffqqqq] (-7,-1) circle (6.5pt);
\draw [fill=qqccqq] (-3.5,5.45) circle (6.5pt);
\draw [fill=qqqqff] (0,-1) circle (6.5pt);
\draw [fill=ffqqqq] (3.5,-1) circle (6.5pt);
\draw [fill=qqqqff] (10.5,-1) circle (6.5pt);
\draw [fill=qqccqq] (7,5.45) circle (6.5pt);
\draw [] (-7,-1.5) node[below]{$x$};
\draw [] (3.5,-1.5) node[below]{$x$};
\draw [] (0,-1.5) node[below]{$y$};
\draw [] (10.5,-1.5) node[below]{$y$};
\draw [] (-3.5,6.7) node[below]{$z$};
\draw [] (7,6.7) node[below]{$z$};
\end{scriptsize}
\end{tikzpicture}
\caption{Example of a map $f_X$ on a $|\Delta^2|=|\VR{X,E}|$ (left) produced from the graph $(X,E)$ (right).}
\label{fig:2}
\end{figure}

We claim that $f_X$ is continuous, irregardless of the different choices in defining $f_X$ due to potentially different orderings of vertices considered in different skeleta. Furthermore, as $|\dVR{X,E}|$ is a topological space and we view $(X,E)$ as a closure space, $f_X$ is a morphism in $\cat{Cl}$.

\begin{proposition}
    The function $f_X\colon |\dVR{X,E}|\to (X,E)$ is a continuous map in $\cat{Cl}$.
\end{proposition}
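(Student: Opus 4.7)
The plan is to verify continuity of $f_X$ pointwise via \cref{theorem:continuity_in_terms_of_nbhds}. Since $(X,E)$ is a closure space in which each vertex $v$ has smallest neighborhood $U_v = \{w \in X \,:\, wEv\}$, it will suffice to show that for every $p \in |\dVR{X,E}|$, the preimage $f_X^{-1}(U_{f_X(p)})$ contains an open neighborhood of $p$ in $|\dVR{X,E}|$. Note that $|\dVR{X,E}|$ is a topological space, because it is a finite CW complex, see \cref{prop:finite_cw_complexes_are_topological}.

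First I would let $\mu$ be the unique simplex of $\dVR{X,E}$ with $p \in |\mu|^\circ$, and let $u_0,\dots,u_m$ be the ordering used to define $f_X$ on $|\mu|^\circ$. Writing $\tau_\mu\subset\mu$ for the set of vertices of $\mu$ that are nearest to $p$, the construction of $f_X$ yields $f_X(p) = u_l$ with $l = \sup\{i \,:\, u_i \in \tau_\mu\}$. My candidate neighborhood $W$ will be the intersection of a small metric ball around $p$ with the open star $\textnormal{St}(\mu) = \bigcup_{\sigma \supset \mu} |\sigma|^\circ$, which is open in $|\dVR{X,E}|$ by standard simplicial-complex facts.

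The key geometric step, which I expect to be the crux, is the claim that if $q \in W$ lies in $|\mu''|^\circ$ for some $\mu \subset \mu''$, then the set $\tau' \subset \mu''$ of vertices of $\mu''$ nearest to $q$ satisfies $\tau' \subset \tau_\mu$. I would establish this by a direct computation inside the standard simplex: writing $p = \sum_{u \in \mu} t_u u$ with all $t_u > 0$ (since $p \in |\mu|^\circ$), one checks $\|p - v\|^2 = \sum_{u} t_u^2 - 2t_v + 1$ for $v \in \mu$, while $\|p - v\|^2 = \sum_u t_u^2 + 1$ for $v \in \mu'' \setminus \mu$. Hence $p$ is strictly closer to every vertex of $\mu$ than to any vertex of $\mu'' \setminus \mu$, and the set of vertices of $\mu''$ at minimum distance to $p$ is exactly $\tau_\mu$. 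Continuity of Euclidean distance then forces $\tau' \subset \tau_\mu$ once $W$ is chosen small enough.

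It will then remain to verify $f_X(q) \in U_{u_l}$, which is combinatorial. By the inductive definition of $f_X$ on $|\mu''|^\circ$, $f_X(q) = v_j$ for some $v_j \in \tau'$, and since $\tau' \subset \tau_\mu \subset \mu$, both $v_j$ and $u_l$ lie in $\mu$. If $v_j = u_l$ then $f_X(q) \in U_{u_l}$ because $\Delta_X \subset E$; otherwise $v_j$ precedes $u_l$ in the $\mu$-ordering (since $u_l$ is maximal in $\tau_\mu$ with respect to that ordering), and so $v_j E u_l$ by the defining property of $\mu \in \dVR{X,E}$. In either case $f_X(q) \in U_{u_l} = U_{f_X(p)}$, so $f_X(W) \subset U_{f_X(p)}$, which completes the continuity argument. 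The main subtlety is that the potentially different vertex orderings chosen for $\mu$ and for $\mu''$ could threaten compatibility of $f_X$ across the face $\mu \subset \mu''$, but the inclusion $\tau' \subset \tau_\mu$ reduces the comparison to the single $\mu$-ordering, which is exactly what makes the argument go through.
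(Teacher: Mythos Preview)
Your argument is correct and rests on the same core observation as the paper: near a point $p$ with nearest-vertex set $\tau_\mu$, the image of $f_X$ lands in $\tau_\mu$, and every element of $\tau_\mu$ is $E$-related to the $\mu$-maximal vertex $u_l = f_X(p)$, hence lies in $U_{u_l}$.

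The organization, however, differs in a way worth noting. The paper proceeds by induction on skeleta, restricts $f_X$ to each closed $n$-simplex $|\sigma|$, and invokes the pasting lemma (\cref{prop:pasting_lemma}); this forces a three-case split according to whether $x$ lies in the interior of a Voronoi cell, on a Voronoi wall in the interior of $|\sigma|$, or on a proper face $|\sigma'|\subsetneq|\sigma|$ (where the possibly different ordering on $\sigma'$ must be reconciled with that on $\sigma$). Your approach works globally in $|\dVR{X,E}|$ via the open star $\mathrm{St}(\mu)$, so the face-compatibility issue appears in the dual form $\mu\subset\mu''$ rather than $\sigma'\subset\sigma$, and all three of the paper's cases collapse into the single inclusion $\tau'\subset\tau_\mu$. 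This is a genuine streamlining: the skeleta induction and the pasting lemma are avoided entirely. The price is that you must argue the open star is open and that a single radius works for all cofaces $\mu''\supset\mu$; both are immediate since the complex is finite, but you should say so explicitly (e.g., embed $|\dVR{X,E}|$ in $\mathbb{R}^{|X|}$ via standard basis vectors so that one Euclidean metric governs all simplices, or take the minimum of the finitely many radii $r_{\mu''}$).
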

    
\begin{proof}
    We prove by induction on $n$ that $f_X$ is continuous on the $n$-skeleton of $|\dVR{X,E}|$.
    Suppose $n=0$. The $0$-skeleton of $|\dVR{X,E}|$ are the vertices of $X$ with the discrete topology and any set map out of that space is continuous in $\cat{Cl}$. In particular, $f_X$ is continuous on the $0$-skeleton.

    Now suppose that the map $f_X$ is continuous on the $(n-1)$-skeleton, $n\ge 1$. It is sufficient to show that the restrictions of $f_X$ to all $|\sigma|$, for $\sigma$ an $n$-simplex in $\dVR{X,E}$ are continuous, since then $f_X$ will be continuous on the $n$-skeleton by the pasting lemma (\cref{prop:pasting_lemma}).
    Let $\sigma=\{v_0,\dots, v_n\}\in |\dVR{X,E}|$, where $v_iEv_j$, for $i<j$. 
    We show that $f_X$ is continuous at all $x\in |\sigma|$, relying on \cref{theorem:continuity_in_terms_of_nbhds}. For $v\in (X,E)$, its minimal neighborhood, when considering $(X,E)$ as a closure space, is the set $U_v=\{v'\in X\,|\, v'Ev\}$. There are several cases to consider. In Case 1, $x$ has a unique vertex $v_i$ in $\sigma$ that is closest to it and in that case $x$ belongs to $\sigma_i$ and is not on the boundary of any $\sigma_j$'s. The other possibility is that $x$ could be equidistant from multiple vertices in $|\sigma|$. This possibility gives two additional cases. In Case 2, $x$ is in the interior of $|\sigma|$ in $|\dVR{X,E}|$. In Case 3, $x$ belongs to a lower dimensional face of $|\sigma|$. 
    \begin{enumerate}[left=0pt]
        \item Suppose we are in Case 1. Then $x$ is in the interior of $\sigma_i$ in $|\sigma|$ and $f_X(x)=v_i$. Since the interior of $\sigma_i$ in $\sigma$ is open in $|\sigma|$, we can find a small enough open neighborhood of $x$, $W$, such that $W\subset \textnormal{interior}(\sigma_i)$. Thus $f(W)=\{v_i\}\subset U_{v_i}$ and hence $f_X$ is continuous at $x$.
        \item Suppose we are in Case 2. Let $\tau$ be the largest subset of $\sigma$ such that $x\in \sigma_{\tau}$ and $x$ is also in the interior of $|\sigma|$ in $\dVR{X,E}$. Let $v\in \tau\subset \sigma$ be the vertex in the ordering we have chosen in $\sigma$ with the highest index. Thus $f_X(x)=v$. By definition, $\sigma_{\tau}=\bigcap_{i,v_i\in \tau} \partial\sigma_i$. Hence, every neighborhood of $x$ in $\sigma$ will have an intersection with all $\sigma_i$ (and their interiors), for $i$ such that $v_i\in \tau$. Let $W$ be a small enough open neighborhood of $x$, such that $W$ only intersects the $\sigma_i$'s for $v_i\in \tau$, and no other Voronoi cells. Note that if $\tau=\sigma$, then $x=\textnormal{bar}_{\sigma}$ and $W$ will intersect all the Voronoi cells in $\sigma$. Thus, $f_X(W)=\{v_i\,|\,v_i\in \tau\}$. Furthermore, for $i$ such that $v_i\in \tau$, we have $v_iEv$ by the choice of $v$. Thus $\{v_i\in \tau\}\subset U_v$ and therefore $f_X(W)\subset U_v$. It thus follows that $f_X$ is continuous at $x$.
        \item Suppose we are in Case 3. Let $\tau$ be the largest subset of $\sigma$ such that $x\in \sigma_{\tau}$ and $x$ belongs to a lower dimensional face of $|\sigma|$.  Let $k=\sup \{i\,|\,v_i\in \tau\}$ 
        be the highest index of vertices from $\sigma$ that are in $\tau$. Then $v_iEv_k$ for all $0\le i\le n$.
        Assume that $\sigma'\subset \sigma$ is the smallest face of $\sigma$ such that $x\in |\sigma'|$. Note that $\sigma'$ cannot be a single point, as otherwise $x$ would not be equidistant from multiple points in $\sigma$. 
        Since $x\in \sigma_{\tau}$, we have that $\tau\subset \sigma'$. The maximality of $\tau$ in $\sigma$ implies that $\tau$ is also the largest subset of $\sigma'$ such that $x\in \sigma'_{\tau}$. When constructing $\dVR{X,E}$ up to the $(n-1)$-skeleton, the ordering used to determine $\sigma'\in \dVR{X,E}$ and the definition of $f_X$ on $|\sigma'|$ could have been different from the one we have in $\sigma$. Suppose $|\sigma'|=m+1<n$ and let $\sigma'=\{w_0,\dots, w_m\}$ be an ordering of vertices in $\sigma'$ that was used to determine that $\sigma'\in \dVR{X,E}$ and define $f_X$ on $|\sigma'|$. In other words, we have $w_iEw_j$ for all $i<j$. Let $l=\sup\{i\,|\, w_i\in \tau\}$ be the highest index of vertices from $\sigma'$ that are in $\tau$. Then by definition, $f_X(x)=w_l$.
        Since $\sigma_{\tau}=\bigcap_{i,v_i\in \tau}\sigma_i$, any neighborhood of $x$ intersects $\sigma_i$ (and its interior), for all $i$ such that $v_i\in \tau$. Let $W$ be a small enough neighborhood of $x$ in $|\sigma|$ such that $W$ only intersects the $\sigma_i$'s for $v_i\in \tau$ and no other Voronoi cells. From the ordering and labeling of vertices in $\sigma$ and $\sigma'$ and the definition of $\tau$, we have $v_iEw_l$ for all $v_i\in \tau$. Therefore, $f_X(W)=\{v_i\,|\,v_i\in \tau\}\subset U_{w_l}=\{v'\in X\,|\, v'Ew_l\}$. Therefore $f_X$ is continuous at $x$. \qedhere
    \end{enumerate}
\end{proof}

When $(X,E)$ is a graph, we can impose a total order on the vertices of $X$, and enumerate them as $\{x_0,\dots x_N\}$. The following is an immediate consequence of the definition of $f_X$ and the fact we can make the choices of $f_X$ on different skeleta we were making inductively consistent with the chosen total order as every edge in $(X,E)$ is bidirected.

\begin{corollary}
    Let $\sigma=\{v_0,\dots, v_n\}$ be an $n$-simplex in $\VR{X,E}$, where the order is gotten by restricting the total order on $X$.  Then $f_X$ restricted to $|\sigma|$ has can be defined as
    \[f_X(x)=\begin{cases}
        v_i, & x\textnormal{ is in the interior of } \sigma_i\\
        v_j, & j=\sup\{k\,|\, x\in \partial\sigma_k\}
    \end{cases},\]
    and this definition makes $f_X$ a continuous map of closure spaces. 
\end{corollary}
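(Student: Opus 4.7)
The plan is to show that, in the graph case, the arbitrary per-simplex choices made inductively in the proof of the preceding proposition can be made globally consistent, and that this consistency collapses the three cases into the two displayed in the statement.

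First, I would observe that since $E$ is symmetric (equivalently, bidirected as a spatial digraph), for any simplex $\sigma \in \VR{X,E}$ every ordering of its vertices witnesses that $\sigma$ lies in the directed Vietoris--Rips complex: whenever $v_i, v_j \in \sigma$, we have both $v_i E v_j$ and $v_j E v_i$. Hence, fixing a total order on $X$ once and for all and restricting it to each simplex produces, for every $\sigma \in \VR{X,E}$, a valid ordering in the sense required by the construction of $f_X$. Crucially, these orderings are mutually compatible: if $\sigma' \subset \sigma$, then the chosen ordering on $\sigma'$ is exactly the restriction of the chosen ordering on $\sigma$. This was precisely the incompatibility that forced the separate treatment of Case 3 (lower-dimensional faces with their own, possibly different, orderings) in the previous proof.

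Next, I would check that with these consistent orderings the inductive definition of $f_X$ on $|\sigma|$ simplifies to the displayed formula. For a point $x$ lying in the interior of some $\sigma_i$ we immediately recover $f_X(x) = v_i$ from Case~1. For any other $x \in |\sigma|$, let $\tau$ be the largest subset of $\sigma$ with $x \in \sigma_\tau$. Since $\sigma_\tau = \bigcap_{v_i \in \tau} \partial \sigma_i$, we have the identification
\[
\{k : x \in \partial \sigma_k\} \;=\; \{k : v_k \in \tau\},
\]
so the index $j = \sup\{k : x \in \partial \sigma_k\}$ agrees with the highest index of a vertex of $\tau$. If $x$ lies in the interior of $|\sigma|$, this is Case~2 verbatim. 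If $x$ lies on a proper face $|\sigma'|$ with $|\sigma'|$ minimal, then $\tau \subset \sigma'$ and, because the ordering on $\sigma'$ is the restriction of the ordering on $\sigma$, the quantity $l = \sup\{i : w_i \in \tau\}$ from Case~3 coincides with $j$; thus the value prescribed by the inductive construction on $|\sigma'|$ matches the displayed formula.

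Finally, continuity is not a new claim: $(X,E)$ (viewed as a closure space via $\overline{E} = E \sqcup \Delta_X$) is in particular a digraph, and the preceding proposition already established that $f_X \colon |\dVR{X,E}| \to (X,E)$ is continuous in $\cat{Cl}$ under \emph{any} admissible choice of per-simplex orderings; the consistent choice above is one such. I expect no real obstacle here; the only subtle point is verifying the collapse of Case~3, which reduces to the observation that taking suprema of indices commutes with restricting a total order to a subset.
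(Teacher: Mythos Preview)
Your proposal is correct and matches the paper's approach exactly: the paper states this corollary as an immediate consequence of the preceding proposition together with the observation that, since every edge is bidirected, the inductive per-simplex orderings can all be taken to agree with a single global total order on $X$. Your write-up simply spells out in detail what the paper leaves implicit, including the key point that compatibility of orderings across faces collapses Case~3 into Case~2.
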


\subsection{\texorpdfstring{$f_X$}{f\_X} is a weak homotopy equivalence}

Here we show that the map $f_X\colon |\dVR{X,E}|\to (X,E)$ is a weak homotopy equivalence in $\cat{PsTop}$. For $A\subset X$, let $(A,E)$ be the induced subgraph of $(X,E)$ on $A$ (We used the notation $(A,E_A)$ for induced subgraphs before, but from now on for simplicity we write $(A,E)$).

\begin{lemma}
\label{lemma:open_start_def_ret}
    Let $(X,E)$ be a digraph and let $A\subset X$. Then $f^{-1}_X(A)$ deformation retracts to  $|\dVR{A,E}|$ in $|\dVR{X,E}|$.
\end{lemma}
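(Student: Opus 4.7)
The plan is to build an explicit strong deformation retraction $H\colon f_X^{-1}(A)\times I \to f_X^{-1}(A)$ by straight-line interpolation toward a barycentric projection onto $|\dVR{A,E}|$. First I would verify $|\dVR{A,E}| \subseteq f_X^{-1}(A)$: if $x$ lies in the open interior of a simplex $\tau$ of $\dVR{A,E}$, then all vertices of $\tau$ belong to $A$, and $f_X(x)$ is one such vertex. Here I would use that $\dVR{A,E}$ is a full subcomplex of $\dVR{X,E}$ by \cref{lemma:subgraph_full_subcomplex}.

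Next, for $x\in|\dVR{X,E}|$ with barycentric coordinates $(t_v(x))_v$, set $T_A(x)=\sum_{v\in A}t_v(x)$ and, whenever $T_A(x)>0$, define $r(x)=T_A(x)^{-1}\sum_{v\in A}t_v(x)\cdot v$. If $x\in f_X^{-1}(A)$ and $\tau$ is the smallest simplex containing $x$, then $f_X(x)\in \tau\cap A$ gives $T_A(x)\ge t_{f_X(x)}(x)>0$, so $r$ is defined on the open set $\{T_A>0\}\supseteq f_X^{-1}(A)$, is continuous there (each $t_v$ is a continuous function on $|\dVR{X,E}|$ by pasting closed simplices), and takes values in $|\dVR{A,E}|$. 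The straight-line homotopy $H(x,t)=(1-t)x + t\cdot r(x)$, formed within each closed simplex containing $x$, is then continuous on $\{T_A>0\}\times I$. Since $T_A(x)=1$ whenever $x\in |\dVR{A,E}|$, $r(x)=x$ there, so $H$ fixes $|\dVR{A,E}|$ pointwise in $t$. Thus $H$ will be the desired strong deformation retraction, provided it takes values in $f_X^{-1}(A)$.

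The main obstacle is verifying that $f_X(H(x,t))\in A$ for every $t\in[0,1]$. The key geometric observation is that in the standard simplex $|\Delta^n|$ with vertices $e_0,\dots,e_n\in\mathbb{R}^{n+1}$, a direct computation gives $|y - e_k|^2 = 1 + \sum_i c_i^2 - 2c_k$ for $y=\sum c_i e_i$, so the vertex of $\sigma$ closest to $y$ is the one with the largest barycentric coordinate. Consequently, for $y$ in the open interior of a simplex $\sigma$, $f_X(y)$ picks out the vertex of $\sigma$ achieving this maximum, with ties broken by the maximum-index rule from the ordering chosen for $\sigma$ in $\dVR{X,E}$. Writing the barycentric coefficients of $H(x,t)$ (with $\tau$ the smallest simplex containing $x$) as $\lambda_v(t) = t_v((1-t)+t/T_A)$ for $v\in A\cap \tau$ and $\mu_v(t) = (1-t)t_v$ for $v\in \tau\setminus A$, the $\lambda_v$'s scale by the common factor $(1-t)+t/T_A \ge 1$, preserving their relative order and the max-index tiebreaker among them, while each $\mu_v$ strictly decreases in $t$. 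Setting $a_*=f_X(x)\in A$, for every $v\in \tau\setminus A$ (which forces $T_A<1$) and every $t\in(0,1)$ one has $\mu_v(t) < t_v \le t_{a_*} < \lambda_{a_*}(t)$, so the argmax remains $a_*\in A$ throughout $t\in[0,1)$. At $t=1$, $H(x,1)=r(x)\in |A\cap\tau|\subseteq|\dVR{A,E}|\subseteq f_X^{-1}(A)$, completing the construction.
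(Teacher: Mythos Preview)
Your argument is correct and is essentially the paper's own proof rewritten in barycentric coordinates: the map $r$ you define is exactly the join projection $a\mapsto s_\sigma(a)$ the paper uses (projecting along the join lines of $|\alpha|*|\beta|$ with $\alpha=\tau\cap A$, $\beta=\tau\setminus A$), and both proofs take the straight-line homotopy to it. Your verification that the homotopy stays inside $f_X^{-1}(A)$ via the inequalities $\mu_v(t)<t_v\le t_{a_*}<\lambda_{a_*}(t)$ is in fact a bit more explicit than the paper's corresponding sentence, which appeals to the geometry of the Voronoi cells to assert the same conclusion.
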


\begin{proof} 
By \cref{lemma:subgraph_full_subcomplex}, $\dVR{A,E}$ is a full subcomplex of $\dVR{X,E}$. Let $C=X\setminus A$ be the complement of $A$, and consider the induced subgraph $(C,E)$. Then $\dVR{C,E}$ is the set of simplices in $\dVR{X,E}$ whose geometric realization is disjoint from $|\dVR{A,E}|$. By \cref{lemma:subgraph_full_subcomplex}, $\dVR{C,E}$ is also a full subcomplex of $\dVR{X,E}$. Therefore for $\sigma\in \dVR{X,E}$, if the vertices of $\sigma$ are in $A$, then $\sigma\in \dVR{A,E}$. If the vertices of $\sigma$ are in $C=X\setminus A$, then $\sigma\in \dVR{C,E}$. Otherwise if $\sigma$ has vertices both in $A$ and $C$, then the geometric realization of $\sigma$, is a join $|\sigma|=|\alpha|*|\beta|$ of geometric realizations of a simplex $\alpha\in \dVR{A,E}$ and a simplex $\beta\in \dVR{C,E}$.

We define a retract $r\colon f^{-1}_X(A)\to |\dVR{A,E}|$. Let $a\in f^{-1}_X(A)$. Then $a$ is in the geometric realization of some simplex $\sigma\in \dVR{X,E}$ with the property that $|\sigma|\cap |\dVR{A,E}|\neq \varnothing$.  If $|\sigma|\subset|\dVR{A,E}|$, define $r(a)=a$. Otherwise, the vertices in $\sigma$ lie both in $A$ and $C$. Write $\sigma=\alpha\sqcup \beta$ where the vertices of $\alpha$ are in $\dVR{A,E}$ and the vertices of $\beta$ are in $\dVR{C,E}$. As noted above, $|\sigma|$ is a join of $|\alpha|$ and $|\beta|$.  Since we are in the case where $a\not \in |\alpha|$, $a$ lies on a  unique line segment joining a point of $|\alpha|$ to a point of $|\beta|$. Denote the end point of this line segment that lies in $|\dVR{A,E}|$ by $s_{\sigma}(a)$. By construction of $f_X$, note that $f_X^{-1}(A)\cap |\sigma| =\left(\bigcup_{i\in \alpha}\textnormal{interior}(\sigma_i)\right) \cup N$, where $N=\{x\in |\sigma|\,|\, \exists i\in \alpha, x\in \partial\sigma_i, f_X(x)\in A\}$
and $\sigma_i$ are the Voronoi cells of $|\sigma|$ associated to vertices in $\alpha$. The set $N$ are the points of the boundaries of the Voronoi cells corresponding to vertices from $\alpha$ that are in $f^{-1}_X(A)$. Because $a\in f_X^{-1}(A)\cap |\sigma|$, any point on the line segment between $a$ and $s_{\sigma}(a)$ also lies in $f^{-1}_X(A)\cap |\sigma|$ as we are moving closer and closer to $|\alpha|$, and $\sigma_i$ being the Voronoi cells guarantee that. Define $r(a)=s_{\sigma}(a)$ and observe that $r$ is the desired retract. Then the straight line homotopy is a homotopy between identity on $f^{-1}(A)$ and $r$ and thus the result follows.
\end{proof}

\begin{theorem}
\label{theorem:vr_weak_homotopy_equivalence}
    For each finite digraph $(X,E)$ there exist a weak homotopy equivalence $f_X\colon |\dVR{X,E}|\to (X,E)$ in $\mathbf{PsTop}$ ($f_X$ is a morphism in $\cat{Cl}$, but our homotopy theory is defined in $\cat{PsTop}$).
\end{theorem}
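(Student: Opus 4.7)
The plan is to proceed by induction on $|X|$. The base case $|X|=1$ is immediate, since $|\dVR{X,E}|$ and $X$ are then both singletons and $f_X$ is a homeomorphism. For the inductive step I would first dispatch a degenerate situation: if some $x \in X$ has $U_x=X$, i.e., every vertex has an edge into $x$, then by \cref{lemma:only_nbhd_whole_space_then_contractible} $X$ is contractible as a closure space (its only neighborhood of $x$ is $X$ itself), while \cref{lemma:complex_of_nbhd_is_a_cone} shows $|\dVR{X,E}|$ is a cone with apex $x$, hence contractible. Any continuous map between contractible spaces induces trivial maps on all homotopy groups and is therefore a weak equivalence.

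In the generic case, every $U_x$ is a proper subset of $X$. The collection $\{U_x\}_{x \in X}$ is a finite interior cover of $(X,E)$, and every element of its closure $\mathcal{V}$ under nonempty finite intersections is also a proper subset of $X$, since none of the $U_x$ equals $X$. The plan is then to apply \cref{corollary:weak_hom_eq_on_inter_cover}, reducing the theorem to showing that $f_X\colon f_X^{-1}(V)\to V$ is a weak equivalence for every $V \in \mathcal{V}$.

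To handle a single such $V$, I would invoke \cref{lemma:open_start_def_ret} to deformation retract $f_X^{-1}(V)$ onto the subcomplex $|\dVR{V,E}|$, so the inclusion $i\colon |\dVR{V,E}| \hookrightarrow f_X^{-1}(V)$ is a homotopy equivalence. The key observation is that the restriction of $f_X$ to $|\dVR{V,E}|$ lands in $V$ (by construction, $f_X$ maps each simplex into its own vertex set) and is itself a legitimate instance of the construction run on the induced subgraph $(V,E)$: the vertex orderings already chosen on simplices of $\dVR{V,E} \subset \dVR{X,E}$ remain valid in $(V,E)$ because $V$ is an induced subgraph, so the edge relations are unchanged. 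Since $|V|<|X|$, the inductive hypothesis applies to $f_X|_{|\dVR{V,E}|}\colon |\dVR{V,E}|\to V$, giving a weak equivalence. From $f_X|_{f_X^{-1}(V)} \circ i = f_X|_{|\dVR{V,E}|}$ together with the two-out-of-three property for weak equivalences (immediate from functoriality of $\pi_n$), the desired conclusion follows.

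The main obstacle I anticipate is the bookkeeping around the inductive hypothesis. Because $f_X$ depends on arbitrary choices of vertex orderings simplex by simplex, the inductive claim must be formulated as ``for every finite digraph and every valid system of orderings, the resulting $f_X$ is a weak equivalence.'' With that strengthened formulation, the restriction of $f_X$ to a full subcomplex $|\dVR{V,E}|$ automatically qualifies as a valid $f_V$ on which to invoke the inductive hypothesis. A secondary point to check is that empty intersections are simply discarded from $\mathcal{V}$, and that the whole space $X$ does not reappear as a member of $\mathcal{V}$ in the generic case (which is guaranteed precisely because every $U_x \subsetneq X$).
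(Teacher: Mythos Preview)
Your proposal is correct and follows essentially the same route as the paper's proof: induction on $|X|$, splitting into the degenerate case $U_x=X$ (both sides contractible) and the generic case handled via \cref{corollary:weak_hom_eq_on_inter_cover} applied to the cover $\{U_x\}$, together with \cref{lemma:open_start_def_ret} and the inductive hypothesis on each intersection. Your treatment is in fact slightly more careful than the paper's, since you make explicit that the inductive statement must be quantified over all admissible orderings so that the restriction $f_X|_{|\dVR{V,E}|}$ qualifies as a valid $f_V$, and you spell out the two-out-of-three step that the paper leaves implicit.
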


\begin{proof}
    We proceed by induction on the cardinality of $X$, $n=|X|$. If $n=1$ the claim is immediate. Suppose the claim is true for any digraph with cardinality $m<n$. Let $(X,E)$ be a digraph with $|X|=n$. Consider the interior cover of $(X,c_E)$, $\mathcal{U}=\{U_x\}_{x\in X}$, where $U_x=\{y\in X\,|\, yEx\}$. If $\exists x\in X$ such that the cardinality of $U_x$ is $n$, $|U_x|=n$, then $U_x=X$. By \cref{lemma:only_nbhd_whole_space_then_contractible} it follows that $(X,c_E)$ is contractible. Furthermore, as $|\dVR{U_x,E}|$ is always a cone by \cref{lemma:complex_of_nbhd_is_a_cone}, and hence contractible, the claim follows. Now suppose that $\forall x\in X$, $|U_x|<n$. Let $\mathcal{U'}$ be the closure of the collection $\mathcal{U}$ under finite intersections, $\mathcal{U'}=\{A\subset X\,|\, \exists U_1,\dots, U_k\in \mathcal{U}, A=\bigcap_{i=1}^k U_i\}$. For each $A\in \mathcal{U}'$, $f_X^{-1}(A)$ deformation retracts to $|\dVR{A,E}|$ in $|\dVR{X,E}|$ by \cref{lemma:open_start_def_ret}. Furthermore, by the induction hypothesis we have that $f_A\colon |\dVR{A,E}|\to (A,E)$ is a weak homotopy equivalence. Thus, by \cref{corollary:weak_hom_eq_on_inter_cover} the result follows.
\end{proof}

\section{Discussion}
\label{section:discussion}
We discuss the implications of this manuscript with several examples from applied topology.

\begin{example}
In \cite{milicevic2023singular}, the author develops a Mayer–Vietoris long exact sequences and other tools to compute singular homology for closure spaces.  In particular, the homology of the closure space of roots of unity $(\mathbb{Z}_n,c_m)$, where $c_m$ is the m-nearest neighbors closure on the integers modulo $n$, is studied.  One question that was left open was to compute the homology  of $(\mathbb{Z}_6,c_2)$.  Using  \cref{theorem:vr_weak_homotopy_equivalence,theorem:weak_homotopy_iso_on_homology}, we  see that $$H_i(\mathbb{Z}_6, c_2)\cong H_i(|\mathrm{VR}(\mathbb{Z}_6, c_2)|)\cong \begin{cases}
			\mathbb{Z}, & \text{if $i=$ 0, 2}\\
            0, & \text{otherwise}
		 \end{cases},$$   
since one computes that $|\VR{\mathbb{Z}_6,c_2}|$ is the regular octahedron. More generally we can now compute the singular homology of the closure space $(\mathbb{Z}_n,c_m)$ for any values of $n$ and $m$ by considering the associated Vietoris-Rips complex. 
\end{example}

\begin{example} Digital homotopy theory studies topological properties of digital images \cite{LuptonOpreaScoville2022Homotopy}.  A digital image $X$ is a finite subset of $\mathbb{Z}^n$ along with an adjacency relation where two point $\vec{x},\vec{y}$ are adjacent if $|x_i-y_i|\leq 1$ for every $1\leq i \leq n$. A digital image may thus be thought of as a graph whose vertices are points in $\mathbb{Z}^n$ with edge relations constrained by the lattice. Both a digital $\pi_1$ \cite{LuptonOpreaScoville2021Fundamental} and digital $\pi_2$ \cite{lupton2023secondhomotopygroupdigital} have been studied, and it has been shown that $\pi_1(C_n)\cong \pi_2 (S^2)\cong \mathbb{Z}$ where $C_n$ is any cycle of length $\geq 4$ and $S^2$ is the digital image given by $\{\pm \mathbf{e_1}, \pm \mathbf{e_2}, \pm \mathbf{e_3}\}$ where $\mathbf{e_i}$ is the $i^{th}$ standard basis vector in $\mathbb{Z}^3$. More generally, the authors in \cite{LuptonScoville2022Digital} showed that the digital fundamental group of a digital image is isomorphic to the edge group of its clique complex.  This latter group is well-known to be isomorphic to the fundamental group of the geometric realization of the clique complex. Hence the digital fundamental group of a digital image $X$ (defined purely in terms of digital homotopy classes of digital loops) turns out to be computed by simply computing the classical fundamental group of the geometric realization of $X$. 

If we instead view a digital image as a closure space, we may associate an alternative version of homotopy groups as well as homology groups to digital images, namely, the classical homotopy and homology groups of the closure space.  \cref{theorem:vr_weak_homotopy_equivalence} may be used to compute these alternative homotopy and homology groups by constructing the Vietoris-Rips complex of the graph. The result mentioned in the above paragraph then implies that the alternate fundamental group of $X$ coincides with its digital fundamental group.  Whether or not the higher digital homotopy groups coincide with the higher homotopy groups induced from $X$ as a closure space is unknown. In particular, the digital image $S^2$ described above as a closure space is homeomorphic to $(\mathbb{Z}_6,c_2)$ whose Vietoris-Rips complex is the regular octahedron. Thus, in this case we also have an isomorphism of digital homotopy groups and the classical homotopy groups in degree 2 as well. We conjecture that we have isomorphisms of all higher homotopy groups, for all digital images.  
\end{example}

\begin{example}
    The Vietoris-Rips and the \v{C}ech complex are two commmonly used constructions in topological data analysis \cite{MR3839171,Rabadan:2019}. More specifically if $(X,d)$ is a metric space and $P\subset X$ is a finite point sample, the (ambient) \v{C}ech complex at scale $r\ge 0$ of $P$ is the abstract simplicial complex defined by 
    \[\check{C}_{r}(P)=\{\sigma\subset P\,|\, \bigcap_{v\in \sigma}B_r(v)\neq \varnothing\},\]
    where $B_r(v)$ is the closed ball of radius $r$ in $(X,d)$ centered at $v$.
    The Vietoris-Rips complex at scale $r$ of $P$ is obtained by constructing a graph on $P$, $(P,E_r)$ by declaring that $xE_ry$ if and only if $d(x,y)\le r$, and then taking the clique complex of this graph. Computing the \v{C}ech complex is hard due to the complicated combinatorics and thus the Vietoris-Rips complex is a preferred choice, as one only needs to consider pairwise distances in the construction. 
    The famous nerve theorem states that for a paracompact Hausdorff topological space $X$, the \v{C}ech nerve of a ``good" cover is homotopy equivalent to $X$. If for example $(X,d)$ is the Euclidean space $\mathbb{R}^m$, then the \v{C}ech complex described above is precisely the nerve of the cover by closed balls of radius $r$, centered at points of $P$. This cover is ``good", meaning that all finite intersections of balls are contractible, and thus the nerve theorem says that $\bigcup_{p\in P}B_r(p)$ is homotopy equivalent to $|\check{C}_r(p)|$. Therefore, even though the \v{C}ech complex is hard to compute, it is considered desirable as it preserves the homotopy type of $\bigcup_{p\in P}B_r(p)$. On the other hand, the Vietoris-Rips complex in general will not have the same homotopy type as $\bigcup_{p\in P}B_r(p)$ \cite{adamaszek2017homotopy}. Our work instead shows that the Vietoris-Rips complex always preserves the weak homotopy type of the underlying graph $(P,E_r)$.
\end{example}

\appendix
\label{section:appendix}

\section{Proofs about interior operators}
We have the following characterization of continuity of maps of closure spaces in terms of interiors of sets. 

\begin{lemma}
\label{lemma:continuity_characterization_in_terms_of_interiors}
    A function $f\colon (X,c)\to (Y,d)$ between closure spaces is continuous if and only if $f^{-1}(i_d(A))\subset i_c(f^{-1}(A))$ for all $A\subset Y$. 
\end{lemma}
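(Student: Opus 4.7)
The plan is to translate the closure-operator definition of continuity into a statement about interiors via the duality $i(S) = X \setminus c(X \setminus S)$, and then check that every step is reversible so that the two conditions are equivalent.

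First, I would record the standard reformulation of continuity in terms of preimages, namely that $f\colon(X,c)\to(Y,d)$ is continuous if and only if $c(f^{-1}(B)) \subset f^{-1}(d(B))$ for every $B \subset Y$. This follows from the original definition $f(c(A)) \subset d(f(A))$: in one direction, set $A = f^{-1}(B)$ and use $f(f^{-1}(B)) \subset B$ together with monotonicity of $d$; in the other direction, set $B = f(A)$ and use $A \subset f^{-1}(f(A))$ with monotonicity of $c$.

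Next, given any $A \subset Y$, I would apply this preimage version to $B = Y \setminus A$. Using $c(S) = X \setminus i_c(X \setminus S)$ and $d(T) = Y \setminus i_d(Y \setminus T)$, together with the identity $f^{-1}(Y \setminus T) = X \setminus f^{-1}(T)$, the inclusion $c(f^{-1}(Y\setminus A)) \subset f^{-1}(d(Y\setminus A))$ becomes
\begin{equation*}
X \setminus i_c(f^{-1}(A)) \;\subset\; X \setminus f^{-1}(i_d(A)),
\end{equation*}
which, upon taking complements in $X$, is exactly $f^{-1}(i_d(A)) \subset i_c(f^{-1}(A))$. Since $B \mapsto Y \setminus B$ is a bijection $2^Y \to 2^Y$, the quantifier ``for all $A \subset Y$'' matches the quantifier ``for all $B \subset Y$'' and nothing is lost in this substitution.

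Finally, every manipulation above is reversible: the complement identities are involutive, the bijection $B \leftrightarrow Y \setminus B$ is its own inverse, and the equivalence between $f(c(A)) \subset d(f(A))$ and $c(f^{-1}(B)) \subset f^{-1}(d(B))$ is genuinely an ``if and only if''. So the chain of equivalences gives the desired characterization. There is no serious obstacle here; the only place to be careful is keeping track of the complements when converting closures to interiors, so I would lay out that single calculation explicitly and let the quantifier swap finish the argument in both directions at once.
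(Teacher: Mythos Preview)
Your proposal is correct and follows essentially the same route as the paper: both pass through the preimage reformulation $c(f^{-1}(B))\subset f^{-1}(d(B))$ of continuity and then complement via $B=Y\setminus A$ to convert closures into interiors. The only cosmetic difference is that the paper writes out the two directions as separate chains of equalities, whereas you handle both at once by noting that each step is reversible.
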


\begin{proof}
    Suppose that $f$ is continuous, that is $f(c(A))\subset d(f(A))$ for all $A\subset X$. Since closure operations on sets are order-preserving relations under $\subset$, we have that for all $A\subset Y$, $c(f^{-1}(A))\subset f^{-1}(d(A))$ is an equivalent characterization of continuity. Let $A\subset Y$. Then
    \begin{align*}
        f^{-1}(i_d(A))&=f^{-1}(Y\setminus d(Y\setminus A))
        =f^{-1}(Y)\setminus f^{-1}(d(Y\setminus A))=\\
        &=X\setminus f^{-1}(d(Y\setminus A))\subset X\setminus c(f^{-1}(Y\setminus A))=\\
        &=X\setminus c(f^{-1}(Y)\setminus f^{-1}(A))
        =X\setminus c(X\setminus f^{-1}(A))= i_c(f^{-1}(A)).
    \end{align*}
    Now suppose that for all $A\subset Y$, $f^{-1}(i_d(A))\subset i_c(f^{-1}(A))$. Let $A\subset Y$. Then 
    \begin{gather*}
        f^{-1}(i_d(Y\setminus A))=f^{-1}(Y\setminus d(A))=f^{-1}(Y)\setminus f^{-1}(d(A))=X\setminus f^{-1}(d(A))\subset i_c(f^{-1}(Y\setminus A)).
    \end{gather*}
    On the other hand
    \begin{gather*}
        i_c(f^{-1}(Y\setminus A))=X\setminus c(X\setminus f^{-1}(Y\setminus A))=X\setminus c(X\setminus (f^{-1}(Y)\setminus f^{-1}(A))=\\
        =X\setminus c(X\setminus (X\setminus f^{-1}(A)))=X\setminus c(f^{-1}(A)).
    \end{gather*}
    From these it follows that $c(f^{-1}(A))\subset f^{-1}(d(A))$ and hence $f$ is continuous.
\end{proof}




Let $(X,\Lambda_X)$ and $(Y,\Lambda_Y)$ be pseudotopological spaces. Let $c_X$ and $c_Y$ be the adherence operators for $\Lambda_X$ and $\Lambda_Y$ respectively. A continuous map $f\colon (X,\Lambda_X)\to (Y,\Lambda_Y)$ is a \textbf{closed map} if for all $A\subset X$ that are closed, i.e., $c_X(A)=A$, we have $c_Y(f(A))=f(A)$, that is $f(A)$ is closed in $(Y,\Lambda_Y)$. Note that this immediately also defines a closed map of closure and topological spaces.

Assuming that the map $i$ in \eqref{cd:pushout} (the definition of a pushout of $\cat{C}$-spaces, \cref{def:pushout}) is an inclusion, we can take $Z=(X\setminus A)\sqcup Y$ in the definition of the pushout of $\cat{C}$-spaces, for $\cat{C}\in \{\cat{Top},\cat{Cl},\cat{PsTop}\}$. We then have the following characterization of the adherence operator on the pushout.

\begin{lemma}
    \label{lemma:pushout_closures_along_closed_inclusions}
    Assume that $i$ in the pushout \eqref{cd:pushout} is a closed inclusion. Then the following are true:
    \begin{enumerate}[left=0pt]
        \item Let $B\subset Y\subset Z$. Then $e(B)=d(B)$.
        \item Let $B\subset X\setminus A\subset Z$. Then $e(B)=g(c(B))$.
        \item Let $B\subset Z$. Then $e(B)=d(B\cap Y)\cup g(c(B\cap (X\setminus A)))$.
        \item $j$ is a closed inclusion.
        \item $i_e(B)=i_{X\setminus A}(B\cap (X\setminus A))\sqcup (i_d(B\cap Y)\setminus f(A\cap c((X\setminus A)\setminus B)))$, where $i_{X\setminus A}$ is the interior operator associated to the subspace closure operator $c_{X\setminus A}$ on $X\setminus A$ induced by the closure operator $c$ for the closure space $(X,c)$. Finally, $i_e$ and $i_d$ are the interior operators for $(Z,e)$ and $(Y,d)$, respectively.
    \end{enumerate}
\end{lemma}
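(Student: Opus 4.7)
The plan is to derive all five assertions from the explicit pushout formula
$e(B) = j(d(j^{-1}(B))) \cup g(c(g^{-1}(B)))$
by analyzing $j^{-1}$ and $g^{-1}$ under the identification $Z = (X\setminus A) \sqcup Y$, together with the hypothesis that $A$ is closed in $(X,c)$. The closedness assumption enters through a single observation: for any $S \subset A$ one has $c(S) \subset c(A) = A$, so the ambient closure $c$ restricted to subsets of $A$ agrees with the subspace closure $c_A$; continuity of $f\colon(A,c_A)\to (Y,d)$ then yields $f(c(S)) \subset d(f(S))$.

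First I would record the direct preimage computations $j^{-1}(B) = B\cap Y$ and $g^{-1}(B) = (B\cap(X\setminus A)) \cup f^{-1}(B\cap Y)$. Part (1) then falls out immediately: for $B \subset Y$ the preimage $g^{-1}(B) = f^{-1}(B)$ is contained in $A$, so the preliminary observation gives $g(c(f^{-1}(B))) = f(c(f^{-1}(B))) \subset d(B)$, and combined with $j(d(B)) = d(B)$ this yields $e(B) = d(B)$. Part (2) is easier: for $B \subset X \setminus A$ one has $j^{-1}(B) = \varnothing$ and $g^{-1}(B) = B$, reducing the formula to $e(B) = g(c(B))$. Part (3) follows by decomposing $B = (B\cap Y) \sqcup (B\cap(X\setminus A))$ and invoking additivity $e(S_1\cup S_2) = e(S_1)\cup e(S_2)$, which holds because the adherence operator of any pseudotopological space is a closure operator. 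Part (4) is then an immediate corollary of part (1), since any $d$-closed $B \subset Y$ satisfies $e(B) = d(B) = B$.

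For part (5) I would apply part (3) to $Z\setminus B$ and compute $i_e(B) = Z \setminus e(Z\setminus B)$ piecewise in the disjoint union $Z = (X\setminus A) \sqcup Y$. The term $Z\setminus d(Y\setminus(B\cap Y))$ contributes $(X\setminus A) \sqcup i_d(B\cap Y)$. The term $Z\setminus g(c((X\setminus A)\setminus B))$ requires splitting $g(c((X\setminus A)\setminus B))$ into its intersection with $X\setminus A$, which equals the subspace closure $c_{X\setminus A}((X\setminus A)\setminus B)$, and its intersection with $Y$, which equals $f(A\cap c((X\setminus A)\setminus B))$. Intersecting the two resulting sets along the disjoint union, together with the identification $(X\setminus A)\setminus c_{X\setminus A}((X\setminus A)\setminus B) = i_{X\setminus A}(B\cap(X\setminus A))$, delivers the claimed formula. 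The main obstacle is precisely the bookkeeping in this last step, where one must keep the pieces in $X\setminus A$ and in $Y$ straight and recognize the subspace interior on $X\setminus A$; everything else is routine once the closed-inclusion hypothesis is used to confine closures of subsets of $A$ inside $A$.
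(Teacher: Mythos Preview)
Your proposal is correct and follows essentially the same route as the paper. The paper simply cites \cite{bubenik2023cw} for parts (1)--(4) and then carries out part (5) exactly as you describe: apply (3) to $Z\setminus B$, split $g(c((X\setminus A)\setminus B))$ into its $X\setminus A$ and $A$ parts (yielding the subspace closure $c_{X\setminus A}((X\setminus A)\setminus B)$ and $f(A\cap c((X\setminus A)\setminus B))$ respectively), and take complements in the disjoint union.
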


\begin{proof}
    The proofs of $(1)-(4)$ follow almost immediately from definitions and are also available in \cite[Lemma 2.4]{bubenik2023cw} in the case of closure spaces, but they apply verbatim for the adherence operators in question. All that is left is to prove $(5)$.

    By definition we have $Z\setminus i_e(B)=e(Z\setminus B)$. From $(3)$ we have 

\begin{equation*} \label{eq:2}
e(Z\setminus B)=d((Z\setminus B)\cap Y)\cup g(c((Z\setminus B)\cap (X\setminus A))). 
\end{equation*}
Observe that 
    
\begin{equation*} 
(Z\setminus B)\cap Y=((Y\sqcup (X\setminus A))\setminus B)\cap Y=Y\setminus B=Y\setminus (B\cap Y).
\end{equation*}
Similarly 

\begin{equation*} \label{eq:4}
(Z\setminus B)\cap (X\setminus A)=(X\setminus A)\setminus B=(X\setminus A)\setminus (B\cap (X\setminus A)).
\end{equation*}
Furthermore
 \begin{eqnarray*}
g(c((X\setminus A)\setminus B)))&=&g(c((X\setminus A)\setminus B)\cap (X\setminus A))\sqcup g(c((X\setminus A)\setminus B)\cap A)\\
           &=&g(c_{X\setminus A}((X\setminus A)\setminus B))\sqcup g(c((X\setminus A)\setminus B)\cap A)\\
           &=&c_{X\setminus A}((X\setminus A)\setminus B)\sqcup f((c(X\setminus A)\setminus B)\cap A).
 \end{eqnarray*}
Combining all this we get:
 \begin{eqnarray*}
        i_e(B)&=& Z\setminus e(Z\setminus B)\\ 
        &=& Z\setminus [(d((Z\setminus B) \cap Y))\cup g(c((Z\setminus B) \cap (X\setminus A))]\\
        &=& Z\setminus (d(Y\setminus B)\cup g(c (X\setminus A)\setminus B))\\ 
        &=&(Z\setminus d(Y\setminus B))\cap (Z\setminus g(c((X\setminus A)\setminus B)))  \\
        &=&((Y\setminus d(Y\setminus B))\sqcup (X\setminus A))\cap  ((X\setminus A)\setminus c_{X\setminus A}((X\setminus A)\setminus B)\sqcup Y\setminus f(A\cap c(X\setminus A)\setminus B))\\
        &=&(i_d(B\cap Y)\sqcup (X\setminus A))\cap (i_{X\setminus A}(B\cap (X\setminus A))\sqcup Y\setminus f(A\cap c((X\setminus A)\setminus B))) \\
        &=& i_{X\setminus A}(B\cap (X\setminus A))\sqcup (i_d(B\cap Y)\setminus f(A\cap c((X\setminus A)\setminus B))). 
      \end{eqnarray*} 
\end{proof}

Recall the definition of excisive triads and maps of excisive triads (\cref{section:weak_homotopy_equivalence}). 

\begin{example}
\label{example:excisive_triad_mapping_cylinder}
    Let $f\colon(X;A,B)\to (Y;C,D)$ be a map of excisive triads. One might try to replace $Y$ by the mapping cylinder of $f$, and hope to have a map (inclusion) of excisive triads $\Tilde{f}\colon (X;A,B)\to (M_f;M_{f|_{A}},M_{f|_{B}})$ where $M_{f|_{A}}$ and $M_{f|_{B}}$ are the mapping cylinders of restrictions of $f$ to $A$ and $B$ respectively. Clearly, $\Tilde{f}$ is a map of triads $\Tilde{f}\colon (X;A,B)\to (M_f;M_{f|_{A}},M_{f|_{B}})$, however $(M_f;M_{f|_{A}},M_{f|_{B}})$ may not be excisive. As an example, we have the following construction borrowed from \cite[Chapter 4.K]{hatcher2002algebraictopology}. Suppose $X=\{A,B\}$ i.e., $X$ has two points $A$ and $B$, with the discrete topology and if $Y=\{C\}=\{D\}$ is a singleton space and $f\colon X\to Y$ is the constant map. Then the interior of $M_{f|_{A}}$ in $M_f$ will not contain the point $\{C\}$ in $M_f$. The same is true for the interior of $M_{f|_{B}}$ and thus $(M_f;M_{f|_{A}},M_{f|_{B}})$ is not excisive. However, one can still show $\Tilde{f}$ is a map of excisive triads, but we need to be more nuanced in selecting the subsets of $M_f$.
\end{example}

The following result shows us how to select interior covers of mapping cylinders that are induced from interior covers of the underlying spaces in order to avoid the issue from \cref{example:excisive_triad_mapping_cylinder}.

\begin{lemma}
\label{lemma:excisive_triad_mapping_cylinder}
    Let $f\colon (X;X_1,X_2)\to (Y;Y_1,Y_2)$ be a map of excisive triads. Let $M_f$ be the mapping cylinder of $f$. Let $M_i=M_{f|_{X_i}}\cup f^{-1}(Y_i)\times (0,\frac{1}{2})$, for $i=1,2$, where $M_{f|_{X_i}}=((X_i\times I)\sqcup Y_i)/((x,0)\sim f(x))$ is the mapping cylinder of $f|_{X_i}\colon X_i\to Y_i$. Then $(M_f;M_1,M_2)$ is an excisive triad and the induced map $\Tilde{f}\colon X\to M_f$ is a map of excisive triads $\Tilde{f}\colon (X;X_1,X_2)\to (M_f;M_1,M_2)$. 
\end{lemma}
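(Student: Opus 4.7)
The plan is to break the verification into two independent assertions: that $\Tilde{f}$ respects the triad structures (nearly automatic), and that $\{M_1,M_2\}$ is an interior cover of $M_f$ (the real content). For the first, from Proposition~\ref{prop: mapping cylinder} we have $\Tilde{f}(x)=[x,1]\in M_{f|_{X_i}}\subset M_i$ whenever $x\in X_i$, so $\Tilde{f}(X_i)\subset M_i$. The bulk of the argument is then showing $M_f=i_e(M_1)\cup i_e(M_2)$, where $e$ is the adherence operator on $M_f$.

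Since $M_f$ is the pushout along the closed inclusion $i_0\colon X\hookrightarrow X\times I$, I would apply Lemma~\ref{lemma:pushout_closures_along_closed_inclusions}(5), specialized so that ``$A$''$=X\times\{0\}$, ``$X$''$=X\times I$, ``$Y$''$=Y$, to get
\[
i_e(B)=i_{X\times(0,1]}\bigl(B\cap(X\times(0,1])\bigr)\sqcup\Bigl(i_d(B\cap Y)\setminus f\bigl(X\times\{0\}\cap c_{X\times I}\bigl((X\times(0,1])\setminus B\bigr)\bigr)\Bigr),
\]
identifying $M_f$ with $Y\sqcup(X\times(0,1])$. For $B=M_i$ one has $M_i\cap Y=Y_i$ and $M_i\cap(X\times(0,1])=(X_i\times(0,1])\cup(f^{-1}(Y_i)\times(0,1/2))$. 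For the $X\times(0,1]$ contribution to the union $i_e(M_1)\cup i_e(M_2)$: given $(x,t)$ with $t>0$, the hypothesis that $\{X_1,X_2\}$ is an interior cover of $X$ produces $j$ with $x$ in the interior of $X_j$, so a small product neighborhood of $(x,t)$ in the subspace $X\times(0,1]$ lies in $X_j\times(0,1]\subset M_j$, placing $(x,t)$ in $i_e(M_j)$. For the $Y$ contribution I would compute explicitly
\[
(X\times(0,1])\setminus M_i=\bigl((X\setminus X_i)\times[1/2,1]\bigr)\cup\bigl(((X\setminus X_i)\cap f^{-1}(Y\setminus Y_i))\times(0,1/2)\bigr),
\]
observe that the closure of the first piece in $X\times I$ is disjoint from $X\times\{0\}$ (its $I$-coordinate is bounded below by $1/2$), and compute the intersection of the closure of the second piece with $X\times\{0\}$ as $c_X((X\setminus X_i)\cap f^{-1}(Y\setminus Y_i))\times\{0\}$; continuity of $f$ pushes its image under $f$ into $c_Y(Y\setminus Y_i)=Y\setminus i_d(Y_i)$. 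Subtracting a subset of $Y\setminus i_d(Y_i)$ from $i_d(Y_i)$ changes nothing, so $i_d(Y_i)\subset i_e(M_i)\cap Y$; combined with $i_d(Y_1)\cup i_d(Y_2)=Y$ (from $(Y;Y_1,Y_2)$ being excisive), this finishes the cover.

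The main obstacle is recognizing why the ``open collar'' enlargement $f^{-1}(Y_i)\times(0,1/2)$ is needed at all: by the pushout structure, a neighborhood in $M_f$ of a point $y\in Y_i$ must include a small cylinder over a neighborhood of the entire fibre $f^{-1}(y)$ in $X$, and such a neighborhood typically spills outside $X_i$, so $Y_i$ need not be interior to $M_{f|_{X_i}}$ inside $M_f$. The collar supplies exactly this missing cylinder room, as witnessed in the computation by the fact that the complement $(X\times(0,1])\setminus M_i$ is forced up to vertical coordinate $\geq 1/2$ on the ``bad'' directions, so its closure cannot reach $X\times\{0\}$ there. Once this is seen, the interior formula of Lemma~\ref{lemma:pushout_closures_along_closed_inclusions}(5) does the rest.
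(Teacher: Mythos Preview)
Your proposal is correct and follows essentially the same approach as the paper: both verify $\Tilde{f}(X_i)\subset M_i$ trivially, invoke the interior formula of Lemma~\ref{lemma:pushout_closures_along_closed_inclusions}(5) for the pushout along the closed inclusion $i_0$, compute $(X\times(0,1])\setminus M_i$ explicitly, and deduce $i_d(Y_i)\sqcup (i_c(X_i)\times(0,1])\subset i_e(M_i)$ so that the excisiveness of the original triads yields the cover. Your treatment of the $X\times(0,1]$ part is slightly more direct (arguing pointwise rather than computing the full interior as the paper does), and your continuity step uses $f(c_X(f^{-1}(Y\setminus Y_i)))\subset c_Y(Y\setminus Y_i)$ where the paper phrases the same inclusion via $f^{-1}(i_d(Y_i))\subset i_c(f^{-1}(Y_i))$, but these are equivalent formulations of the same argument.
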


\begin{proof}[Proof of \cref{lemma:excisive_triad_mapping_cylinder}]
    Clearly by construction we have $\Tilde{f}(X_i)\subset M_i$ for $i=1,2$. What is left to argue is that $M_1$ and $M_2$ form an interior cover of $M_f$. Let $i_f$ and $c_f$ be the interior and adherence operators of $M_f$, respectively. We rely on \cref{lemma:pushout_closures_along_closed_inclusions} to compute interiors of $M_1$ and $M_2$ in $M_f$. First we explicitly match the notation in the pushouts in \cref{def:pushout} and \cref{def:mapping_cylinder}. That is, we replace $A$ by $X$, $X\setminus A$ by $X\times (0,1]$, $Z$ by $M_f$, and we keep $Y$ as is. Note also that the inclusion $i_0\colon X\hookrightarrow X\times [0,1]$ is closed. Therefore by \cref{lemma:pushout_closures_along_closed_inclusions} we have:
    \[i_f(M_1)=i_{X\times (0,1]}(M_1\cap (X\times (0,1])))\sqcup (i_d(M_1\cap Y)\setminus f((X\times \{0\})\cap c\times \tau ((X\times (0,1])\setminus M_1))),\]
    where $i_{X\times (0,1]}$ is the interior operator for the subspace $X\times (0,1]$ of $X\times I$, and $c\times \tau$ is the adherence for $X\times I$. Note that $M_1\cap Y=Y_1$. Furthermore, 
    \begin{eqnarray*}
        X\times (0,1]\setminus M_1&=&X\times (0,1]\setminus (X_1\times [0,1]\cup f^{-1}(Y_1)\times (0,\frac{1}{2}))\\
        &=&((X\setminus X_1)\times (0,1])\cap ((X\setminus f^{-1}(Y_1)\times (0,\frac{1}{2}))\sqcup (X\times [\frac{1}{2},1]))\\
        &=&(X\setminus f^{-1}(Y_1)\times (0,\frac{1}{2}))\sqcup ((X\setminus X_1)\times [\frac{1}{2},1]),
    \end{eqnarray*}
    since $X\setminus f^{-1}(Y_1)\subset X\setminus X_1$ (as $X_1\subset f^{-1}(Y_1)$). Therefore 
    \begin{eqnarray*}
        c\times \tau((X\times (0,1])\setminus M_1))&=&c(X\setminus f^{-1}(Y_1))\times [0,\frac{1}{2}]\cup c(X\setminus X_1)\times[\frac{1}{2},1]\\
        &=&X\setminus i_c(f^{-1}(Y_1)))\times [0,\frac{1}{2}]\cup c(X\setminus X_1)\times [\frac{1}{2},1].
    \end{eqnarray*}
    Hence 
    \begin{gather*}
        f((X\times \{0\})\cap c\times \tau((X\times (0,1])\setminus M_1))=f(X\setminus i_c(f^{-1}(Y_1)))\subset f(X\setminus f^{-1}(i_d(Y_1)))
    \end{gather*}
    since $f^{-1}(i_d(Y_1))\subset i_c(f^{-1}(Y_1))$ by \cref{lemma:continuity_characterization_in_terms_of_interiors}. Furthermore $i_d(Y_1)\setminus f(X\setminus f^{-1}(i_d(Y_1)))=i_d(Y_1)\subset i_d(Y_1)\setminus f(X\setminus i_c(f^{-1}(Y_1)))$. All of this together gives
    \[i_d(Y_1)\subset i_d(M_1\cap Y)\setminus f((X\times \{0\})\cap c\times \tau ((X\times (0,1])\setminus M_1)).\]

    On the other hand, we have that 
    \begin{gather*}
        c_{X\times (0,1]}((X\times (0,1])\setminus (M_1\cap (X\times (0,1])))=c_{X\times (0,1]}((X\times (0,1])\setminus M_1)\\
        =c_{X\times (0,1]}(((X\setminus X_1)\times [\frac{1}{2},1])\cup ((X\setminus f^{-1}(Y_1)\times (0,\frac{1}{2}))=\\
        =(c(X\setminus X_1)\times [\frac{1}{2},1])\cup (c(X\setminus f^{-1}(Y_1))\times (0,\frac{1}{2}]).
    \end{gather*}
    Therefore 
    \begin{eqnarray*}
        i_{X\times (0,1]}(M_1\cap (X\times (0,1]))&=&(X\times (0,1])\setminus c_{X\times (0,1]}((X\times (0,1])\setminus (M_1\cap (X\times (0,1])))\\
        &=&(X\times (0,1])\setminus ((c(X\setminus X_1)\times [\frac{1}{2},1])
        \\
        &&\cup (c(X\setminus f^{-1}(Y_1))\times (0,\frac{1}{2}]))\\
        &=&((X\setminus c(X\setminus X_1)\times [\frac{1}{2},1])\sqcup (X\times (0,\frac{1}{2})))\\
        &&\cap ((X\setminus c(X\setminus f^{-1}(Y_1))\times (0,\frac{1}{2}])\sqcup(X\times (\frac{1}{2},1])))\\
       &=&((i_c(X_1)\times [\frac{1}{2},1])\sqcup (X\times (0,\frac{1}{2})))\\
       &&\cap ((i_c(f^{-1}(Y_1))\times (0,\frac{1}{2}]))\sqcup (X\times (\frac{1}{2},1)))\\
        &=&(i_c(X_1)\times [\frac{1}{2},1])\sqcup (i_c(f^{-1}(Y_1))\times (0,\frac{1}{2})),
    \end{eqnarray*}
    since $X_1\subset f^{-1}(Y_1)$, implying that $i_c(X_1)\subset i_c(f^{-1}(Y_1))$. For the same reason and the above equality, we have 
    \[i_c(X_1)\times (0,1]\subset i_{X\times (0,1]}(M_1\cap (X\times (0,1])).\]
    In everything so far, we could have replaced $X_1$ and $X_2$ and $Y_1$ by $Y_2$ and gotten analogous statements. Finally, we thus have $(i_c(X_i)\times(0,1])\sqcup i_d(Y_i)\subset i_f(M_i)$, for $i=1,2$. Since $X_1,X_2$ and $Y_1,Y_2$ were interior covers for $X$ and $Y$, respectively, the result follows. 
\end{proof}

\section{Proof of \cref{theorem:n_equivalences}}

Here we prove \cref{theorem:n_equivalences}. Before we proceed, we need to also prove a couple of preliminary results. For convenience sake we will work with the following alternate but equivalent definitions of homotopy groups in what follows. 

Let $D^n$ be the topological $n$-disk and let $S^{n-1}$ be its boundary, $\partial D^n$, i.e. the topological $(n-1)$-sphere. Note that maps $(I^n,\partial I^n)\to (X,x_0)$ in $\cat{PsTop}$ are the same as pointed maps $(I^n/\partial I^n,\partial I^n/\partial I^n)\to (X,x_0)$. Thus by \cref{lemma:homotopy_quotients}, $[(I^n,\partial I^n),(X,x_0)]\cong [I^n/\partial I^n,X]^0$. Note that in $\cat{PsTop}$, $I^n/\partial I^n$ is a topological space by \cref{prop:finite_cw_complexes_are_topological} and thus homeomorphic to $S^n$. Thus we can define 
\[\pi_n(X,x_0)=[(S^n,s_0), (X,x_0)],\] 
where $s_0=\partial I^n/\partial I^n$. The group operation on $\pi_n(X,x_0)$ is then defined as follows. For $[f],[g]\in \pi_n(X,x_0)$, let $[f]+[g]$ be represented by the homotopy class of the composition $S^n\xrightarrow{q}S^n\vee S^n\xrightarrow{f\vee g} X$, where $q$ is a quotient map collapsing the equator in $S^n$ to a point, and $S^n\vee S^n$ is a wedge of spheres, which is topological by \cref{prop:finite_cw_complexes_are_topological}. Similarly, we also define \[\pi_n(X,A,x_0)=[(D^n,S^{n-1},s_0),(X,A,x_0)],\] 
noting that collapsing $J^{n-1}$ to a point converts $(I^n,\partial I^n,J^{n-1})$ to $(D^n,S^{n-1},s_0)$ and this will not affect homotopy classes of maps out of either space, by \cref{lemma:homotopy_quotients}.

\begin{lemma}
\label{lemma:compression_lemma}
    A map $f:(D^n,S^{n-1},s_0)\to (X,A,x_0)$ represents zero in $\pi_n(X,A,x_0)$ if and only if it is homotopic relative to $S^{n-1}$ to a map $D^n\to A$.
\end{lemma}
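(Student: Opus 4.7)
The plan is to prove the two implications separately. The easy direction ($\Leftarrow$) is essentially immediate: given a homotopy $K\colon f\simeq g$ rel $S^{n-1}$ with $g(D^n)\subset A$, observe that $K(x,t)=f(x)\in A$ for $x\in S^{n-1}$ and $K(s_0,t)=x_0$, so $K$ is automatically a homotopy of triples $(D^n,S^{n-1},s_0)\to(X,A,x_0)$, yielding $[f]=[g]$ in $\pi_n(X,A,x_0)$. To see $[g]=0$, compose $g$ with the radial contraction $\rho(y,t)=(1-t)y+ts_0$ of $D^n$ to $s_0$; the composite $G=g\circ\rho\colon D^n\times I\to X$ factors through $g(D^n)\subset A$ and provides the desired triples nullhomotopy of $g$.

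For the harder direction ($\Rightarrow$), assume $[f]=0$ and fix a triples nullhomotopy $H\colon D^n\times I\to X$ with $H_0=f$, $H_1=c_{x_0}$, $H(S^{n-1}\times I)\subset A$, and $H(s_0,-)=x_0$. I will reparameterize $H$ by defining
\[
 K(y,t)\;=\;H\!\left(\frac{y}{1-t(1-|y|)},\;t(1-|y|)\right),\qquad K(0,1):=x_0.
\]
The denominator $1-t(1-|y|)=(1-t)+t|y|$ is strictly positive off $(0,1)$, and the inequality $|y|\le 1-t(1-|y|)$ ensures the first coordinate lies in $D^n$. A direct computation gives $K(y,0)=f(y)$; $K(x,t)=H(x,0)=f(x)$ for $x\in S^{n-1}$ (so the homotopy is constant on $S^{n-1}$); and $K(y,1)=H(y/|y|,\,1-|y|)\in A$ for $y\ne 0$, while $K(0,1)=x_0\in A$. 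Thus $K_1$ is the map into $A$ we seek, and $K\colon f\simeq K_1$ is a homotopy rel $S^{n-1}$ once continuity of $K$ is verified.

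The main obstacle is establishing continuity of $K$ at the singular point $(0,1)$, where the formula is indeterminate; elsewhere $K$ is continuous as a composition of continuous maps. Since $X$ is only pseudotopological, sequential or metric arguments are unavailable, so I will use the ultrafilter criterion for convergence in $\cat{PsTop}$. The ingredients are: $D^n\times I$ is compact Hausdorff (being a finite CW complex, \cref{prop:finite_cw_complexes_are_topological}); the ``time'' component $\beta(y,t)=t(1-|y|)$ is polynomial, hence continuous everywhere; and $H$ is constantly $x_0$ on $D^n\times\{1\}$. Given any filter $\lambda\to(0,1)$ in $D^n\times I$ and any ultrafilter $\nu\supset K(\lambda)$ on $X$, a standard ultrafilter lifting produces an ultrafilter $\mu\supset\lambda$ on $D^n\times I$ with $K(\mu)=\nu$. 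Then $\mu\to(0,1)$ in $D^n\times I$, so compactness of $D^n$ forces the ``radial'' component $\alpha(y,t)=y/(1-t(1-|y|))$ pushed along $\mu$ to converge to some $a\in D^n$, while $\beta(\mu)\to 1$. By continuity of $H$, $\nu=K(\mu)=H(\alpha(\mu),\beta(\mu))\to H(a,1)=x_0$. The pseudotopological criterion then yields $K(\lambda)\to x_0$, completing the continuity check.
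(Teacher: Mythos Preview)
Your proof is correct and follows the same strategy as the paper: the easy direction is identical, and for the hard direction both arguments reparametrize the given triples nullhomotopy $H$ so that the terminal slice lands in the part of $D^n\times I$ that $H$ sends into $A$.

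The only substantive difference is that the paper describes the reparametrization geometrically via the family of ``cups'' $D^n\times\{t\}\cup S^{n-1}\times[0,t]$, which can be realized by a continuous map $\Phi\colon D^n\times I\to D^n\times I$ with no singularities; then $K=H\circ\Phi$ is continuous simply as a composite, with no pseudotopological subtlety. Your explicit formula $(\alpha,\beta)(y,t)=\bigl(y/(1-t(1-|y|)),\,t(1-|y|)\bigr)$ happens to be discontinuous at $(0,1)$, which forces the ultrafilter-and-compactness argument. That argument is valid (the ultrafilter lifting you invoke holds for any set map, and compactness of $D^n$ guarantees convergence of $\alpha(\mu)$), but it is avoidable: either choose a reparametrization without a singularity, or observe that since $H_1\equiv x_0$ the map $H$ factors through the topological quotient $(D^n\times I)/(D^n\times\{1\})$ (a finite CW complex, hence topological in $\cat{PsTop}$ by \cref{prop:finite_cw_complexes_are_topological}), and $(\alpha,\beta)$ followed by this quotient \emph{does} extend continuously to $(0,1)$ because $\beta$ is continuous there with value $1$. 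Either route keeps the entire reparametrization step in $\cat{Top}$.
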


\begin{proof}
    Suppose $f\simeq g$ (rel $S^{n-1}$), with $g:D^n\to A$. Then $[f]=[g]$ in $\pi_{n}(X,A,x_0)$ and $[g]=0$ via the homotopy obtained by composing $g$ with a deformation retraction of $D^n$ onto $s_0$.

    Conversely, if $[f]=0$ in $\pi_n(X,A,x_0)$ via a homotopy $H:D^n\times I\to X$, then restricting $H$ to a family of $n$-disks in $D^n\times I$ starting with $D^n\times \{0\}$ and ending with the disk $D^n\times \{1\}\cup S^{n-1}\times I$, $\{D^n\times \{t\}\cup S^{n-1}\times [0,t]\}_{t\in I}$, all the disks in the family having the same boundary $S^{n-1}\times \{0\}$, then we get a homotopy from $f$ to a map into $A$, stationary on $S^{n-1}$.
\end{proof}

The following proofs of \cref{lemma:hatcher_lemma,theorem:n_equivalences} are taken almost verbatim from the proof of \cite[Proposition 4K.1]{hatcher2002algebraictopology}, with small modifications invoking \cref{lemma:finite_cover_whose_image_is_contained_in_covering_system,prop:pasting_lemma} to make sure the arguments carry over from $\cat{Top}$ to $\cat{PsTop}$.

\begin{lemma}
\label{lemma:hatcher_lemma}
    Let $j:(X,A)\hookrightarrow (Y,B)$ be an inclusion of pairs in $\cat{PsTop}$. The following are equivalent for all $n\ge 1$:
    \begin{enumerate}[left=0pt]
        \item  For all choices of base points the map $j_*:\pi_m(X, A)\to \pi_m(Y,B)$, induced by inclusion, is surjective for $m = n$ and is injective for $m = n - 1$.
        \item Let $\partial D^n$ be written as the union of hemispheres $\partial_+D^n$ and $\partial_-D^n$ intersecting in $S^{n-2}$ at the ``equator". Then every map
        \[(D^n\times \{0\} \cup \partial_+D^n\times I, \partial_-D^n\times \{0\} \cup S^{n-2}\times I) \to (Y,B),\]
        taking $(\partial_+ D^n\times \{1\}, S^{n-2}\times\{1\})$ to $(X, A)$, extends to map $(D^n\times I, \partial_-D^n\times I)\to (Y,B)$ taking $(D^n\times \{1\}, \partial_-D^n\times \{1\})$ to $(X, A)$.
        \item Condition (2) with the added hypothesis that the restriction of the given map to $\partial_+D^n\times I$ is independent of the $I$ coordinate.
    \end{enumerate}
\end{lemma}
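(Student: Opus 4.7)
The plan is to prove the chain of implications $(2)\Rightarrow(3)$, $(3)\Rightarrow(2)$, $(2)\Rightarrow(1)$, and $(1)\Rightarrow(2)$. The first is trivial since $(3)$ is just a restriction of the hypothesis of $(2)$, so I focus on the remaining three.

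For $(3)\Rightarrow(2)$, the key geometric point is that the ``box-with-a-flap'' $W := D^n\times\{0\}\cup\partial_+D^n\times I$ admits a strong deformation retraction onto the ``capped'' subspace $W_0 := D^n\times\{0\}\cup\partial_+D^n\times\{1\}$ obtained by sliding points of $\partial_+D^n\times I$ along the flap up to $\partial_+D^n\times\{1\}$ while keeping $D^n\times\{0\}$ pointwise fixed. Continuity is clear since $W$ is a finite CW pair and hence topological by \cref{prop:finite_cw_complexes_are_topological}. Given $\phi$ as in $(2)$, precomposing with this deformation yields a homotopic map $\phi'$ whose restriction to $\partial_+D^n\times I$ is independent of the $I$-coordinate; I arrange the deformation so that $S^{n-2}\times I$ slides within itself, preserving $\phi'(S^{n-2}\times I)\subset B$. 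Applying $(3)$ to $\phi'$ produces an extension $\Phi'\colon D^n\times I\to Y$, and gluing $\Phi'$ with the linear homotopy from $\phi$ to $\phi'$ via \cref{prop:pasting_lemma} yields the desired extension of the original $\phi$.

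For $(2)\Rightarrow(1)$, I extract surjectivity at level $n$ and injectivity at level $n-1$ as special cases. For surjectivity, given $[\alpha]\in\pi_n(Y,B)$ represented by $\alpha\colon(D^n,\partial D^n)\to(Y,B)$ with basepoint placed inside $\partial_+D^n$ (hence lying in $A$), I take $\phi$ equal to $\alpha$ on $D^n\times\{0\}$ and constant at the basepoint on $\partial_+D^n\times I$; the extension from $(2)$ is a homotopy of $\alpha$ to a map into $(X,A)$, placing $[\alpha]$ in the image of $j_*$. For injectivity, given $\beta_0,\beta_1\colon(D^{n-1},\partial D^{n-1})\to(X,A)$ with $j_*[\beta_0]=j_*[\beta_1]$ witnessed by a homotopy $H$ through $(Y,B)$, I reinterpret $H$ as an input to $(2)$ by identifying $D^n\cong D^{n-1}\times I$ so that the boundary decomposition matches the required one, and feed $\beta_0,\beta_1$ into the flap data; the extension produced by $(2)$ then exhibits a homotopy in $(X,A)$ between $\beta_0$ and $\beta_1$.

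For $(1)\Rightarrow(2)$, which I expect to be the main obstacle, I proceed as follows. Given $\phi$ as in $(2)$, view $\alpha := \phi|_{D^n\times\{0\}}$ as representing $[\alpha]\in\pi_n(Y,B)$. By surjectivity at level $n$, there is $\bar\alpha\colon(D^n,\partial D^n)\to(X,A)$ with $j_*[\bar\alpha]=[\alpha]$, giving a homotopy $K\colon D^n\times I\to Y$ from $\alpha$ to $j\bar\alpha$ through $(Y,B)$. This $K$ is a candidate extension, but in general does not agree on $\partial_+D^n\times I$ with the prescribed $h := \phi|_{\partial_+D^n\times I}$. The discrepancy between $K|_{\partial_+D^n\times I}$ and $h$ (concatenated with the reverse of one of them along $\partial_+D^n\times\{0\}$) defines a relative class in $\pi_{n-1}(X,A)$ that becomes null in $\pi_{n-1}(Y,B)$; injectivity at level $n-1$ forces this class to vanish, producing a homotopy in $(X,A)$ rectifying $K|_{\partial_+D^n\times I}$ to $h$. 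I then extend this rectifying homotopy to all of $D^n$ using the homotopy extension property of finite CW pairs (\cref{corollary:CW_pairs_HEP}) and paste via \cref{prop:pasting_lemma} to obtain the required extension of $\phi$. The principal difficulty here is the bookkeeping of basepoints, orientations, and compatibility of homotopies in the discrepancy argument; all underlying continuity issues are handled uniformly by \cref{prop:finite_cw_complexes_are_topological} and \cref{prop:pasting_lemma}, so the standard topological argument transports cleanly to $\cat{PsTop}$.
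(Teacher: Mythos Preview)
Your treatment of $(2)\Leftrightarrow(3)$ matches the paper's, and your $(2)\Rightarrow(1)$ is essentially the paper's $(3)\Rightarrow(1)$ (though for surjectivity you should represent the class by a map that is \emph{constant} on all of $\partial_+D^n$, not merely based at a point of $\partial_+D^n$; this is exactly the $(I^n,\partial I^n,J^{n-1})$ model, and without it your constant-on-the-flap construction does not glue to $\alpha$ along $\partial_+D^n\times\{0\}$).

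The real gap is in $(1)\Rightarrow(2)$. You write ``view $\alpha:=\phi|_{D^n\times\{0\}}$ as representing $[\alpha]\in\pi_n(Y,B)$'', but this is not a relative class: the hypothesis only sends $\partial_-D^n\times\{0\}$ to $B$, while $\partial_+D^n\times\{0\}$ lands in $Y$ with no constraint. So there is no element of $\pi_n(Y,B)$ to which surjectivity can be applied at this stage, and the rest of your discrepancy argument never gets off the ground. The paper (following Hatcher) runs the two hypotheses in the \emph{opposite} order: first observe that $f_1:=\phi|_{\partial_+D^n\times\{1\}}$ lies in $\pi_{n-1}(X,A)$ and is null in $\pi_{n-1}(Y,B)$ (the nullhomotopy being furnished by the given $\phi$ on $D^n\times\{0\}\cup\partial_+D^n\times I$, via the compression criterion \cref{lemma:compression_lemma}); injectivity at level $n-1$ then kills $f_1$ in $\pi_{n-1}(X,A)$, allowing you to extend $\phi$ over $D^n\times\{1\}$ with $(D^n\times\{1\},\partial_-D^n\times\{1\})\to(X,A)$. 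Only now does the map on $D^n\times\{0,1\}\cup\partial_+D^n\times I\cong D^n$ carry its entire boundary into $B$, yielding a genuine class in $\pi_n(Y,B)$; surjectivity at level $n$ produces a preimage in $\pi_n(X,A)$, and the homotopy witnessing this fills the remaining $D^n\times I$. Your ingredients (HEP for finite CW pairs, pasting) are the right ones for $\cat{PsTop}$, but the order of application must be injectivity-then-surjectivity, not the reverse.
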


\begin{proof}
    Note that (2) and (3) are equivalent since the stronger hypothesis in (3) can
    always be achieved by composing with a homotopy of $D^n\times I$ that shrinks $\partial_+D^n\times I$ to $\partial_+D^n\times \{1\}$. We now show that (3) implies (1) and that (1) implies (2).

    \textbf{(3) implies (1).} Let $f:(\partial_+D^n\times \{1\},S^{n-2}\times\{1\})\to (X,A)$ be a representative of $[f]\in\pi_{n-1}(X,A)$. Suppose that the relative homotopy class of $f$ is in the kernel of $j_*$. We claim there is an extension of $f$ over $D^n\times \{0\}\cup \partial_+D^n\times I$, with the constant homotopy on $\partial_+D^n\times I$ and mapping $(D^n\times\{0\},\partial_-D^n\times \{0\})$ to $(Y,B)$. The extension is obtained in the following way. First we have copies of $f$ along the $I$ coordinate on $\partial_+D^n\times I$. Then, since $[f]=0$ in $\pi_{n-1}(Y,B)$, there is a homotopy $H:\partial_+D^n\times I\to Y$ relative to $S^{n-2}$, to a map $H_1$ whose image is contained in $B$ by \cref{lemma:compression_lemma}. There is an obvious homeomorphism between $\partial_+ D^n\times I$ and $D^n$ where $\partial_+ D^n\times \{1\}$ is identified with $\partial_- D^n$. Therefore $H$ represents the desired extension of $f$ on $D^n\times \{0\}$. The continuity of this extension is guaranteed by \cref{prop:pasting_lemma}. I think this is the proper justification. Condition (3) then gives an extension over $D^n\times I$. The restriction of this map to $D^n\times \{1\}$ takes $\partial_-D^n\times\{1\}$ to $A$. Note that every point in $\partial_+D^n$ has a reflected copy in $\partial_- D^n$ and a straight line through $D^n$ connecting them. As $f$ was defined on $\partial_+D^n\times \{1\}$, a straight line homotopy between points on $\partial D^n\times \{1\}$ through $D^n\times \{1\}$ gives a homotopy of $f$ rel $S^{n-2}\times \{1\}$ to a map $\partial_-D^n\times \{1\}\to A$ which represents $0$ in $\pi_{n-1}(X,A)$, by \cref{lemma:compression_lemma}. This shows that $[f]=0$ in $\pi_{n-1}(X,A)$ and hence $j_*$ is injective. To check surjectivity of $j_*$, represent an element of $\pi_n(Y,B)$ by a map $f:D^n\times \{0\}\to Y$ taking $\partial_-D^n\times \{0\}$ to $B$ and $\partial_+D^n\times \{0\}$ to a chosen base point. Extend $f$ over $\partial_+D^n\times I$ via the constant homotopy, then extend over $D^n\times I$ by applying (3). This gives a homotopy of $f$ to a map representing an element of the image of $j_*:\pi_n(X,A)\to \pi_n(Y,B)$. 

    \textbf{(1) implies (2).}  Suppose $f$ as in the hypothesis in (2) is given. Let $f_1$ be the restriction of $f$ to $\partial_+ D^n\times \{1\}$. Then $f_1:(\partial_+D^n\times \{1\},S^{n-2}\times \{1\})\to (X,A)$ is an element of $\pi_{n-1}(X,A)\subset \pi_{n-1}(Y,B)$ as $j_*$ is injective for $m=n-1$ by (1). The restriction of $f$ to $\partial_+D^n\times I$ gives a homotopy between $f_1$ and $f_0$, where $f_0$ is the restriction of $f$ to $\partial_+D^n\times \{0\}$. Note that $f_0:(\partial_+D^n\times\{0\},S^{n-2}\times \{0\})\to (Y,B)$. The restriction of $f$ to $D^n\times\{0\}$ gives a homotopy rel $S^{n-2}\times \{0\}$ between $f_0$ and $g_0$, where $g_0$ is the restriction of $f$ to $\partial_-D^n\times \{0\}$. Note that $g_0$ has image contained in $B$, and thus as $g_0:(\partial_-D^n\times\{0\},S^{n-2}\times \{0\})\to (B,B)$, we have that $[g_0]=0$ in $\pi_{n-1}(Y,B)$ by \cref{lemma:compression_lemma}. Thus, it follows that $[g_0]=[f_0]=[f_1]=0$ in $\pi_{n-1}(Y,B)$ and by injectivity of $j_*$, we have that $[f_1]=0$ in $\pi_{n-1}(X,A)$. Therefore, $f_1$ is homotopic rel $S^{n-2}\times \{1\}$ to a map $g_1$ whose image is contained in $A$. This homotopy can be realized on the disk $D^n\times \{1\}$ such that $g_1$ is defined on $\partial_-D^n\times\{1\}$ with image in $A$. Therefore we can extend $f$ over $D^n\times \{1\}$ such that the extension takes $(D^n\times \{1\}, \partial_-D^n\times \{1\})$ to $(X,A)$. Let  $E^n \subset \partial_-D^n\times I$ be a small disk intersecting $\partial_-D^n\times \{1\}$ in a hemisphere $\partial_+E^n$ of its boundary, and that does not intersect $\partial_-D^n\times \{0\}$. Let $\partial_-E^n\subset \partial_-D^n\times[0,1)$ be the boundary of $E^n$ disjoint from $\partial_-D^n\times \{1\}$. We can assume the extended $f$ has a constant value $x_0 \in A$ on $\partial_+E^n$. Indeed we can precompose the extension of $f$ over $D^n\times\{1\}$ with a map $D^n\to D^n$ that keeps everything fixed, except that it contracts a circular sector to a straight line joining the center of the disk to a point that gets mapped to $x_0$ under $f$. We can match the arc that determines the circular sector with the boundary $\partial_+E^n$. There is a homeomorphism between $D^n$ and $D^n\times \{0,1\}\cup \partial_+D^n\times I$ that identifies $\partial D^n$ with $\partial_-D^n\times \{1\}\cup S^{n-2}\times I\cup \partial_-D^n\times \{0\}$. Therefore, we can consider the extended $f$ as representing an element of $\pi_n(Y , B, x_0)$. Since $j_*$ is surjective for $m=n$, there exists a map $f':(D^n, S^{n-1},s_0)\to (X,A,x_0)$ such that $j_*[f']=[f]$ in $\pi_n(Y,B,x_0)$. Note that we can identify $D^n$ with $E^n$ and thus we can view $f'$ as a map $f':(E^n,\partial_-E^n,\partial_+E^n)\to (X,A,x_0)$. Observe that $D^n\times I$ is a join of $D^n\times \{0,1\}\cup \partial_+D^n\times I$ and $E^n$. The homotopy between $f'$ and $f$, relative the appropriate boundaries, can thus be realized on the straight lines making the join. This homotopy is therefore an extension of $f$ over $D^n\times I$ taking $(E^n, \partial_-E^n)$ to $(X, A)$ and the rest of $\partial_-D^n\times I$ to $B$. I think this works. Compose this extended $f$ with a deformation of $D^n\times I$ pushing $E^n$ into $D^n\times \{1\}$. This gives the desired result.
\end{proof}

Finally, we prove \cref{theorem:n_equivalences}, which we reprinted below for convenience.

\begin{Theorem}[\cref{theorem:n_equivalences}]
    Let $f\colon (X;X_1,X_2)\to (Y;Y_1,Y_2)$ be a map of excisive triads in $\cat{PsTop}$ such that $f\colon (X_i,X_1\cap X_2)\to (Y_i,Y_1\cap Y_2)$ is an $n$-equivalence for $i=1,2$. Then $f\colon (X,X_i)\to (Y,Y_i)$ is an $n$-equivalence for $i=1,2$. 
\end{Theorem}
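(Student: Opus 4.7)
The plan is to follow Hatcher's proof of \cite[Proposition 4K.1]{hatcher2002algebraictopology} as closely as possible, but taking care with the differences between $\cat{Top}$ and $\cat{PsTop}$ at the two places where they appear: the construction of an excisive mapping cylinder triad, and the compactness/subdivision argument on $D^n\times I$. By symmetry, we prove the statement for $i=1$.

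First I would replace the map $f$ by an inclusion. Using \cref{prop: mapping cylinder} we factor $f$ as a composition of an inclusion $\tilde f\colon X\hookrightarrow M_f$ followed by a deformation retraction $r\colon M_f\to Y$, and using \cref{lemma:excisive_triad_mapping_cylinder} we obtain an excisive triad $(M_f;M_1,M_2)$ such that $\tilde f$ gives a map of excisive triads $(X;X_1,X_2)\to (M_f;M_1,M_2)$, with $M_i$ deformation retracting onto $Y_i$ in $M_f$. Since homotopy equivalences induce isomorphisms on all $\pi_n$ and $n$-equivalences are preserved under homotopy equivalences of pairs, it suffices to show that the inclusion $(X,X_1)\hookrightarrow (M_f, M_1)$ is an $n$-equivalence. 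So from now on I may assume $X\subset Y$, $X_i\subset Y_i$, that $(Y;Y_1,Y_2)$ is excisive, and that $f\colon (X_i,X_1\cap X_2)\hookrightarrow (Y_i,Y_1\cap Y_2)$ is an $n$-equivalence for $i=1,2$.

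Next I would apply \cref{lemma:hatcher_lemma} to the inclusion $(X,X_1)\hookrightarrow (Y,Y_1)$: to verify that it is an $n$-equivalence (i.e.\ surjective on $\pi_m$ for $m=n$ and injective for $m<n$), it is enough to verify condition (3) of that lemma for every $m\le n$. So suppose we are given a map
\[
F\colon (D^m\times\{0\}\cup \partial_+D^m\times I,\partial_- D^m\times\{0\}\cup S^{m-2}\times I)\to (Y,Y_1),
\]
with the restriction to $\partial_+D^m\times I$ independent of the $I$-coordinate, and with $(\partial_+D^m\times\{1\},S^{m-2}\times\{1\})$ sent to $(X,X_1)$. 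The task is to extend $F$ to a map $(D^m\times I,\partial_-D^m\times I)\to (Y,Y_1)$ whose restriction to $(D^m\times\{1\},\partial_-D^m\times\{1\})$ lands in $(X,X_1)$.

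Here I would subdivide $D^m\times I$ into a cubical grid so that each small cube has image under $F$ (on the part of $D^m\times I$ where $F$ is already defined) contained in $Y_1$ or $Y_2$, and so that the relevant faces of each cube land in the correct subsets. This is where $\cat{PsTop}$ requires care: since $\{Y_1,Y_2\}$ is only an interior cover and $D^m\times I$ is a compact topological space, I invoke \cref{lemma:finite_cover_whose_image_is_contained_in_covering_system} to obtain a finite open cover of (the portion of) $D^m\times I$ by sets whose images under the already-defined part of $F$ each lie in $Y_1$ or $Y_2$; a standard Lebesgue-number argument on the topological cube then produces a fine enough cubical subdivision. After such a subdivision I would extend $F$ cube-by-cube in order of increasing dimension, using at each step the hypothesis that $f\colon (X_i,X_1\cap X_2)\to (Y_i,Y_1\cap Y_2)$ is an $n$-equivalence to solve the corresponding extension problem (this is again condition (3) of \cref{lemma:hatcher_lemma} applied to those pairs). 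Continuity of the glued extension is guaranteed by \cref{prop:pasting_lemma,prop:finite_cw_complexes_are_topological}, since the small cubes form a closed cover of a topological CW space.

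The main obstacle is the subdivision/compactness step: in $\cat{PsTop}$ the only handle we have on the cover $\{Y_1,Y_2\}$ is that it is an interior cover, so a naive ``take preimages and apply Lebesgue's lemma'' does not apply directly. This is precisely what \cref{lemma:finite_cover_whose_image_is_contained_in_covering_system} is designed for: because $D^m\times I$ is a compact topological space, the continuous image considerations translate into a genuine finite open cover of $D^m\times I$ refining the pull-back cover, which then admits a Lebesgue number. Once this step is handled, the rest of the argument is a verbatim dimension-by-dimension induction as in Hatcher, with group operations and homotopies of maps of cubes carried out exactly as in the proof of \cref{theorem:homotopy_exact_sequence}, and the cofibration/HEP-style extensions justified by \cref{corollary:CW_pairs_HEP}.
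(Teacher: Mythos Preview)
Your proposal is correct and follows essentially the same route as the paper's proof: reduce to an inclusion via the mapping cylinder excisive triad of \cref{lemma:excisive_triad_mapping_cylinder}, translate the $n$-equivalence statement into the extension condition of \cref{lemma:hatcher_lemma}, subdivide the cube using \cref{lemma:finite_cover_whose_image_is_contained_in_covering_system} plus a Lebesgue number, and extend cube-by-cube. The only superfluous ingredient is your appeal to \cref{corollary:CW_pairs_HEP} at the end---the per-cube extensions come directly from condition (2)/(3) of \cref{lemma:hatcher_lemma} applied to the inclusions $(X_i,X_1\cap X_2)\hookrightarrow (Y_i,Y_1\cap Y_2)$, not from any HEP argument.
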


\begin{proof}
    Replacing $Y$ by the mapping cylinder of $f$ with its induced decomposition as an excisive triad, as in \cref{lemma:excisive_triad_mapping_cylinder}, we may assume without loss of generality that $f$ is an inclusion. Recall the conditions (1), (2) and (3) from \cref{lemma:hatcher_lemma}. By \cref{lemma:hatcher_lemma} it suffices to show that condition (2) for the inclusions $(X_1, X_1 \cap x_2)\hookrightarrow (Y_1, Y_1 \cap Y_2)$ and $(X_2, X_1 \cap X_2)\hookrightarrow (Y_2, Y_1 \cap Y_2)$ implies (3) for the inclusion $(X, X_1)\hookrightarrow (Y,Y_1)$. Suppose $f:D^n\times \{0\}\cup\partial_+D^n\times I\to Y$ is a map satisfying the hypothesis in condition (3). For what follows it will be convenient to identify $D^n$ with $I^n$. The identification is such that $\partial_- D^n$ corresponds to the face $I^{n-1}\times \{1\}$, which we denote by $\partial_-I^n$, and $\partial_+D^n$ corresponds to the remaining faces of $I^n$, which we denote by $\partial_+ I^n$. Under these correspondences, we have a map $f$ on $I^n\times \{0\}$ taking $\partial_+ I^n\times \{0\}$ to $X$ and $\partial_-I^n\times \{0\}$ to $Y_1$ , and on $\partial_+I^n\times I$ we have the constant homotopy.
    Since $(Y; Y_1, Y_2)$ is an excisive triad, we can subdivide each of the $I$ factors of $I^n\times \{0\}$ into subintervals so that $f$ takes each of the resulting $n$ dimensional subcubes of $I^n\times \{0\}$ into either $Y_1$ or $Y_2$. This is possible to do by \cref{lemma:finite_cover_whose_image_is_contained_in_covering_system} and the Lebesgue number lemma. The extension of $f$ we construct will have the following property:

    (*) If $K$ is a one of the subcubes of $I^n\times \{0\}$, or a lower dimensional face of such a  
    
    \quad cube, then the extension of $f$ takes $(K\times I, K\times \{1\})$ to $(Y_1, X_1)$ or $(Y_2, X_2)$ whenever 
    
    \quad $f$ takes $K$ to $Y_1$ or $Y_2$, respectively.

    Initially $f$ was defined defined on $\partial_+I^n\times I$ with image in $X$, independent of the $I$ coordinate, and we may assume the condition (*) holds here since we may assume that $X_1 = X \cap Y_1$ and $X_2 = X \cap Y_2$, these conditions holding for the mapping cylinder construction described in \cref{lemma:excisive_triad_mapping_cylinder}. Suppose we now want to extend $f$ over $K\times I$ for $K$ one of the subcubes. We can assume that $f$ has already been extended to $\partial_+ K\times I$ so that (*) is satisfied, by induction on $n$ and on the sequence of subintervals of the last coordinate of $I^n\times \{0\}$. To extend $f$ over $K\times I$, let us first deal with the cases that the given $f$ takes $(K, \partial_- K)$ to $(Y_1,Y_1\cap Y_2)$ or $(Y_2,Y_1\cap Y_2)$. Then by (2) for the inclusion $(X_1, X_1\cap X_2)\hookrightarrow (Y_1, Y_1 \cap Y_2)$ or $(X_2,X_1\cap X_2)\hookrightarrow (Y_2, Y_1 \cap Y_2)$ we may extend $f$ over $K\times I$ so that (*) is still satisfied. If neither of these two cases applies, then the given $f$ takes $(K, \partial_-K)$ just to $(Y_1, Y_1)$ or $(Y_2, Y_2)$, and we can apply (2) to construct the desired extension of $f$ over $K\times I$. 
\end{proof}

\subsection*{Acknowledgments}
This material is based upon work supported by the NSF under Grant No. DMS-1929284 while the first author was in residence at the Institute for Computational and Experimental Research in Mathematics in Providence, RI, during the Math+Neuroscience: Strengthening the Interplay Between Theory and Mathematics, Fall 2023 program. The second author was supported by an AMS-Simons Research Enhancement Grant for PUI Faculty. 
The authors would like to thank Antonio Rieser for helpful discussions and suggestions. This project was initiated during the workshop ``Discrete and combinatorial homotopy theory" at the American Institute for Mathematics, and we thank AIM for the generous support. 
\printbibliography
\end{document}